\providecommand{\U}[1]{\protect\rule{.1in}{.1in}}
\newtheorem{theorem}{Theorem}[section]
\newtheorem{lemma}[theorem]{Lemma}
\newtheorem{corollary}[theorem]{Corollary}
\newcounter{remarknum}
\newenvironment{remark}{\addvspace{12pt}\refstepcounter{remarknum}
\noindent{\bf Remark \arabic{remarknum}.}}{\par\addvspace{12pt}}
\newenvironment{proof}{\addvspace{12pt}\noindent{\bf Proof:}}{
$\Box$\par\addvspace{12pt}}
\newcounter{examplenum}
\newenvironment{example}{\addvspace{12pt}\refstepcounter{examplenum}
\noindent{\bf Example \arabic{examplenum}.}}{\par\addvspace{12pt}}
\begin{document}

\title{ Non-cocompact Group Actions and $\pi_{1} $-Semistability at Infinity}
\author{Ross Geoghegan, Craig Guilbault\footnote{This research was supported in part by Simons Foundation Grants 207264 $\&$ 427244, CRG} 
 \hspace{.01in} and Michael Mihalik}
\date{\today}
\maketitle

\begin{abstract}
A finitely presented 1-ended group $G$ has {\it semistable fundamental group at infinity} if $G$ acts geometrically on a simply connected and locally compact ANR $Y$ having the property that any two proper rays in $Y$ are properly homotopic. This property of $Y$ captures a notion of connectivity at infinity stronger than ``1-ended", and is in fact a feature of $G$, being independent of choices. It is a fundamental property in the homotopical study of finitely presented groups. While many important classes of groups have been shown to have semistable fundamental group at infinity, the question of whether every $G$ has this property has been a recognized open question for nearly forty years. In this paper we attack the problem by considering a proper {\it but non-cocompact} action of a group $J$ on such an $Y$. This $J$ would typically be a subgroup of infinite index in the geometrically acting over-group $G$; for example $J$ might be infinite cyclic or some other subgroup whose semistability properties are known. We divide the semistability property of $G$ into a $J$-part and a ``perpendicular to $J$" part, and we analyze how these two parts fit together. Among other things, this analysis leads to a proof (in a companion paper \cite{M7}) that a class of groups previously considered to be likely counter examples do in fact have the semistability property.
\end{abstract}

\section{Introduction}

\label{Intro} In this paper we consider a new approach to the semistability
problem for finitely presented groups. This is a problem at the intersection
of group theory and topology. It has been solved for many classes of finitely
presented groups, for example \cite{BM91},\cite{Bo04}, \cite{GG12}, \cite{GM96},
\cite{LR}, \cite{M1}, \cite{M86}, \cite{M87}, \cite{MT1992}, \cite{MT92},
\cite{M6} - but not in general. We begin by stating

\medskip

\noindent\textbf{The Problem.} Consider a finitely presented infinite group
$G$ acting cocompactly by cell-permuting covering transformations on a 1-ended, simply
connected, locally finite $CW$ complex $Y$. Pick an expanding sequence
$\{C_{n}\}$ of compact subsets with $\operatorname*{int}C_{n}\subseteq
C_{n+1}$ and $\cup C_{n}=Y$, then choose a proper \textquotedblleft base
ray\textquotedblright\ $\omega:[0,\infty)\rightarrow Y$ with the property that
$\omega([n,n+1])$ lies in $Y-C_{n}$. Consider the inverse sequence
\begin{equation}
{\pi}_{1}(Y-C_{0},\omega (0))\overset{\lambda_{1}}{\longleftarrow}{\pi}_{1}%
(Y-C_{1}, \omega (1))\overset{\lambda_{2}}{\longleftarrow}{\pi}_{1}(Y-C_{2},\omega (3)%
)\overset{\lambda_{2}}{\longleftarrow}\cdots\label{representative of pro-pi1}%
\end{equation}
where the $\lambda_{i}$ are defined using subsegments of $\omega$. The Problem
is: EITHER to prove that this inverse sequence is always semistable, i.e. is
pro-isomorphic to a sequence with epimorphic bonding maps, OR to find a group
$G$ for which that statement is false. This problem is known to be independent
of the choice of $Y$, $\left\{  C_{n}\right\}  $, and $\omega$, and it is
equivalent to some more geometrical versions of semistability which we now recall.

A 1-ended, locally finite $CW$ complex $Y$, with proper base ray $\omega$,
\textit{has semistable fundamental group at $\infty$} if any of the following
equivalent conditions holds:

\begin{enumerate}
\item Sequence (\ref{representative of pro-pi1}) is pro-isomorphic to an
inverse sequence of surjections.

\item Given $n$ there exists $m$ such that, for any $q$, any loop in $Y-C_{m}
$ based at a point $\omega(t)$ can be homotoped in $Y-C_{n}$, with base point
traveling along $\omega$, to a loop in $Y-C_{q}$.

\item Any two proper rays in $Y$ are properly homotopic.
\end{enumerate}

Just as a basepoint is needed to define the fundamental group of a space, a base ray is needed to
define the fundamental pro-group at $\infty$. And just as a path between two basepoints defines
an isomorphism between the two fundamental groups, a proper homotopy between two base rays
defines a pro-isomorphism between the two fundamental pro-groups at $\infty$. In the absence of
such a proper homotopy it can happen that the two pro-groups are not pro-isomorphic (see
\cite{G}, Example 16.2.4.) Thus, in the case of $G$ acting cocompactly by covering
transformations as above, semistability is necessary and sufficient for the \textquotedblleft
fundamental pro-group at infinity of $G$\textquotedblright\ to be well-defined up to
pro-isomorphism.
\medskip

\noindent\textbf{The approach presented here.} In its simplest form our
approach is to restrict attention to the sub-action on $Y$ of an infinite finitely
generated subgroup $J$ having infinite index in $G$. We separate the
topology of $Y$ at infinity into 
\textquotedblleft the $J$-directions\textquotedblright\ and 
\textquotedblleft the directions in $Y$ orthogonal to $J$\textquotedblright, with the main result
being that, having
appropriate analogs of semistability in the two directions, implies that
$Y$ has semistable fundamental group at $\infty$.

For the purposes of an introduction, we first describe a special case of the
Main Theorem and give a few examples. A more far-reaching, but more technical,
version of the Main Theorem is given in Section \ref{coax}.
\medskip

Suppose $J$ is a finitely generated group acting by cell-permuting covering
transformations on a 1-ended locally finite and simply connected CW
complex $Y$. Let $\Gamma\left(  J,J^{0}\right)  $ be the Cayley graph of $J$ with
respect to a finite generating set $J^{0}$ and let $m:\Gamma\rightarrow Y$ be
a $J$-equivariant map. Then\medskip

\begin{itemize}
\item[a)] $J$ is \emph{semistable at infinity in }$Y$ if for any compact set
$C\subseteq Y$ there is a compact set $D\subseteq Y$ such that if $r$ and $s$
are two proper rays based at the same point in $\Gamma\left(  J,J^{0}\right)
-m^{-1}\left(  D\right)  $ then $mr$ and $ms$ are properly homotopic in $Y-C$
relative to $mr\left(  0\right)  =ms\left(  0\right)  $.

Standard methods show that the above property does not depend on the choice of finite
generating set $J^{0}$.

\item[b)] $J$ is \emph{co-semistable at infinity in }$Y$ if for any compact
set $C\subseteq Y$ there is a compact set $D\subseteq Y$ such that for any
proper ray $r$ in $Y-J\cdot D$ and any loop $\alpha$ based at $r(0)$ whose image lies in
$Y-D$, $\alpha$ can be pushed to infinity in $Y-C$ by a proper homotopy with the base point
tracking  $r$.
\medskip
\end{itemize}

\begin{theorem}
[Main Theorem---a special case]\label{Theorem: Special case of Main Theorem}If
$J$ is both semistable at infinity in $Y$ and co-semistable at infinity in
$Y$, then $Y$ has semistable fundamental group at infinity.\medskip
\end{theorem}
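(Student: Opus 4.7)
The plan is to verify characterization (2) of semistable $\pi_1$ at infinity for $Y$: given a compactum $C = C_n$, produce $C_m \supseteq C$ such that every loop $\alpha$ in $Y - C_m$ based at $\omega(t)$ is properly homotopic in $Y - C$, with basepoint tracking $\omega$, to loops arbitrarily far out in $Y$. I first calibrate both hypotheses against $C$. Co-semistability of $J$ in $Y$ applied to $C$ yields a compactum $D_0 \supseteq C$ so that loops in $Y - D_0$ based on proper rays in $Y - J\cdot D_0$ are properly homotopic to infinity in $Y - C$. Semistability of $J$ in $Y$ applied to $C$ yields $D_1 \supseteq D_0$ so that proper rays in $\Gamma(J, J^0) - m^{-1}(D_1)$ starting at a common vertex have $m$-images that are properly homotopic in $Y - C$. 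I then take $C_m$ to be a sufficiently large compactum containing $D_1$.

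Given a loop $\alpha$ in $Y - C_m$ based at $\omega(t)$, I anchor the basepoint to the $J$-skeleton by connecting $\omega(t)$ to a vertex $v = m(g)$ via a short path $\tau$ in $Y - C$, and replace $\alpha$ by the conjugate $\tau^{-1}\alpha\tau$ based at $v$. This conjugated loop is then slid outward along the $m$-image of a proper Cayley graph ray $\rho$ emanating from $g$: along each edge of $\rho$ the loop is carried by the corresponding element of $J$ (using that $J$ acts on $Y$ by covering transformations), and semistability of $J$ in $Y$ ensures that the track of the basepoint through $Y$ is well-defined up to proper homotopy in $Y - C$. Pieces of $\alpha$ that lie in $Y - J\cdot D_0$ are pushed outward by co-semistability, while pieces near $J\cdot D_0$ are carried by the $J$-translations above; decomposing $\alpha$ according to where it sits relative to $J\cdot D_0$ allows the two mechanisms to be applied on complementary arcs and patched into a single proper homotopy in $Y - C$. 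Finally I undo the conjugation by $\tau$, using that the pro-isomorphism class of the fundamental pro-group at infinity is independent of base-ray choice, to bring the basepoint back onto $\omega$.

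The principal obstacle is coordinating these two mechanisms on a loop that typically weaves between $J\cdot D_0$ and its complement: since $J$ is not assumed to act cocompactly, $J\cdot D_0$ need not come near to exhausting $Y$, and both the homotopy of $\alpha$ and the tracking ray for the basepoint must cross the boundary of $J\cdot D_0$ in a controlled way. A secondary technical difficulty is that $\omega$ itself may re-enter $J\cdot D_0$ infinitely often, so the proper ray required to invoke co-semistability cannot simply be taken as a tail of $\omega$; one likely needs to fabricate such a ray from a concatenation of subrays of $\omega$ and of $m$-images of Cayley graph paths that stay outside $J\cdot D_0$, while remaining properly homotopic to $\omega$ in $Y - C$. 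Dovetailing these pieces so that the co-semistability and semistability hypotheses apply on complementary parts, with matching basepoint tracks, is where the bulk of the technical work will lie.
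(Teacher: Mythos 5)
Your high-level skeleton matches the paper's: calibrate both hypotheses against the target compactum, decompose the loop $\alpha$ into arcs lying in $J\cdot C$ and arcs lying in components of $Y-J\cdot C$, push the former using semistability of $J$ in $Y$ and the latter using co-semistability, and patch. But the two places where you defer to "the bulk of the technical work" are exactly where the substance of the theorem lives, and one of your proposed mechanisms is not valid. First, to invoke co-semistability on an excursion $\beta_i$ of $\alpha$ into a $J$-unbounded component $U$ of $Y-J\cdot C$, you need a proper \emph{$J$-bounded} ray in $U$ starting at (or ladder-homotopic to a ray at) the endpoint of the transition edge into $U$; the definition gives you nothing for free here. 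The paper manufactures these rays by proving that the $J$-stabilizer of $U$ is infinite (Lemma \ref{trans}) and finitely generated (Theorem \ref{Craig}, via a van Kampen argument on $J\backslash Y$), and by establishing a bijection between ends of that stabilizer and $J$-bounded ends of $U$ (Lemma \ref{Jends}); none of this appears in your outline. Second, even once you have rays $s_i$ and $\tilde s_i$ in $U$ bracketing $\beta_i$, closing up the homotopy outside an arbitrary compactum $F$ requires a connecting path in $U-F$, i.e.\ it requires $s_i$ and $\tilde s_i$ to converge to the \emph{same end} of $U$. This is false for arbitrary choices, and arranging it is what the paper identifies as its two most important ideas (Lemmas \ref{far} and \ref{last}, involving the finitely many "separable" pairs of unbounded components of $\Lambda(S_i,S_i^0)-m_{(g,i)}^{-1}(St^{M}(C))$ and the enlargement of the excluded compact set $E$ accordingly). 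Your proposal does not mention this obstruction.

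Separately, the step in which the conjugated loop is "carried by the corresponding element of $J$" along each edge of a Cayley ray is not a homotopy: applying a covering transformation $j$ to a loop produces a different loop $j\alpha$ with no canonical homotopy to $\alpha$, let alone one avoiding $C$ and tracking a base ray. The correct device for the arcs $\alpha_i\subset J\cdot C$ is the semistability-of-$J$-in-$Y$ hypothesis applied directly (the paper's Lemma \ref{easy}): the $\alpha_i$ are uniformly close to $z$-images of Cayley-graph paths, and the hypothesis supplies the far-out connecting paths $\delta_i$ together with null-homotopies in $Y-C_0$ of the resulting rectangles. As written, your argument has genuine gaps at the construction of the tracking rays, at the same-end problem, and in the translation mechanism.
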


\begin{remark}
\label{Remark in Intro}

\begin{enumerate}
\item  To our knowledge, the theorems
proved here are the first non-obvious results that imply semistable
fundamental group at $\infty$ for a space $Y$ which might not admit a
cocompact action by covering transformations.

\item \label{Intro remark 2}In the special case where $J$ is an infinite
cyclic group, condition $(a)$ above is always satisfied since $\Gamma\left(
J,J^{0}\right)  $ can be chosen to be homeomorphic to $%
\mathbb{R};
$ any two proper rays in $%
\mathbb{R}
$ which begin at the same point and lie outside a nonempty compact subset of $%
\mathbb{R}
$ are properly homotopic in their own images. Moreover, since condition
$\left(  b\right)  $ is implied by the main hypothesis of \cite{GG12} (via
\cite[Lemma 3.1]{W92} or \cite[Th.16.3.4]{G}), Theorem
\ref{Theorem: Special case of Main Theorem} implies the main theorem of
\cite{GG12} .

\item The converse of Theorem \ref{Theorem: Special case of Main Theorem} is
trivial. If $Y$ is semistable at infinity and $J$ is any finitely generated
group acting as covering transformations on $Y$, it follows directly from the
definitions that $J$ is both semistable at infinity in $Y$ and co-semistable at
infinity in $Y$. So, our theorem effectively reduces checking the semistability
of the fundamental group at infinity of a space to separately checking two
strictly weaker conditions.

\item \label{Intro remark 4}In our more general version of Theorem
\ref{Theorem: Special case of Main Theorem} (not yet stated), the group $J$
will be permitted to vary for different choices of compact set $C$. No over-group containing these various groups is needed unless we want to extend our results to locally compact ANRs. That
issue is discussed in Corollary \ref{MC}.

\end{enumerate}
\end{remark}

\noindent\textbf{Some examples. }We now give four illuminating examples.
Admittedly, the conclusion of Theorem \ref{Theorem: Special case of Main Theorem} is known
by previous methods in the first three of these, but they are included because they nicely illustrate
how the semistability and co-semistability hypotheses lead to the semistability conclusion of the
Theorem. Moreover an understanding of these examples helps to motivate later proofs. In the
case of the fourth example the conclusion was not previously known.

\begin{example}
\label{Example: BS(1,2)}Let $G$ be the Baumslag-Solitar group $B\left(
1,2\right)  =\left\langle a,t\mid t^{-1}at=a^{2}\right\rangle $ acting by
covering transformations on $Y=T\times%
\mathbb{R}
$, where $T$ is the Bass-Serre tree corresponding to the standard graph of
groups representation of $G$, and let $J=\left\langle a\right\rangle \cong%
\mathbb{Z}
$. Then $J$ is semistable at infinity in $Y$ for the reasons described in
Remark \ref{Remark in Intro}(\ref{Intro remark 2}) above. To see that $J$ is
co-semistable at infinity in $Y$, choose $D\subseteq Y$ to be of the form
$T_{0}\times\left[  -n,n\right]  $, where $n\geq1$ and $T_{0}$ is a finite
subtree containing the \textquotedblleft origin\textquotedblright\ $0$ of $T$.
Then each component of $Y-J\cdot D$ is simply connected (it is a subtree
crossed with $%
\mathbb{R}
$). So pushing $\alpha$ to infinity along $r$ can be accomplished by first
contracting $\alpha$ to its basepoint, then sliding that basepoint along $r$
to infinity.
\end{example}

\vspace{.4in} \vbox to 2in{\vspace {-.3in} \hspace {.2in}
\hspace{.1 in}
\includegraphics[scale=.4]{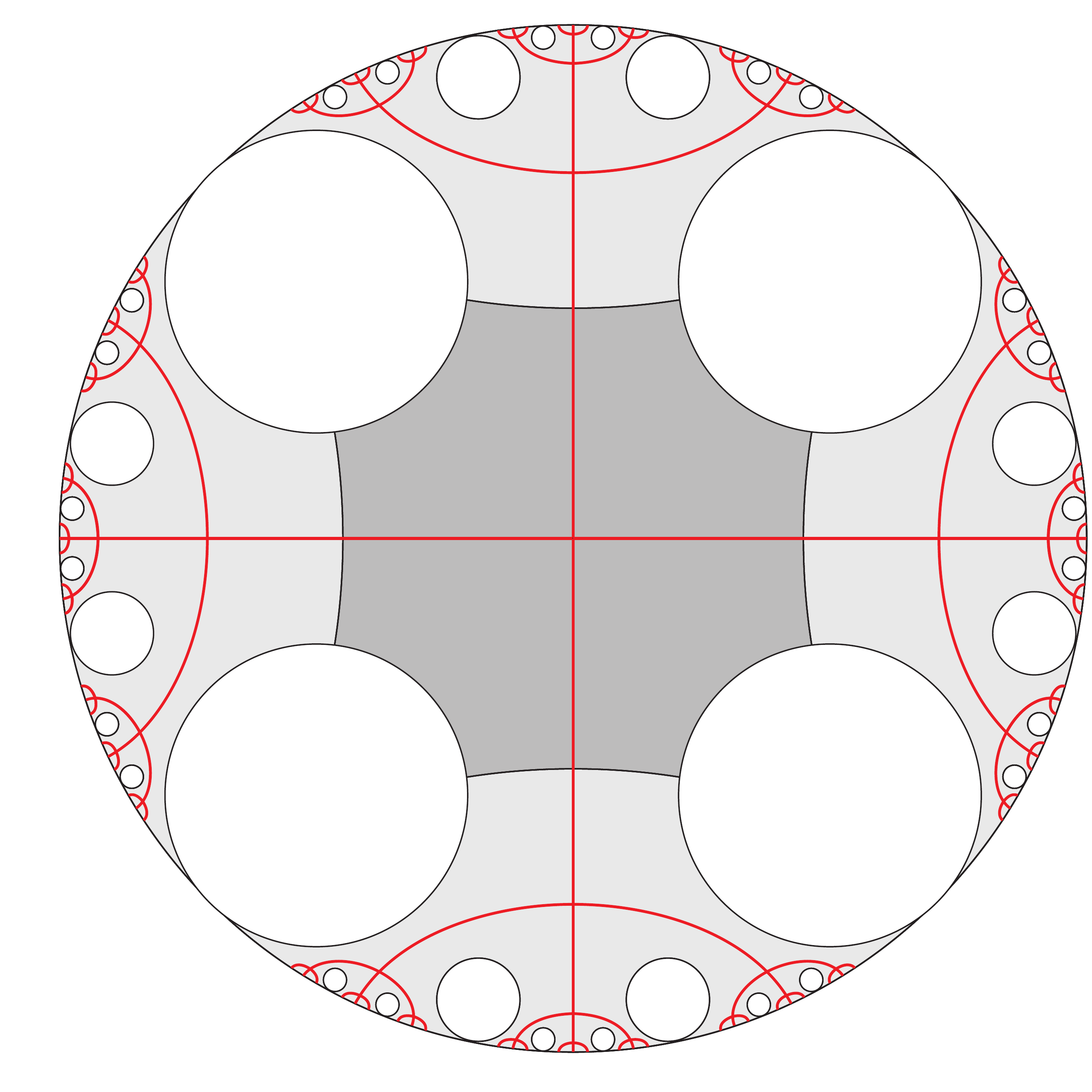}
\vss }

\vspace{2.3in}

\centerline{Figure 1}

\medskip

\begin{example}
\label{Example: F_2}Let $J=\left\langle a,b\mid\right\rangle $ be the
fundamental group of a punctured torus of constant curvature $-1$ and consider
the corresponding action of $J$ on $Y=\mathbb{H}^{2}$. Figure 1 shows
$\mathbb{H}^{2}$ with an embedded tree representing the image of a well-chosen
$m:\Gamma\left(  J,\left\{  a,b\right\}  \right)  \rightarrow\mathbb{H}^{2}$.
The shaded region represents a typical $J\cdot D$ for a carefully chosen
compact $D\subseteq\mathbb{H}^{2}$, which is represented by the darker
shading. The components of $\mathbb{H}^{2}-J\cdot D$ are open horoballs.
Notice that two proper rays in $\Gamma\left(  J,\left\{  a,b\right\}  \right)
-m^{-1}\left(  D\right)  $, which begin at the same point, are not necessarily
properly homotopic in $\Gamma\left(  J,\left\{  a,b\right\}  \right)
-m^{-1}\left(  D\right)  $, but their images are properly homotopic in
$\mathbb{H}^{2}-D$; so $J$ is semistable at infinity in $\mathbb{H}^{2}$.
Moreover, since each component of $\mathbb{H}^{2}-J\cdot D$ is simply
connected, $J$ is co-semistable at infinity in $\mathbb{H}^{2}$ for the same
reason as in Example \ref{Example: BS(1,2)}.
\end{example}

\begin{example}
\label{Example: Figure-eight}Let $K\subseteq S^{3}$ be a figure-eight knot;
endow $S^{3}-K$ with a hyperbolic metric; and consider the corresponding
proper action of the knot group $J$ on $\widetilde{S^{3}-K}=\mathbb{H}^{3}$.
Much like the previous example, there exists a nice geometric embedding of a
Cayley graph of $J$ into $\mathbb{H}^{3}$ and choices of compact
$D\subseteq\mathbb{H}^{3}$ so that $\mathbb{H}^{3}-J\cdot D$ is an infinite
collection of (3-dimensional) open horoballs. Since $J$ itself is known to be
$1$-ended with semistable fundamental group at infinity (a useful case to keep
in mind), the first condition of Theorem
\ref{Theorem: Special case of Main Theorem} is immediate. And again,
co-semistability at infinity follows from the simple connectivity of the
horoballs.\medskip
\end{example}

\begin{example}
\label{Ascending HNN extension}For many years an outstanding class of finitely presented
groups not known to be semistable at $\infty$ has been
the class of finitely presented ascending HNN extensions whose base groups
are finitely generated but not finitely presented\footnote{The case of
finitely presented base group was settled long ago in \cite{HNN1}.}. While
Theorem \ref{MT} does not establish semistability for this whole class, it
does so for a significant subclass --- those of \textquotedblleft finite
depth\textquotedblright. This new result is established in \cite{M7}, a paper which makes use of
the more technical Main Theorem \ref{MT} proved here. In particular, allowing the group $J$ to
vary (see Remark \ref{Remark in Intro}(\ref{Intro remark 4})) is important in this example.
\medskip
\end{example}

\noindent\textbf{Outline of the paper.} The paper is
organized as follows. We consider 1-ended simply connected locally finite
CW complexes $Y$, and groups $J$ that act on $Y$ as covering transformations.
In $\S $\ref{sss} we review a number of equivalent definitions for a space and
group to have semistable fundamental group at $\infty$. In $\S $\ref{coax} we state our Main Theorem \ref{MT} in full generality and 
formally introduce the two somewhat orthogonal notions in the hypotheses of
Theorem \ref{MT}. The first is that of a finitely generated group $J$ being
semistable at $\infty$ in $Y$ with respect to a compact set $C$, and the
second defines what it means for $J$ to be co-semistable at $\infty$ in $Y$
with respect to $C$. In $\S $\ref{out} we give a geometrical outline and
overview of the proof of the main theorem. In $\S $\ref{stabil} we prove a
number of foundational results. Suppose $C$ is a compact subset of $Y$ and $J$
is a finitely generated group acting as covering transformations on $Y$.
Define $J\cdot C$ to be $\cup_{j\in J}j(C)$. We consider components $U$ of
$Y-J\cdot C$ such that the image of $U$ in $J\backslash Y$ is not contained in
a compact set. We call such $U$, $J$-unbounded. We show there are only
finitely many $J$-unbounded components of $Y-J\cdot C$, up to translation in
$J$ and the $J$-stabilizer of a $J$-unbounded component is an infinite group.
In $\S $\ref{SVK} we use van Kampen's Theorem to show that for a finite
subcomplex $C$ of $Y$, the $J$-stabilizer of a $J$-unbounded component of
$Y-J\cdot C$ is a finitely generated group. A bijection between the ends of
the stabilizer of a $J$-unbounded component of $Y-J\cdot C$ and
\textquotedblleft$J$-bounded ends" of that component is produced in
$\S $\ref{bij}. The constants that arise in our bijection are shown to be
$J$-equivariant. In $\S $\ref{proof} we prove our main theorem. A generalization of our main theorem from CW complexes to absolute neighborhood retracts is proved in $\S $\ref{ANR}.

\section{Equivalent definitions of semistability}\label{sss}

Some equivalent forms of
semistability have been stated in the Introduction. It will be convenient to have the following:

\begin{theorem}
\label{ssequiv} (see Theorem 3.2\cite{CM2}) With $Y$ as before, the following are equivalent:
\begin{enumerate}
\item $Y$ has semistable fundamental group at $\infty$.
\item Let $r:[0,\infty)\to Y$ be a proper base ray. Then for any compact
set $C$ there is a compact set $D$ such that for any third compact set $E$
and loop $\alpha$ based at $r(0)$ whose image lies in $Y-D$, $\alpha$ is homotopic
to a loop in $Y-E$, by a homotopy with image in $Y-C$, where $\alpha $ tracks $r$.
\item For any compact set $C$ there is a compact set $D$ such that if $r$ and
$s$ are proper rays based at $v$ and with image in $Y-D$, then $r$ and $s $
are properly homotopic $rel\{v\}$ by a proper homotopy supported in $Y-C$.
\item If $C$ is compact in $Y$ there is a compact set $D$ in $Y$ such that for
any third compact set $E$ and proper rays $r$ and $s$ based at a vertex $v$ and with
image in $Y-D$, there is a path $\alpha$ in $Y-E$ connecting points of $r$ and
$s$ such that the loop determined by $\alpha$ and the initial segments of $r$
and $s$ is homotopically trivial in $Y-C$.
\end{enumerate}
\end{theorem}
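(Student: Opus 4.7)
Since $(1)\Leftrightarrow(3)$ is exactly the Introduction's ``proper-rays are properly homotopic'' characterisation of semistability, it suffices to relate (2) and (4) to (3); I would do this by proving $(3)\Rightarrow(4)\Rightarrow(3)$ and $(3)\Rightarrow(2)\Rightarrow(1)$. Throughout, the one-endedness and local finiteness of $Y$ will be used tacitly to enlarge any compact $D$ so that $Y-D$ has a single unbounded component containing any prescribed base point.

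For $(3)\Rightarrow(4)$, I would slice the proper homotopy: given rays $r,s$ from $v$ in $Y-D$, the proper homotopy $H\colon[0,\infty)\times[0,1]\to Y-C$ rel $v$ supplied by (3) avoids any prescribed compact $E$ on $[T,\infty)\times[0,1]$ for $T$ large (by properness), so $\alpha(u)=H(T,u)$ connects $r(T)$ to $s(T)$ in $Y-E$ and $H|_{[0,T]\times[0,1]}$ witnesses the nullhomotopy of the triangle loop in $Y-C$. For $(3)\Rightarrow(2)$, I would convert a loop to a pair of rays: given a loop $\alpha$ in $Y-D$ at $r(0)$, pick any proper ray $\sigma$ from $r(0)$ with image in $Y-D$ (which exists after enlarging $D$ so that $r(0)$ lies in the unbounded component, and which may be arranged to follow $r$ beyond an initial segment). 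Then $\sigma$ and $\alpha\cdot\sigma$ are proper rays from $r(0)$ in $Y-D$, so (3) applies, and reading the resulting proper homotopy around the rectangle $[0,T]\times[0,1]$ for $T$ large yields a homotopy in $Y-C$ of $\alpha$ to a loop in $Y-E$ with base point tracking $r$.

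The main technical obstacle is $(4)\Rightarrow(3)$, where one must assemble local triangle-closures into a global proper homotopy. I would fix a compact exhaustion $E_1\subseteq E_2\subseteq\cdots$ with $\bigcup E_n=Y$ and apply (4) repeatedly to obtain connecting paths $\alpha_n\subseteq Y-E_n$ (from $r(t_n)$ to $s(t_n)$ with $t_n\to\infty$) together with nullhomotopies $F_n$ in $Y-C$ of the triangle loops $r|_{[0,t_n]}\cdot\alpha_n\cdot\overline{s|_{[0,t_n]}}$. The ``annular'' loop bounded by $\alpha_n$, $r|_{[t_n,t_{n+1}]}$, $\overline{\alpha_{n+1}}$, and $\overline{s|_{[t_n,t_{n+1}]}}$ is a product of two such triangle loops, hence nullhomotopic in $Y-C$; filling each annulus and plugging $F_1$ into the innermost triangle gives a map $[0,\infty)\times[0,1]\to Y-C$ whose properness follows from $\alpha_n\subseteq Y-E_n$ together with $E_n\nearrow Y$. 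The delicate bookkeeping is in parametrising the bands so they glue continuously along the $\alpha_n$, and this is the heart of the argument. Finally, $(2)\Rightarrow(1)$ is the standard Mittag-Leffler translation: once base-point transport along $\omega$ is incorporated, (2) applied to $r=\omega$ and the exhaustion $\{C_n\}$ of \eqref{representative of pro-pi1} says precisely that the images in $\pi_1(Y-C_0,\omega(0))$ of the bond compositions $\lambda_1\circ\cdots\circ\lambda_q$ stabilise for $q$ large, which is the Mittag-Leffler criterion for pro-isomorphism with an inverse sequence of epimorphisms.
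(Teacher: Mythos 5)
The paper itself gives essentially no proof here: it cites \cite{CM2} (Theorem 3.2) for the equivalence of (1)--(3) and declares (4) ``clearly equivalent'' to (3), so you are attempting considerably more than the authors do. Your easy directions are fine: $(3)\Rightarrow(4)$ by slicing the proper homotopy at a level $T$ beyond which it misses $E$, and $(2)\Rightarrow(1)$ via the Mittag--Leffler criterion. Your $(3)\Rightarrow(2)$ is morally right but leaves the basepoint travelling along your auxiliary ray $\sigma$ rather than along the given ray $r$; saying $\sigma$ ``may be arranged to follow $r$ beyond an initial segment'' acknowledges the discrepancy without resolving it (one still has to transport along the initial segment where they differ, inside $Y-C$).

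The genuine gap is in $(4)\Rightarrow(3)$, exactly the step you call the heart of the argument. You invoke hypothesis (4) only with the single fixed compact set $C$ (varying $E$), so every triangle loop --- and hence every annular loop --- is known to bound only in $Y-C$. When you ``fill each annulus,'' the filling discs are therefore constrained only to lie in $Y-C$: nothing forces the $n$-th filled band to avoid $E_n$, or any compact set larger than $C$. Knowing that the boundary curves $\alpha_n$ escape to infinity does not make the map $[0,\infty)\times[0,1]\to Y-C$ proper, since infinitely many of the filling discs could meet a fixed compact $K\supsetneq C$; and ``nullhomotopic in $Y-C$'' never upgrades to ``nullhomotopic in $Y-E_n$'' (if it did, you would be proving pro-triviality rather than semistability). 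The standard repair is a diagonal argument exploiting the $\forall C\,\exists D$ structure of (4) at every scale: build a nested exhaustion $C=F_0\subseteq F_1\subseteq\cdots$ with $F_{k+1}$ containing the $D$ supplied by (4) for the input $F_k$, choose $t_n$ so the tails of $r$ and $s$ beyond $t_n$ avoid that $D$, and apply (4) with $C:=F_n$ to the suitably re-based tails so that the $n$-th annular loop is filled inside $Y-F_n$. Only then do the bands march to infinity and the glued homotopy become proper. As written, your construction produces a homotopy in $Y-C$ that need not be proper, so it does not establish (3).
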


\begin{proof} That the first three conditions are equivalent is shown in Theorem 3.2 of
\cite{CM2}. Condition $4$ is clearly equivalent to  the more standard  Condition $3$.
\end{proof}

\section{The Main Theorm and its definitions}
\label{coax} We are now ready to state our main theorem in its general form.
After doing so, we will provide a detailed discussion of the definitions that
go into that theorem. Both the theorem and the definitions generalize those
found in the introduction.

\begin{theorem}
[Main Theorem]\label{MT} Let $Y$ be a 1-ended simply connected locally finite
CW complex. Assume that for each compact subset $C_{0}$ of $Y$ there is a
finitely generated group $J$ acting as cell preservering covering
transformations on $Y$, so that (a) $J$ is semistable at $\infty$ in $Y$ with
respect to $C_{0}$, and (b) $J$ is co-semistable at $\infty$ in $Y$ with
respect to $C_{0}$. Then $Y$ has semistable fundamental group at $\infty$.
\end{theorem}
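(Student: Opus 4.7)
The plan is to verify condition (2) of Theorem~\ref{ssequiv}. Given a compact $C \subseteq Y$, apply the hypothesis (after enlarging $C$ if convenient, so it contains the basepoint of our proper base ray together with a small buffer) to obtain a finitely generated group $J$ acting on $Y$ as covering transformations that is both semistable and co-semistable at $\infty$ in $Y$ with respect to $C$. Extract a common compact $D_{0} \supseteq C$ witnessing both conditions, and enlarge $D_{0}$ to a finite subcomplex $D$ on which the structural results of $\S$\ref{stabil}--$\S$\ref{bij} apply. For such a $D$, the $J$-unbounded components of $Y - J\cdot D$ fall into finitely many $J$-orbits, each with a finitely generated infinite stabilizer $H \leq J$, and the ends of $H$ correspond $J$-equivariantly to the ``$J$-bounded ends'' of the corresponding component. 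Observe that, since the hypothesis allows $J$ to vary with the compact set, choosing $J$ after $C$ is legitimate (cf.\ Remark~\ref{Remark in Intro}(\ref{Intro remark 4})).

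Now let $\alpha$ be a loop based at $r(0)$ lying in $Y - D$. Because $\alpha$ is compact, it meets only finitely many $J$-translates of $D$ and passes through only finitely many components of $Y - J\cdot D$. The idea is to decompose $\alpha$, and a tail of $r$, into finitely many consecutive subpaths, each lying either in $J\cdot D$ (``axial'' pieces) or in a single $J$-unbounded component $U$ of $Y - J\cdot D$ (``transverse'' pieces). After translating the axial subpaths, using the covering action, to paths in a $J$-equivariantly chosen image $m(\Gamma(J,J^{0})) \subseteq Y$ of the Cayley graph, they are handled by condition (a): each axial piece can be pushed to infinity through $m(\Gamma)$ while staying in $Y - C$. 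Each transverse piece lying in a component $U$ is handled by condition (b): choosing a proper ray in $U$ via the bijection of $\S$\ref{bij}, that piece can be pushed to infinity within $U$, also staying in $Y - C$. Given an arbitrary $E$, one can arrange each of these pushes to terminate in $Y - E$.

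The main obstacle is that these two pushes --- one inside $m(\Gamma)$, the other inside an individual component $U$ --- must be reconciled on their common endpoints (the finitely many points where $\alpha$ crosses $\partial(J\cdot D)$) in order to assemble into a single proper homotopy of $\alpha$ in $Y - C$ whose basepoint tracks $r$. This is where the $J$-equivariance of the constants produced by the bijection of $\S$\ref{bij} becomes essential: each end of the stabilizer $H$ of $U$ corresponds to a $J$-bounded end of $U$, and this correspondence is compatible with the way proper rays in $U$ shadow axial rays in $m(\Gamma)$. Thus the transverse pushes can ``hand off'' to the axial pushes without either leaving $Y - C$ or losing properness of the combined homotopy. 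Controlling the bookkeeping --- finitely many pieces, the choices of shadowing rays made $J$-equivariantly, and the simultaneous base-point tracking along $r$ --- is the technical heart of the argument. Once carried out, it produces, for the given $C$ and arbitrary $E$, a homotopy of $\alpha$ into $Y - E$ through $Y - C$, i.e.\ condition (2) of Theorem~\ref{ssequiv}, so $Y$ has semistable fundamental group at $\infty$.
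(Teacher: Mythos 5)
Your overall strategy matches the paper's: decompose $\alpha$ into subpaths lying alternately in $J\cdot C$ and in $J$-unbounded components of $Y-J\cdot C$, push the former to infinity using hypothesis (a) and the latter using hypothesis (b), and splice the homotopies together along shadowing rays. However, there is a genuine gap at exactly the point you describe as ``reconciling the two pushes.'' Hypothesis (b) only allows you to push a \emph{loop} in a component $U$ along a $J$-bounded proper ray in $U$. A transverse piece $\beta_i$ is a \emph{path} entering $U$ at one boundary point and exiting at another; to apply co-semistability (as in Lemma \ref{USS}) you must first close it up with a path in $U-F$ joining the two shadowing rays $s_i$ and $\tilde s_i$ that you attach at the entry and exit points. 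That closing-up path exists only if $s_i$ and $\tilde s_i$ converge to the \emph{same end} of $U$, and there is a priori no reason they should: the two crossing points of $\partial(J\cdot C)$ may correspond to different ends of the stabilizer of $U$ under the bijection of $\S$\ref{bij}. Invoking ``$J$-equivariance of the constants'' does not resolve this; the bijection is equivariant but says nothing about which ends the two attachment points land in.

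The paper devotes its two key lemmas to precisely this issue. Lemma \ref{far} handles all but finitely many translated components: when the two attachment vertices lie in the same unbounded component of $\Lambda(S_i,S_i^0)-m_{(g,i)}^{-1}(St^{M}(C))$ (which happens automatically once the coset $gS_i\ast$ misses $St^M(C)$), one can run both shadowing rays into that single component and build the required homotopy directly. For the finitely many exceptional components, Lemma \ref{last} enlarges the initial compact set $E$ using a separable/inseparable dichotomy on pairs of unbounded components of the stabilizer's Cayley graph minus a compact set: once $\beta_i$ is forced to lie outside $E$, the relevant pair must be inseparable, which supplies a cofinal family of connecting paths and lets one \emph{choose} $r_i'$ and $\tilde r_i'$ so that $s_i$ and $\tilde s_i$ do converge to the same end. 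Without an argument of this kind your assembly step does not go through, so the proposal as written is incomplete at its technical heart.
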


\begin{remark}
If there is a group $G$ (not necessarily finitely generated) acting as
covering transformations on $Y$ such that each of the groups $J$ of Theorem
\ref{MT} is isomorphic to a subgroup of $G$, then the condition that $Y$ is a
locally finite CW complex can be relaxed to: $Y$ is a locally compact absolute
neighborhood retract (ANR) (see Corollary \ref{MC}).
\end{remark}

The {\it distance} between vertices of a CW complex will always
be the number of edges in a shortest edge path connecting them. The space $Y$
is a 1-ended simply connected locally finite CW complex, and for each compact
subset $C_{0}$ of $Y$, $J(C_{0})$ is an infinite finitely generated group
acting as covering transformations on $Y$ and preserving some locally finite
cell structure on $Y$. Fix $\ast$ a base vertex in $Y$. Let $J^{0}$ be a
finite generating set for $J$ and $\Lambda(J,J^{0})$ be the Cayley graph of
$J$ with respect to $J^{0}$. Let $z_{(J,J^{0})}:(\Lambda(J,J^{0}%
),1)\rightarrow(Y,\ast)$ be a $J$-equivariant map so that each edge of
$\Lambda$ is mapped to an edge path of length $\leq K(J^{0})$. If $r$ is an
edge path in $\Lambda$, then $z(r)$ is called a $\Lambda$\textit{-path} in
$Y$. The vertices $J\ast$ are called $J$\textit{-vertices}.

If $C_{0}$ is a compact subset of $Y$ then the group $J$ is \textit{semistable
at $\infty$ in $Y$ with respect to $C_{0}$} if there exists a compact set $C$
in $Y$ and some (equivalently any) finite generating set $J^{0}$ for $J$ such
that for any third compact set $D$ and proper edge path rays $r$ and $s$ in
$\Lambda(J,J^{0})$ which are based at the same vertex $v$ and are such that
$z(r)$ and $z(s)$ have image in $Y-C$ then there is a path $\delta$ in $Y-D$
connecting $z(r)$ and $z(s)$ such that the loop determined by $\delta$ and the
initial segments of $z(r)$ and $z(s)$ is homotopically trivial in $Y-C_{0}$
(compare to Theorem \ref{ssequiv}(4)).

Note that this definition requires less than one requiring $z(r)$ and $z(s)$
be properly homotopic $rel\{z(v)\}$ in $Y-C_{0}$ (compare to Theorem
\ref{ssequiv}(3)). It may be that the path $\delta$ is not homotopic to a path
in the image of $z$ by a homotopy in $Y-C_{0}$. This definition is independent
of generating set $J^{0}$ and base point $\ast$ by a standard argument,
although $C$ may change as $J^{0}$, $\ast$ and $z$ do. When $J$ is semistable
at infinity in $Y$ with respect to $C_{0}$, we may say $J$ is
\textit{semistable at $\infty$ in $Y$ with respect to $J^{0}$, $C_{0}$, $C$
and $z$}. Observe that if $\hat C$ is compact containing $C$ then $J$ is also
semistable at $\infty$ in $Y$ with respect to $J^{0}$, $C_{0}$, $\hat C$ and
$z$.

If $J$ is 1-ended and semistable at $\infty$ or 2-ended, then $J$ is always
semistable at $\infty$ in $Y$ with respect to any compact subset $C_{0}$ of
$Y$. The semistability of the fundamental group at $\infty$ of a locally
finite CW complex only depends on the 2-skeleton of the complex (see for
example, Lemma 3 \cite{LR}). Similarly, the semistability at $\infty$ of a
group in a CW complex only depends on the 2-skeleton of the complex.

The notion of $J$ being co-semistable at infinity in a space $Y$ is a bit
technical, but has its roots in a simple idea that is fundamental to the main
theorems of \cite{GG12} and \cite{W92}. in both of these papers 
 $J$ is an infinite cyclic group acting as covering transformations on a 1-ended
simply connected space $Y$ with pro-monomorphic fundamental group at $\infty$.
Wright \cite{W92} showed that under these conditions the following could be proved:

$(\ast)$ Given any compact set $C_{0}\subset Y$ there is a compact set
$C\subset Y$ such that any loop in $Y-J\cdot C$ is homotopically trivial in
$Y-C_{0}$.

Condition ($\ast$) is all that is needed in \cite{GG12} and \cite{W92} in
order to prove the main theorems. In \cite{GGM16} condition ($\ast$) is used
to show $Y$ is proper 2-equivalent to $T\times\mathbb{R}$ (where $T$ is a
tree). Interestingly, there are many examples of finitely presented groups $G
$ (and spaces) with infinite cyclic subgroups satisfying $(\ast)$ but the
fundamental group at $\infty$ of $G$ is not pro-monomorphic (see
\cite{GGM16}). In fact, if $G$ has pro-monomorphic fundamental group at
$\infty$, then either $G$ is simply connected at $\infty$ or (by a result of
B. Bowditch \cite{Bo04}) $G$ is virtually a closed surface group and $\pi
_{1}^{\infty}(G)=\mathbb{Z}$.

Our co-semistability definition generalizes the conditions of $(\ast)$ in two
fundamental ways and our main theorem still concludes that $Y$ has semistable
fundamental group at $\infty$ (just as in the main theorem of \cite{GG12}).

1) First we expand $J$ from an infinite cyclic group to an arbitrary finitely
generated group and we allow $J$ to change as compact subsets of $Y$ become larger.

2) We weaken the requirement that loops in $Y-J\cdot C$ be trivial in
$Y-C_{0}$ to only requiring that loops in $Y-J\cdot C$ can be ``pushed"
arbitrarily far out in $Y-C_{0}$.

We are now ready to set up our co-semistability definition. A subset $S$ of
$Y$ is \textit{bounded} in $Y$ if $S$ is contained in a compact subset of $Y$.
Otherwise $S$ is \textit{unbounded} in $Y$. Fix an infinite finitely generated
group $J$ acting as covering transformations on $Y$ and a finite generating
set $J^{0}$ of $J$. Assume $J$ respects a cell structure on $Y$. Let
$p:Y\rightarrow J\backslash Y$ be the quotient map. If $K$ is a subset of $Y$,
and there is a compact subset $D$ of $Y$ such that $K\subset J\cdot D$
(equivalently $p(K)$ has image in a compact set), then $K$ is a $J$%
-\textit{bounded} subset of $Y$. Otherwise $K$ is a $J$-\textit{unbounded}
subset of $Y$. If $r:[0,\infty)\rightarrow Y$ is proper and $pr$ has image in
a compact subset of $J\backslash Y$ then $r$ is said to be $J$%
-\textit{bounded}. Equivalently, $r$ is a $J$-bounded proper edge path in $Y$
if and only if $r$ has image in $J\cdot D$ for some compact set $D\subset Y$.
In this case, there is an integer $M$ (depending only on $D$) such that each
vertex of $r$ is within (edge path distance) $M$ of a vertex of $J\ast$. Hence
$r$ `determines' a unique end of the Cayley graph $\Lambda(J,J^{0})$.

For a non-empty compact set $C_{0}\subset Y$ and finite subcomplex $C$
containing $C_{0}$ in $Y$, let $U$ be a $J$-unbounded component of $Y-J\cdot
C$ and let $r$ be a $J$-bounded proper ray with image in $U$. We say $J$ is
co-\textit{semistable at $\infty$ in }$U$ \textit{with respect to $r$ and}
$C_{0}$ if for any compact set $D$ and loop $\alpha:[0,1]\rightarrow U$ with
$\alpha(0)=\alpha(1)=r(0)$ there is a homotopy $H:[0,1]\times\lbrack
0,n]\rightarrow Y-C_{0}$ such that $H(t,0)=\alpha(t)$ for all $t\in
\lbrack0,1]$ and $H(0,s)=H(1,s)=r(s)$ for all $s\in\lbrack0,n]$ and
$H(t,n)\subset Y-D$ for all $t\in\lbrack0,1]$. This means that $\alpha$ can be
pushed along $r$ by a homotopy in $Y-C_{0}$ to a loop in $Y-D$. We say $J$ is
\textit{co-semistable at $\infty$ in }$Y$ \textit{with respect to $C_{0}$}
(and $C$) if $J$ is co-semistable at $\infty$ in $U$ with respect to $r$ and
$C_{0}$ for each $J$-unbounded component $U$ of $Y-J\cdot C$, and any proper
$J$-bounded ray $r$ in $U$. Note that if $\hat{C}$ is a finite complex
containing $C$, then $J$ is also co-semistable at $\infty$ in $Y$ with respect
to $C_{0}$ and $\hat{C}$.

It is important to notice that our definition only requires that loops in $U$
can be pushed arbitrarily far out in $Y-C_{0}$ along proper $J$-bounded rays
in $U$ (as opposed to all proper rays in $U$).

\section{An outline of the proof of the main theorem}

\label{out} A number of technical results are necessary to prove the main
theorem. The outline in this section is intended to give the geometric
intuition behind these results and describe how they connect to prove the main
theorem. Figure 6 will be referenced throughout this section. Here
$C_{0}$ is an arbitrary compact subset of $Y$, $J^{0}$ is a finite generating
set for the group $J$ which respects a locally finite cell structure on $Y$
and acts as covering transformations on $Y$. The finite subcomplex $C$ of $Y$
is such that $J$ is co-semistable at $\infty$ in $Y$ with respect to $C_{0}$
and $C$, and $J$ is semistable at $\infty$ in $Y$ with respect to $J^{0}$,
$C_{0}$ and $C$. The proper base ray is $r_{0}$, $E$ is a finite union of
specially selected compact sets and $\alpha$ is a loop based on $r_{0}$ with
image in $Y-E$. The path $\alpha$ is broken into subpaths $\alpha=(\alpha
_{1},e_{1},\beta_{1},\tilde{e}_{1},\alpha_{2},\ldots,\alpha_{n})$ where the
$\alpha_{i}$ lie in $J\cdot C$, the $\beta_{i}$ lie in $Y-J\cdot C$ and the
edges $e_{i}$ and $\tilde{e}_{i}$ serve as \textquotedblleft transition
edges". We let $F$ be an arbitrary large compact set and we must show that
$\alpha$ can be pushed along $r_{0}$ to a loop outside of $F$ by a homotopy
avoiding $C_{0}$ (see Theorem \ref{ssequiv} (2)).

In $\S $\ref{stabil} and $\S $\ref{SVK} we show $Y-J\cdot C$ has only finitely
many $J$-unbounded components (up to translation in $J$) and that the
stabilizer of any one of these components is infinite and finitely generated.
We pick a finite collection of $J$-unbounded components of $Y-J\cdot C$ such
that no two are $J$-translates of one another, and any $J$-unbounded component
of $Y-J\cdot C$ is a translate of one of these finitely many. Each
$g_{i}U_{f(i)}$ in Figure 6 is such that $g_{i}\in J$ and $U_{f(i)}$ is one of
these finitely many components. The edges $e_{i}$ have initial vertex in
$J\cdot C$ and terminal vertex in $g_{i}U_{f(i)}$. Similarly for $\tilde
e_{i}$. The fact that the stabilizer of a $J$-unbounded component of $Y-J\cdot
C$ is finitely generated and infinite allows us to construct the proper edge
path rays $r_{i}$, $\tilde r_{i}$, $s_{i}$ and $\tilde s_{i}$ in Figure 6. Let
$S_{i}$ be the (finitely generated infinite) $J$-stabilizer of $g_{i}U_{f(i)}%
$. Lemma \ref{JendsUnify} allows us to construct proper edge path rays $r_{i}$
in $J\cdot C$ (far from $C_{0}$) that are ``$S_{i}$-edge paths", and proper
rays $s_{i}$ in $g_{i}U_{f(i)}$ so that $s_{i}$ and $r_{i}$ are (uniformly
over all $i$) ``close" to one another. Hence $r_{i}$ is properly homotopic
$rel\{r_{i}(0)\}$ to $(\gamma_{i}, e_{i}, s_{i})$ by a homotopy in $Y-C_{0}$.
This mean $e_{i}$ can be ``pushed" between $s_{i}$ and $(\gamma_{i}^{-1},
r_{i})$ into $Y-F$ by a homotopy avoiding $C_{0}$ and we have the first step
in moving $\alpha$ into $Y-F$ by a homotopy avoiding $C_{0}$. Similarly for
$\tilde r_{i}$, $\tilde s_{i}$ and $\tilde e_{i}$.

Since all of the paths/rays $\alpha_{i}$, $\gamma_{i}$, $r_{i}$, $\tilde
\gamma_{i}$, and $\tilde r_{i}$ have image in $J\cdot C$, they are uniformly
(only depending on the size of the compact set $C$) close to $J$-paths/rays.
But the semistability at $\infty$ of $J$ in $Y$ with respect to $C_{0}$ then
implies there is a path $\delta_{i}$ connecting $(\tilde\gamma_{i-1}^{-1},
\tilde r_{i-1})$ and $(\alpha_{i}, \gamma_{i}^{-1}, r_{i})$ in $Y-F$ such that
the loop determined by $\delta_{i}$ and the initial segments of $(\tilde
\gamma_{i-1}^{-1}, \tilde r_{i-1})$ and $(\alpha_{i}, \gamma_{i}^{-1}, r_{i})$
is homotopically trivial by a homotopy avoiding $C_{0}$. Geometrically that
means $\alpha_{i}$ can be pushed outside of $F$ by a homotopy between
$(\tilde\gamma_{i-1}^{-1}, \tilde r_{i-1})$ and $(\gamma_{i}^{-1},r_{i})$, and
with image in $Y-C_{0}$.

All that remains is to push the $\beta_{i}$ into $Y-F$ by a homotopy between
$s_{i}$ and $\tilde{s}_{i}$. A serious technical issue occurs here. If we knew
that $s_{i}$ and $\tilde{s}_{i}$ converged to the same end of $g_{i}U_{f(i)}$
then we could find a path in $g_{i}U_{f(i)}-F$ connecting $s_{i}$ and
$\tilde{s}_{i}$ and Lemma \ref{USS} explains how to use the assumtion that $J$
is co-semistable at $\infty$ in $Y$ with respect to $C_{0}$, to slide
$\beta_{i}$ between $s_{i}$ and $\tilde{s}_{i}$ to a path in $Y-F$, finishing
the proof of the main theorem. But at this point there is no reason to believe
$s_{i}$ and $\tilde{s}_{i}$ determine the same end of $g_{i}U_{f(i)}$. This is
where two of the main lemmas (and two of the most important ideas) of the
paper, Lemmas \ref{far} and \ref{last} come in. All but finitely many of the
components $gU_{i}$ of $Y-J\cdot C$ avoid a certain compact subset of $E$. If
$g_{i}U_{f(i)}$ is one of these components then Lemma \ref{far} explains how
to select the proper ray $\tilde{r}_{i}$ and a path $\psi$ in $Y-F$ connecting
$r_{i}$ and $\tilde{r}_{i}$ so that the loop determined by $\psi$, initial
segments of $r_{i}$ and $\tilde{r}_{i}$ and the path $(\gamma_{i},e_{i}%
,\beta_{i},\tilde{e}_{i},\tilde{\gamma}_{i}^{-1})$ is homotopically trivial in
$Y-C_{0}$ (so that the section of $\alpha$ defined by $(e_{i},\beta_{i}%
,\tilde{e}_{i}$) can be pushed into $Y-F$ by a homotopy between $(\gamma
_{i}^{-1},r_{i})$ and $(\tilde{\gamma}_{i}^{-1},\tilde{r}_{i})$). Lemma
\ref{last} tells us how to select the compact set $E$ so that if
$g_{i}U_{f(i)}$ is one of the finitely many remaining components of $Y-J\cdot
U$, then the proper rays $s_{i}$ and $\tilde{s}_{i}$ can be selected, so that
$s_{i}$ and $\tilde{s}_{i}$ converge to the same end of $g_{i}U_{f(i)}$. In
either case, $\alpha$ is homotopic $rel\{r_{0}\}$ to a loop in $Y-F$ by a
homotopy in $Y-C_{0}$.

\section{Stabilizers of $J$-unbounded components}

\label{stabil} Throughout this section, $J$ is a finitely generated group
acting as cell preserving covering transformations on a simply connected
locally finite 1-ended CW complex $Y$ and $p:Y\to J\backslash Y$ is the
quotient map. Suppose $C$, is a large (see Theorem \ref{Craig}) finite
subcomplex of $Y$ and $U$ is a $J$-unbounded component of $Y-J\cdot C$. Lemma
\ref{trans} and Theorem \ref{Craig} show the $J$-stabilizer of $U$ is finitely
generated and infinite. Lemma \ref{Jends} shows that there is a finite
subcomplex $D(C)\subset Y$ such that for any compact $E$ containing $D $ and
any $J$-unbounded component $U$ of $Y-J\cdot C$ there is a special bijection
$\mathcal{M}$ between the set of ends of the $J$-stabilizer of $U$ and the
ends of $U\cap(J\cdot E)$. For $C$ compact in $Y$, Lemma \ref{finite} shows
there are only finitely many $J$-unbounded components of $Y-J\cdot C$ up to
translation in $J$.

Suppose that $J$ is semistable at $\infty$ in $Y$ with respect to $C_{0}$ and
$C$, $U$ is a $J$-unbounded component of $Y-J\cdot C$ and $J$ is co-semistable
at $\infty$ in $U$ with respect to the proper $J$-bounded ray $r$ and $C_{0}$.
Once again co-semistability at $\infty$ only depends on the 2-skeleton of $Y$
and from this point on we may assume that $Y$ is 2-dimensional. The next two
lemmas reduce complexity again by showing that in certain instances we need
only consider locally finite 2-complexes with edge path loop attaching maps on
2-cells. Such complexes are in fact simplicial and this is important for our
arguments in $\S $\ref{SVK}.

\begin{lemma}
\label{reduce} Suppose $Y$ is a locally finite 2-complex and the finitely
generated group $J$ acts as cell preserving covering transformations on $Y$,
then there is a $J$-equivariant subdivision of the 1-skeleton of $Y$ and a
locally finite 2-complex $X$ also admitting a cell preserving $J$-action such that:

\begin{enumerate}
\item The image of a 2-cell attaching map for $Y$ is a finite subcomplex of
$Y$.

\item The space $X$ has the same 1-skeleton as $Y$ and there is a
$J$-equivariant bijection between the cells of $Y$ and $X$ that is the
identity on vertices and edges and if $a$ is a 2-cell attaching map for $Y$
and $a^{\prime}$ is the corresponding 2-cell attaching map for $X$ then $a$
and $a^{\prime}$ are homotopic in the image of $a$, and $a^{\prime}$ is an
edge path loop with the same image as $a$.

\item The action of $J$ on $X$ is the obvious action induced by the action of
$J$ on $Y$.

\item If $K_{1}$ is a finite subcomplex of $Y$ and $K_{2}$ is the
corresponding finite subcomplex of $X$, then there is a bijective
correspondence between the $J$-unbounded components of $Y-J\cdot K_{1}$ and
$X-J\cdot K_{2}$, so that if $U_{1}$ is a $J$-unbounded component of $Y-J\cdot
K_{1}$ and $U_{2}$ is the corresponding component of $X-J\cdot K_{2}$ then
$U_{1}$ and $U_{2}$ are both a union of open cells, and the bijection of cells
between $Y$ and $X$ induces a bijection between the open cells of $U_{1}$ and
$U_{2}$. In particular, the $J$-stabilizer of $U_{1}$ is equal to that of
$U_{2}$.
\end{enumerate}
\end{lemma}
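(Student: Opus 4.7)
The plan is to construct $X$ in two steps, each done $J$-equivariantly: first subdivide the 1-skeleton of $Y$ so that the image of every 2-cell attaching map becomes a finite subcomplex, then replace each attaching map by a homotopic edge-path loop tracing that same image. Because $J$ acts as cell-preserving covering transformations, each 2-cell has trivial stabilizer, so any construction made on a chosen set $\mathcal{E}$ of $J$-orbit representatives extends uniquely and compatibly to all 2-cells by $J$-translation.

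For each $e \in \mathcal{E}$, the attaching map $\phi_e : S^1 \to Y^{(1)}$ has compact image $F_e$ contained in a finite subcomplex. I would apply a cellular/simplicial-approximation argument to subdivide $Y^{(1)}$ by adding finitely many new vertices (interior to the edges that $F_e$ meets non-cellularly), and in parallel to produce, within that finite subcomplex, an edge-path loop $\psi_e$ whose image equals $F_e$ (now a genuine subcomplex of the subdivided 1-skeleton) and which is homotopic to $\phi_e$ inside $F_e$. Then I would $J$-equivariantly extend both the subdivision and the choice of $\psi$ by setting $\psi_{j \cdot e} := j \circ \psi_e$ and saturating the subdivision vertices under $J$; local finiteness survives because only finitely many new vertices are added per orbit representative, and properness of the $J$-action spreads these out discretely.

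Next define $X$ to have the same subdivided 1-skeleton as $Y$ and, in bijection with the 2-cells $e$ of $Y$, 2-cells $e'$ attached by the edge-path loops $\psi_e$. The cell bijection is the identity on vertices and edges and is $J$-equivariant, and the $J$-action on $X$ is the cell-preserving action induced from $Y$. Conditions (1), (2), and (3) then follow immediately from the construction. For condition (4), observe that $Y$ and $X$ have identical 1-skeleta and that each 2-cell $e$ of $Y$ is attached along the same finite 1-subcomplex $F_e$ as its counterpart $e'$ in $X$; so for corresponding finite subcomplexes $K_1 \subset Y$ and $K_2 \subset X$, a 2-cell of $Y$ lies in $J \cdot K_1$ iff its counterpart lies in $J \cdot K_2$, and two remaining open cells are incident in $Y - J \cdot K_1$ iff their counterparts are incident in $X - J \cdot K_2$. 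Hence the cellular incidence graphs of the two complements are isomorphic, yielding a bijection of components, compatibility with the cell bijection, and equality of $J$-stabilizers; the $J$-unbounded components on one side correspond precisely to those on the other.

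The main obstacle I anticipate is arranging the per-cell edge-path approximation so that $\psi_e$ has image \emph{exactly} $F_e$ (rather than merely a homotopic image), while keeping the resulting $J$-saturated subdivision locally finite. The delicate point is to verify that, after an initial cellular reshuffling of $\phi_e$ supported in a neighborhood of $F_e$, one may assume $F_e$ meets each edge of $Y^{(1)}$ in at most finitely many closed subintervals, so that only finitely many subdivision vertices per edge are needed. Once this tameness of $F_e$ is secured in a $J$-equivariant way, the remaining steps of the proof are routine bookkeeping.
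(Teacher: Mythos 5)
Your construction is essentially the same as the paper's: equivariantly subdivide the 1-skeleton so that each attaching-map image becomes a finite subcomplex, replace each attaching map by a homotopic edge-path loop with the same image, and deduce (4) from the fact that the open-cell incidence structure of the complements is unchanged (the paper's Claims \ref{reduce}.1 and \ref{reduce}.2 make your ``incidence graph'' argument precise). The one obstacle you flag --- tameness of $F_e$ on each edge --- is handled in the paper without any reshuffling, by observing that a compact connected subset of the 1-skeleton not contained in a single closed edge meets that closed edge in at most two closed intervals, each containing an endpoint, so only finitely many subdivision vertices per edge are ever needed.
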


\begin{proof}
Suppose $D$ is a 2-cell of $Y$ and the attaching map on $S^{1}$ for $D$ is
$a_{D} $. Then the image of $a_{D}$ is a compact connected subset of the
1-skeleton of $Y$. If $e$ is an edge of $Y$ then $im(a_{D})\cap e$ is either
$\emptyset$, a single closed interval or a pair of closed intervals (we
consider a single point to be an interval). In any case add vertices when
necessary to make the end points of these intervals vertices. This process is
automatically $J$-equivariant and locally finite. The map $a_{D}$ is homotopic
(in the image of $a_{D}$) to an edge path loop $b_{D}$ with image the same as
that of $a_{D}$. Let $Z$ be the 1-skeleton of $Y$. Attach a 2-cell $D^{\prime
}$ to $Z$ with attaching map $b_{D}$. For $j\in J$ the attaching map for $jD$
is $ja_{D}$ and we automatically have an attach map for $X$ (corresponding to
the cell $jD$) defined by $jb_{D}$. This construction is $J$-equivariant. Call
the resulting locally finite 2-complex $X$ and define the action of $J$ on $X$
in the obvious way.

It remains to prove part 4. Suppose $K_{1}$ and $K_{2}$ are corresponding
finite subcomplexes of $Y$ and $X$ respectively. Recall that vertices are open
(and closed) cells of a CW complex and every point of a CW complex belongs to
a unique open cell. If $A$ is an open cell of $Y$ then either $A$ is a cell of
$J\cdot K_{1}$ or $A$ is a subset of $Y-J\cdot K_{1}$.

\medskip

\noindent\textbf{Claim \ref{reduce}.1} Suppose $U$ is a component of $Y-J\cdot
K_{1}$. If $p$ and $q$ are distinct points of $U$ then there is a sequence of
open cells $A_{0},\ldots, A_{n}$ of $U$ such that $p\in A_{0}$, $q\in A_{n}$
and either $A_{i}\cap\bar A_{i+1}\ne\emptyset$ or $\bar A_{i}\cap A_{i+1}%
\ne\emptyset$. (Here $\bar A$ is the closure of $A$ in $Y$, equivalently the
closed cell corresponding to $A$.)

\begin{proof}
Let $\alpha$ be a path in $U$ from $p$ to $q$. By local finiteness, there are
only finitely many closed cells $B_{0}, \ldots, B_{n}$ that intersect the
compact set $im(\alpha)$. Note that $B_{i}\not \subset K$ so that the open
cell $A_{i}$ for $B_{i}$ is a subset of $U$. In particular, $im(\alpha)\subset
A_{0}\cup\cdots\cup A_{n}$. Let $0=x_{0}$ and assume that $\alpha(x_{0})=p\in
A_{0}$. Let $x_{1}$ be the last point of $\alpha^{-1}(B_{0})$ in $[0,1]$ (it
may be that $x_{1}=x_{0}$). If $\alpha(x_{1})\not \in A_{0}$ then
$\alpha(x_{1})\in A_{1}\cup\cdots\cup A_{n}$ and assume that $\alpha(x_{1})\in
A_{1}$. In this case $\alpha(x_{1})\in\bar A_{0}\cap A_{1}(=B_{0}\cap A_{1})$.

If $\alpha(x_{1})\in A_{0}$, then take a sequence of points $\{t_{i}\}$ in
$(x_{1},1] $ converging to $x_{1}$. Infinitely many $\alpha(t_{i})$ belong to
some $A_{j}$ for $j\geq1$ (say $j=1$). Then $\alpha(x_{1})\in A_{0}\cap\bar
A_{1}$.

Let $x_{2}$ be the last point of $\alpha^{-1} (B_{1})$ in $[0,1]$. Continue inductively.
\end{proof}

\noindent\textbf{Claim \ref{reduce}.2} If $A_{1}\ne A_{2}$ are open cells of
$Y$ such that $A_{1}\cap\bar A_{2}\ne\emptyset$ and $A_{i}$ corresponds to the
open cell $Q_{i}$ of $X$ for $i\in\{1,2\}$, then $Q_{i}\cap\bar Q_{i+1}%
\ne\emptyset$.

\begin{proof}
We only need check this when $A_{1}$ or $A_{2}$ is a 2-cell (otherwise
$Q_{i}=A_{i}$). Note that $A_{1}$ is not a 2-cell, since otherwise $A_{1}%
\cap\bar A_{2}=\emptyset$. If $A_{2}$ is a 2-cell, and $A_{1}\cap\bar A_{2}%
\ne\emptyset$ then by construction $A_{1}\subset\bar A_{2}$, and $Q_{1}%
\subset\bar Q_{2}$.
\end{proof}

Write $U$ as a union $\cup_{i\in I}A_{i}$ of the open cells in $U$. Let
$Q_{i}$ be the open cell of $X$ corresponding to $A_{i}$. By Claims
\ref{reduce}.1 and \ref{reduce}.2, $\cup_{i\in I}Q_{i}$ is a connected subset
of $X-J\cdot K_{2}$. The roles of $X$ and $Y$ can be reversed in Claims
\ref{reduce}.1 and \ref{reduce}.2. Then writing a component of $X-J\cdot
K_{2}$ as a union of its open cells $\cup_{l\in L}Q_{l}$ (and letting $A_{l}$
be the open cell of $Y$ corresponding to $Q_{l}$) we have $\cup_{l\in L}A_{l}$
is a connected subset of $Y-J\cdot K_{1}$.
\end{proof}

\begin{remark}
\label{HE} There are maps $g:X\rightarrow Y$ and $f:Y\rightarrow X$ that are
the identity on 1-skeletons and such that $fg$ and $gf$ are properly homotopic
to the identity maps relative to the 1-skeleton. In particular, $X$ and $Y$
are proper homotopy equivalent. This basically follows from the proof of
Theorem 4.1.8 of \cite{G}. These facts are not used in this paper.
\end{remark}

The remainder of this section is a collection of elementary (but useful)
lemmas. The \textit{boundary} of a subset $S$ of $Y$ (denoted $\partial S$) is
the closure of $S$ (denoted $\bar S$) delete the interior of $S$. If $K$ is a
subcomplex of a 2-complex $Y$ then $\partial K$ is a union of vertices and edges.

\begin{lemma}
\label{cover} If $A\subset Y$, then $p(A)=p(J\cdot A)$ and $p^{-1}
(p(A))=J\cdot A$. If $C$ is compact in $Y$ and $B$ is compact in $J\backslash
Y$ such that $p(C)\subset B$, then there is a compact set $A\subset Y$ such
that $C\subset A$ and $p(A)=B$.
\end{lemma}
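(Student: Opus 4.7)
The plan is to handle the two assertions separately: the first is pure set-theoretic bookkeeping for a group-quotient map, and the second uses the fact that $p\colon Y\to J\backslash Y$, being the quotient by a free properly discontinuous action, is a covering map and in particular a local homeomorphism.

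For the first assertion, I would observe that $p(ja)=p(a)$ for every $j\in J$ and every $a\in A$, so $p(J\cdot A)\subseteq p(A)$; the reverse inclusion is immediate since $A\subseteq J\cdot A$. This gives $p(A)=p(J\cdot A)$. For the preimage statement, $J\cdot A\subseteq p^{-1}(p(A))$ is trivial, while if $y\in p^{-1}(p(A))$ then $p(y)=p(a)$ for some $a\in A$, meaning $y$ and $a$ lie in the same $J$-orbit, so $y=ja\in J\cdot A$ for some $j\in J$.

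For the second assertion, I would first use that $Y$ is a locally finite CW complex, hence locally compact, and that $p$ is a covering projection; therefore each point $b\in B$ has an open neighborhood $V_b$ in $J\backslash Y$ with compact closure $\overline{V_b}$ such that $p^{-1}(\overline{V_b})$ is a disjoint union of sheets, each mapped homeomorphically by $p$ onto $\overline{V_b}$. Since $B$ is compact, finitely many such neighborhoods $V_{b_1},\ldots,V_{b_n}$ cover $B$. For each $i$ pick one sheet $\widetilde{V}_i\subseteq Y$ over $\overline{V_{b_i}}$ and set $A_i=(p|_{\widetilde{V}_i})^{-1}(B\cap\overline{V_{b_i}})$. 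Each $A_i$ is a closed subset of the compact set $\widetilde{V}_i$, hence compact, and satisfies $p(A_i)=B\cap\overline{V_{b_i}}$. Now define
\[
A \;=\; C\cup A_1\cup\cdots\cup A_n.
\]
Then $A$ is a finite union of compact sets, hence compact, and $C\subseteq A$. Finally, $p(A)=p(C)\cup\bigcup_i p(A_i)\subseteq B$ since $p(C)\subseteq B$ by hypothesis and each $p(A_i)\subseteq B$ by construction, while the reverse inclusion $B\subseteq p(A)$ follows because $B=\bigcup_i(B\cap V_{b_i})\subseteq\bigcup_i p(A_i)\subseteq p(A)$.

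There is no serious obstacle; the only mild subtlety is arranging the equality $p(A)=B$ rather than a mere inclusion $p(A)\supseteq B$, which is why each lift $A_i$ is cut down to $(p|_{\widetilde{V}_i})^{-1}(B\cap\overline{V_{b_i}})$ instead of taking all of $\widetilde{V}_i$.
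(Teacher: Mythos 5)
Your proof is correct and follows essentially the same route as the paper: cover $B$ by finitely many evenly covered open sets with compact, evenly covered closures, select sheets over them, and cut the resulting compact union down so that it maps exactly onto $B$. The only (harmless) difference is that you guarantee $C\subseteq A$ by simply adjoining $C$ to the union, whereas the paper instead chooses enough sheets to cover $C$ and then intersects with $p^{-1}(B)$; both work because $p(C)\subseteq B$.
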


\begin{proof}
The first part of the lemma follows directly from the definition of $J\cdot A
$. Cover $B\subset J\backslash Y$ by finitely many evenly covered open sets
$U_{i}$ for $i\in\{1,\ldots, n\}$ such that $\bar U_{i}$ is compact and evenly
covered. Pick a finite number of sheets over the $\bar U_{i}$ that cover $C$
and so that there is at least one sheet over each $\bar U_{i}$. Call these
sheets $K_{1},\ldots, K_{m}$. Let $A=(\cup_{i=1}^{m} K_{i})\cap p^{-1}(B)$.
Then $C\subset A$, and $A$ is compact since $(\cup_{i=1}^{m} K_{i})$ is
compact and $p^{-1}(B)$ is closed. We claim that $p(A)=B$. Clearly
$p(A)\subset B$. If $b\in B$, then there is $j\in\{1,\ldots, n\}$ such that
$b\in\bar U_{j}$. Then there is $k_{b}\in K_{j^{\prime}}$ such that
$p(k_{b})=b$, and so $k_{b}\in p^{-1}(B)\cap(\cup_{i=1}^{m} K_{i})$ and $p$
maps $A$ onto $B$.
\end{proof}

\begin{remark}
\label{R2} If $C$ is a compact subset of $Y$, $j$ is an element of $J$ and $U
$ is a component of $Y-J\cdot C$ then $j(U)$ is a component of $Y-J\cdot C$,
and $p(U)$ is a component of $J\backslash Y-p(C)$.
\end{remark}

\begin{lemma}
\label{Unbd} Suppose $C$ is a non-empty compact subset of $Y$ and $U$ is an
unbounded component of $Y-J\cdot C$. Then $\partial U$ is an unbounded subset
of $J\cdot C$.
\end{lemma}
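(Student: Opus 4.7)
The plan is to prove the two assertions separately. For the inclusion $\partial U \subseteq J\cdot C$, I would first note that since $J$ acts on $Y$ by cell-permuting covering transformations with $C$ compact, $J\cdot C$ is closed in $Y$ (proper discontinuity forces any convergent sequence $j_{n}c_{n}\to y$ to have $j_{n}$ eventually constant after passing to a subsequence, so $y\in J\cdot C$). Therefore $U$, as a component of the open set $Y-J\cdot C$, is itself open, and $\partial U=\bar U\setminus U$. For any hypothetical $p\in\partial U$ with $p\notin J\cdot C$, I would choose a path-connected open neighborhood $N$ of $p$ inside $Y-J\cdot C$; since $p\in\bar U$ the set $N$ must meet $U$, and maximality of the component $U$ then forces $N\subseteq U$ and hence $p\in U$, a contradiction.

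For the unboundedness of $\partial U$, I would argue by contradiction: assume $\partial U$ is bounded, so contained in some finite subcomplex $K$ of $Y$. The key observation is that
\[
\bar U \;\subseteq\; U\cup\partial U \;\subseteq\; U\cup K,
\]
so $U\cup K$ is closed in $Y$ and the complement $V:=Y-(U\cup K)$ is open. I would then verify that $V$ is unbounded: because $J$ is infinite and acts properly discontinuously, every orbit is discrete and unbounded, so $J\cdot C$ is unbounded, and since $U\cap J\cdot C=\emptyset$, the unbounded set $J\cdot C\setminus K$ lies entirely inside $V$.

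To finish, I would invoke 1-endedness of $Y$ to enlarge $K$ to a finite subcomplex $L$ for which $Y-L$ has a unique unbounded component, and decompose
\[
Y-L \;=\; (U\setminus L)\,\sqcup\,(V\setminus L)
\]
as a disjoint union of two open subsets of $Y-L$. Any connected subset of $Y-L$ lies entirely in one piece, so each piece is a union of components of $Y-L$. Since $U$ is unbounded with $L$ compact, $U\setminus L$ is unbounded; similarly $J\cdot C\setminus L\subseteq V\setminus L$ is unbounded. Each piece must therefore contribute an unbounded component, producing two distinct unbounded components of $Y-L$ and contradicting 1-endedness.

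The step I expect to be the main obstacle is verifying that each side of the decomposition actually contains an unbounded \emph{component}, rather than merely being an unbounded union of bounded components. This rests on the standard fact that for a connected locally finite CW complex, the complement of a compact subcomplex has only finitely many components (each component's closure meets $L$, and only finitely many cells abut $L$); once this is in hand, an unbounded union of components must include one that is itself unbounded, and the contradiction closes the proof.
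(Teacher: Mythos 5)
Your proof is correct and is essentially the paper's own argument: the paper's two-line proof observes that a bounded $\partial U$ would be compact and would ``separate $U$ from $J\cdot C$, contradicting the fact that $Y$ is 1-ended,'' which is exactly the separation-plus-one-endedness contradiction you carry out in detail. Your write-up merely supplies the steps the paper leaves implicit --- that $\partial U\subseteq J\cdot C$, that $J\cdot C$ is unbounded because $J$ is infinite and $C\neq\emptyset$, and that each of the two unbounded open pieces of $Y-L$ must contain an unbounded \emph{component} --- all of which are correct.
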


\begin{proof}
Otherwise $\partial U$ is closed and bounded in $Y$ and therefore compact. But
$\partial U$ separates $U$ from $J\cdot C$, contradicting the fact that $Y$ is 1-ended.
\end{proof}

The next remark establishes a minimal set of topological conditions on a
topological space $X$ in order to define the number of ends of $X$.

\begin{remark}
\label{ends} If $X$ is a connected, locally compact, locally connected
Hausdorff space and $C$ is compact in $X$, then $C$ union all bounded
components of $X-C$ is compact, any neighborhood of $C$ contains all but
finitely many components of $X-C$, and $X-C$ has only finitely many unbounded components.
\end{remark}

\begin{lemma}
\label{finite} Suppose $C$ is a compact subset of $Y$ and $U$ is a component
of $Y-J\cdot C$. Then $U$ is $J$-unbounded if and only if $p(U)$ is an
unbounded component of $J\backslash Y-p(C)$. Hence up to translation by $J$
there are only finitely many $J$-unbounded components of $Y-J\cdot C$.
\end{lemma}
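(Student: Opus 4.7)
The plan is to reduce the statement to Remark \ref{ends} applied to the quotient space $J\backslash Y$, using Lemma \ref{cover} as the translation device between $Y$ and $J\backslash Y$.

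First I would establish the equivalence $U$ is $J$-unbounded $\Leftrightarrow$ $p(U)$ is not contained in any compact subset of $J\backslash Y$. The forward direction is by contradiction: if $p(U) \subseteq B$ for some compact $B \subset J\backslash Y$, apply Lemma \ref{cover} (with any convenient compact lift of $p(C)$ enlarged so that $p$ of it contains $B$) to produce a compact $A \subset Y$ with $p(A)=B$; then $U \subseteq p^{-1}(B)=p^{-1}(p(A))=J\cdot A$ by Lemma \ref{cover}, contradicting $J$-unboundedness. The reverse direction is immediate: if $U \subseteq J\cdot D$ with $D$ compact, then $p(U) \subseteq p(D)$, which is compact.

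Next I would show $p(U)$ is actually a component of $J\backslash Y - p(C)$, not merely a connected subset of it. Since $J\cdot C = p^{-1}(p(C))$ by Lemma \ref{cover}, we have $p(U) \subseteq J\backslash Y - p(C)$, and $p(U)$ is connected. The covering transformation action is free and properly discontinuous on cells, so $p$ is an open map and each component of $p^{-1}(V)$, where $V$ is the component of $J\backslash Y - p(C)$ containing $p(U)$, maps onto $V$. Since $U$ is a component of $Y - J\cdot C = p^{-1}(J\backslash Y - p(C))$, it must coincide with one such component of $p^{-1}(V)$, forcing $p(U) = V$. Combined with the previous paragraph, this gives the biconditional: $U$ is $J$-unbounded $\Leftrightarrow$ $p(U)$ is an unbounded component of $J\backslash Y - p(C)$.

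For the finiteness conclusion, $J\backslash Y$ is connected, locally compact, locally connected, and Hausdorff (inherited from the $CW$ complex $Y$ via the covering projection $p$), and $p(C)$ is compact, so Remark \ref{ends} yields only finitely many unbounded components of $J\backslash Y - p(C)$. Finally, the assignment $U \mapsto p(U)$ descends to a map from $J$-orbits of $J$-unbounded components of $Y - J\cdot C$ into this finite set; it is injective because if $p(U_1) = p(U_2)$, then $U_2 \subseteq p^{-1}(p(U_1)) = J\cdot U_1$, and since $U_2$ is connected while the translates $jU_1$ are pairwise disjoint components of $Y - J\cdot C$ (Remark \ref{R2}), we must have $U_2 = jU_1$ for some $j \in J$.

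The only mildly delicate point is the argument that $p(U)$ is a component rather than merely a connected subset; the rest is bookkeeping with Lemma \ref{cover} and Remark \ref{ends}. Given the setup already laid out, I do not anticipate a serious obstacle.
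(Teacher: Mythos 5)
Your proof is correct and follows essentially the same route as the paper's: pass to the quotient $J\backslash Y$, identify $p(U)$ as a component of $J\backslash Y-p(C)$, and invoke Remark \ref{ends}. The only differences are cosmetic: where you use Lemma \ref{cover} to show that boundedness of $p(U)$ forces $U\subseteq J\cdot A$, the paper instead lifts a proper ray from an unbounded $p(U)$ into $U$; and you spell out details (that $p(U)$ is a full component, and the injectivity of $U\mapsto p(U)$ on $J$-orbits) that the paper delegates to Remark \ref{R2} or leaves implicit.
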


\begin{proof}
First observe that $p(C)\cap p(U)=\emptyset$. Suppose $p(U)$ is unbounded.
Choose a ray $r:[0,\infty)\to p(U)$ such that $r$ is proper in $J\backslash Y
$. Select $u\in U$ such that $p(u)=r(0)$. Lift $r$ to $\tilde r$ at $u$. Then
$\tilde r$ has image in $U$, and there is no compact set $D\subset Y$ such
that $im (\tilde r)\subset J\cdot D$. Hence $U$ is $J$-unbounded. If $U$ is
$J$-unbounded then by definition, $p(U)$ is not a subset of a compact subset
of $Y$.
\end{proof}

\begin{lemma}
\label{Bded} Suppose $C$ is a compact subset of $Y$. Then there is a compact
subset $D\subset Y$ such that $C\subset D$, every $J$-bounded component of
$Y-J\cdot C$ is a subset of $J\cdot D$ and each component of $Y-J\cdot D$ is
$J$-unbounded.
\end{lemma}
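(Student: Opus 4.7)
The plan is to transport the whole problem to the quotient $\bar{Y}:=J\backslash Y$ via the covering map $p$, where ``$J$-bounded'' becomes ``bounded''. The compact set $D$ will be constructed as a compact lift of a canonical compact thickening of $p(C)$ inside $\bar{Y}$.

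More precisely, note that $\bar{Y}$ is a connected, locally compact, locally connected Hausdorff space (being the quotient of a connected, locally finite CW complex by a cellular free action), so Remark~\ref{ends} applies to $\bar{Y}$ and the compact set $p(C)$. I would let $B$ be the union of $p(C)$ with all bounded components of $\bar{Y}-p(C)$; Remark~\ref{ends} tells us $B$ is compact and that $\bar{Y}-B$ is the disjoint union of the finitely many unbounded components of $\bar{Y}-p(C)$. Since $p(C)\subset B$ and $B$ is compact, Lemma~\ref{cover} provides a compact set $D\subset Y$ with $C\subset D$ and $p(D)=B$. The first condition, $C\subset D$, is then immediate. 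For the second condition, suppose $U$ is a $J$-bounded component of $Y-J\cdot C$. Lemma~\ref{finite} gives that $p(U)$ is a bounded component of $\bar{Y}-p(C)$, so $p(U)\subset B=p(D)$. Using Lemma~\ref{cover} again, $U\subset p^{-1}(p(U))\subset p^{-1}(p(D))=J\cdot D$, as required.

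The main obstacle is the third condition: showing that every component $V$ of $Y-J\cdot D$ is $J$-unbounded. The naive attempt is to note that $V$ lies in some component $U$ of $Y-J\cdot C$, which must be $J$-unbounded (otherwise $U\subset J\cdot D$ by (ii), contradicting $V\subset Y-J\cdot D$); however, this alone does not prevent $V$ itself from being a small $J$-bounded fragment chopped off $U$ by enlarging $C$ to $D$. To rule this out I would argue on the quotient side. Because $p(D)=B$, we have $Y-J\cdot D=p^{-1}(\bar{Y}-B)$, and the components of $\bar{Y}-B$ are exactly the unbounded components of $\bar{Y}-p(C)$. Let $B'$ be the component of $\bar{Y}-B$ containing $p(V)$; then $V$ is a component of the open set $p^{-1}(B')$, and $p$ restricts to a covering map $p^{-1}(B')\to B'$. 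Since $B'$ is open, connected, and locally path-connected, every component of $p^{-1}(B')$ maps surjectively onto $B'$ under this restricted covering; hence $p(V)=B'$. As $B'$ is an unbounded component of $\bar{Y}-p(C)$, Lemma~\ref{finite} gives that $V$ is $J$-unbounded, completing the proof.
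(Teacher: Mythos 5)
Your construction of $D$ (taking $B$ to be $p(C)$ together with all bounded components of $J\backslash Y-p(C)$, applying Remark~\ref{ends} for compactness of $B$, and lifting via Lemma~\ref{cover}) is exactly the paper's proof. The paper leaves the verification of the three stated properties implicit, whereas you spell them out --- in particular the covering-space argument that each component of $Y-J\cdot D$ surjects onto an unbounded component of $J\backslash Y-B$ --- and those verifications are correct.
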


\begin{proof}
Let $U$ be a $J$-bounded component of $Y-J\cdot C$. Then $p(U)$ is a bounded
component of $J\backslash Y-p(C)$. Let $B$ be the union of $p(C)$ and all
bounded components of $J\backslash Y-p(C)$. Then $B$ is compact (Remark
\ref{ends}). By Lemma \ref{cover}, there is a compact set $D$ containing $C$
such that $p(D)=B$.
\end{proof}

\begin{lemma}
\label{Int} Suppose $C$ and $D$ are finite subcomplexes of $Y$. Then only
finitely many $J$-unbounded components of $Y-J\cdot C$ intersect $D$.
\end{lemma}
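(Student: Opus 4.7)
The plan is to give a simple local-finiteness argument that does not require the $J$-unbounded hypothesis at all: in fact, at most finitely many components of $Y - J\cdot C$ (bounded or unbounded) can intersect $D$. The key point is that, since $C$ is a finite subcomplex of $Y$ and $J$ acts by cell-preserving covering transformations, $J \cdot C$ is itself a subcomplex of $Y$. Hence $Y - J\cdot C$ is an open subcomplex-complement, whose components are open and each is a union of open cells of $Y$ that do not lie in $J\cdot C$.

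First I would record the structural fact that every open cell $e$ of $Y$ disjoint from $J\cdot C$ lies in exactly one component of $Y - J\cdot C$; this is because open cells are connected and the components form a partition. Next I would use that $D$, being a finite subcomplex, has only finitely many open cells, say $e_1,\ldots,e_n$. If $U$ is a component of $Y - J\cdot C$ with $U \cap D \neq \emptyset$, pick any $x \in U \cap D$. Then $x$ lies in a unique open cell of $Y$; since $D$ is a subcomplex and $x \in D$, that open cell must be one of $e_1,\ldots,e_n$. Call it $e_{i(U)}$. Since $x \in U$ and $e_{i(U)}$ is connected and disjoint from $J\cdot C$, we have $e_{i(U)} \subseteq U$.

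Finally I would observe that components of $Y - J\cdot C$ are pairwise disjoint, so the assignment $U \mapsto e_{i(U)}$ is an injection from the set of components of $Y - J\cdot C$ meeting $D$ into the finite set $\{e_1,\ldots,e_n\}$. In particular, at most $n$ such components exist, which is the desired conclusion (and, a fortiori, only finitely many $J$-unbounded components meet $D$).

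There is no real obstacle here; the only point requiring a moment's care is the verification that $J\cdot C$ is actually a subcomplex, so that its complement decomposes into components that are unions of open cells. This is immediate from the standing assumption that $J$ acts on $Y$ as cell-preserving covering transformations together with the finiteness of $C$.
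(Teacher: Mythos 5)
Your argument is correct and is essentially the paper's own proof: both pick a point of $U\cap D$, pass to the open cell containing it, note that this cell lies entirely in $U$, and conclude by disjointness of components that only finitely many components can occur. The only cosmetic differences are that you phrase it as an explicit injection into the cells of $D$ rather than a contradiction, and you count the finitely many cells of the finite subcomplex $D$ where the paper invokes local finiteness of $Y$ to bound the cells meeting $D$; both also yield the slightly stronger conclusion that \emph{all} components meeting $D$, not just the $J$-unbounded ones, are finite in number.
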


\begin{proof}
Note that $J\cdot C$ is a subcomplex of $Y$. If the lemma is false, then for
each $i\in\mathbb{Z}^{+}$ there are distinct unbounded components $U_{i}$ of
$Y-J\cdot C$ such that $U_{i}\cap D\ne\emptyset$. Choose $u_{i}\in U_{i}\cap
D$. Let $E_{i}$ be an (open) cell containing $u_{i}$. Then $E_{i}\subset
U_{i}$ and the $E_{i}$ are distinct. Then infinitely many cells of $Y$
intersect $D$, contrary to the local finiteness of $Y$.
\end{proof}

\begin{lemma}
\label{trans} Suppose $C$ is a finite subcomplex of $Y$ and $U$ is a
$J$-unbounded component of $Y-J\cdot C$. Then there are infinitely many $j\in
J$ such that $j(U)=U$. In particular the $J$-stabilizer of $U$ is an infinite
subgroup of $J$.
\end{lemma}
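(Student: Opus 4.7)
The strategy is to exploit the unboundedness of the topological boundary $\partial U$, already supplied by Lemma~\ref{Unbd}, and feed it into a double application of the pigeonhole principle.

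First, observe that $\partial U$ decomposes cellularly. Since $C$ is a finite subcomplex and $J$ preserves the cell structure, $J\cdot C$ is a subcomplex of $Y$, hence closed; the component $U$ of the open complement is therefore a union of open cells of $Y$. Its closure $\overline{U}$ is then itself a subcomplex, so $\partial U=\overline{U}\setminus U$ is a union of open cells, each lying in $J\cdot C$ by Lemma~\ref{Unbd}. Because $Y$ is locally finite, every unbounded subset meets infinitely many cells, so $\partial U$ contains infinitely many open cells.

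Next, the finite subcomplex $C$ contributes only finitely many $J$-orbits of cells to $J\cdot C$, so the pigeonhole principle yields a single open cell $c_{0}$ of $C$ together with an infinite sequence of distinct elements $j_{1},j_{2},\ldots\in J$ such that $j_{i}c_{0}\subset\partial U$ for every $i$. Each such $j_{i}c_{0}$ lies in $\overline{U}$, hence in $\overline{f_{i}}$ for some open cell $f_{i}\subset U$. Translating by $j_{i}^{-1}$ produces an open cell $e_{i}:=j_{i}^{-1}f_{i}$ of $Y$ with $c_{0}\subset\overline{e_{i}}$ and $e_{i}\subset j_{i}^{-1}U$. Local finiteness of $Y$ again implies that only finitely many open cells of $Y$ have $c_{0}$ in their closure, so a second pigeonhole argument supplies a single cell $e$ and an infinite subsequence along which $e_{i}=e$.

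Along this subsequence every translate $j_{i}^{-1}U$ contains the open cell $e$; since distinct components of $Y-J\cdot C$ are disjoint, all these translates coincide. Fixing one index $i_{1}$ of the subsequence and letting the remaining indices $i$ range, the elements $j_{i_{1}}j_{i}^{-1}$ are pairwise distinct and each satisfies $j_{i_{1}}j_{i}^{-1}(U)=U$, giving infinitely many elements of $\mathrm{Stab}_{J}(U)$. The most delicate point is the opening cellular analysis: one must cleanly verify that $\partial U$ is actually a union of open cells of $J\cdot C$, since the first pigeonhole is over $J$-orbits of cells of the finite complex $C$; once that is in hand the rest of the argument is essentially a chase of orbit representatives through the two finiteness statements.
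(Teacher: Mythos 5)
Your argument is correct and is essentially the paper's own proof: both exploit the unboundedness of $\partial U$ (Lemma \ref{Unbd}) together with local finiteness of $Y$ to run a double pigeonhole that produces infinitely many distinct elements of $J$ carrying an open cell of $U$ to an open cell of $U$, hence stabilizing the component $U$. The only place you claim more than is strictly available is the opening assertion that $\partial U$ is a union of open cells, each contained in the closure of a \emph{single} open cell of $U$ --- this is fine once $Y$ has been replaced by a complex whose attaching maps have subcomplex images (Lemma \ref{reduce}), but for arbitrary CW attaching maps the paper sidesteps the issue by arguing pointwise (a point $x\in\partial U$ is a limit of a sequence in $U$, and local finiteness forces infinitely many terms, hence $x$ itself, into the closure of one open cell of $U$); running your two pigeonholes on a single point of $c_{0}$ rather than on the whole cell makes your proof go through verbatim in that generality.
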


\begin{proof}
If $x\in\partial U\subset\partial(J\cdot C)$ then any neighborhood of $x$
intersects $U$. Let $x_{1},x_{2},\ldots$ be sequence in $U$ converging to $x$.
By local finiteness infinitely many $x_{i}$ belong to some open cell $D$ of
$U$ and so $x\in\bar D$. By Lemma \ref{Unbd}, there are infinitely many open
cells $D$ of $U$ and distinct $j_{D}\in J$ such that $j_{D}(C)\cap\bar
D\ne\emptyset$. For all such $D$, $j_{D}^{-1}(\bar D)\cap C\ne\emptyset$ and
by the local finiteness of $Y$, there are infinitely many such $D$ with
$j_{D}^{-1}(D)$ all the same. If $j_{D_{1}}^{-1}(D_{1})=j_{D_{2}}^{-1}(D_{2})$
then $j_{D_{2}}j_{D_{1}}^{-1}(D_{1})=D_{2}$ so $j_{D_{2}}j_{D_{1}}^{-1}$
stabilizes $U$.
\end{proof}

\begin{lemma}
\label{conj} Suppose $C$ is a finite subcomplex of $Y$, $U$ is a $J$-unbounded
component of $Y-J\cdot C$ and $S<J$ is the subgroup of $J$ that stabilizes
$U$. Then for any $g\in J$, the stabilizer of $gU$ is $gSg^{-1}$.
\end{lemma}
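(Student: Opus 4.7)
The plan is to give the standard direct double-inclusion argument that is entirely formal group-action bookkeeping; no topology is really needed beyond the observation (implicit in Remark \ref{R2}) that left multiplication by any $j\in J$ permutes the components of $Y-J\cdot C$, since $J\cdot C$ is $J$-invariant.

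First I would record that $gU$ is indeed a component of $Y-J\cdot C$, so its $J$-stabilizer is well-defined. Then, for the inclusion $gSg^{-1}\subseteq \mathrm{Stab}(gU)$, take an arbitrary element $h=gsg^{-1}$ with $s\in S$ and compute
\[
h(gU) \;=\; (gsg^{-1})(gU) \;=\; gs(U) \;=\; gU,
\]
using $s(U)=U$. For the reverse inclusion, suppose $h\in J$ satisfies $h(gU)=gU$. Apply $g^{-1}$ to both sides to obtain
\[
(g^{-1}hg)(U) \;=\; g^{-1}(h(gU)) \;=\; g^{-1}(gU) \;=\; U,
\]
so $g^{-1}hg\in S$, equivalently $h\in gSg^{-1}$.

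There is no real obstacle here: the only thing worth double-checking is that the two $U$'s in the conjugation identity genuinely refer to the same component (not just to some translate of it), but this is immediate from the definition of $S$ as the full stabilizer. Because the argument is so short, I would present it as a couple of displayed equations with one sentence of English explanation for each inclusion.
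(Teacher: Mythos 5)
Your proof is correct and is essentially identical to the paper's: the paper compresses the same reasoning into a single chain of equivalences ($hgU=gU$ iff $g^{-1}hgU=U$ iff $g^{-1}hg\in S$ iff $h\in gSg^{-1}$), which is just your two inclusions written as biconditionals. Nothing is missing.
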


\begin{proof}
Simply observe that $hgU=gU$ if and only if $g^{-1}hgU=U$ if and only if
$g^{-1}hg\in S$ if and only if $h\in gSg^{-1}$.
\end{proof}

\begin{lemma}
\label{subcomp} Suppose $C\subset Y$ is compact and $R_{1}$ is a $J$-unbounded
component of $Y-J\cdot C$. If $D\subset Y$ is compact, and $C\subset D$ then
there is a $J$-unbounded component $R_{2}$ of $Y-J\cdot D$ such that
$R_{2}\subset R_{1}$.
\end{lemma}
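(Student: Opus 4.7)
The plan is to reduce to the setting where every component of the complement is already $J$-unbounded, exhibit a $J$-unbounded piece inside $R_1$, and then promote it back to a component of $Y-J\cdot D$. First I would invoke Lemma \ref{Bded} applied to $D$: this produces a compact set $E\supset D$ such that every $J$-bounded component of $Y-J\cdot D$ sits inside $J\cdot E$ and every component of $Y-J\cdot E$ is $J$-unbounded. This is the key tool, since it hands us a compact set beyond which $J$-boundedness cannot hide.

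Next, since $R_1$ is $J$-unbounded, $R_1$ cannot be contained in $J\cdot E$, so I can pick a point $x\in R_1\setminus J\cdot E$. Let $R^{*}$ be the component of $Y-J\cdot E$ containing $x$; by the choice of $E$ via Lemma \ref{Bded}, $R^{*}$ is $J$-unbounded. Now let $R_2$ be the component of $Y-J\cdot D$ containing $R^{*}$ (which exists because $Y-J\cdot E\subseteq Y-J\cdot D$). I would then verify the two required properties of $R_2$: (i) $R_2\subseteq R_1$, and (ii) $R_2$ is $J$-unbounded.

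For (i), observe that $C\subseteq D$ gives $Y-J\cdot D\subseteq Y-J\cdot C$, so $R_2$ is a connected subset of $Y-J\cdot C$ that meets $R_1$ at $x$; by maximality of components, $R_2\subseteq R_1$. For (ii), if $R_2$ were contained in $J\cdot K$ for some compact $K$, the same would be true of the subset $R^{*}$, contradicting the $J$-unboundedness of $R^{*}$. Hence $R_2$ is a $J$-unbounded component of $Y-J\cdot D$ contained in $R_1$, as required.

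I do not anticipate any serious obstacle: the argument is essentially a compatibility check between the two nested complements, with Lemma \ref{Bded} doing all the real work. The only mild subtlety is remembering that $J$-boundedness is preserved by passing to subsets (which is immediate from the definition) so that the $J$-unboundedness of $R^{*}$ upgrades to that of the larger $R_2$.
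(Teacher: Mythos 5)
Your proof is correct. It does, however, take a genuinely different route from the paper's. The paper works downstairs in the quotient: it picks an unbounded component $V_{2}$ of $J\backslash Y-p(D)$ lying in $p(R_{1})$, uses Lemma \ref{finite} to find a $J$-unbounded component $R_{2}^{\prime}$ of $Y-J\cdot D$ projecting onto $V_{2}$, and then applies a covering transformation to translate $R_{2}^{\prime}$ so that it actually meets $R_{1}$ (the lift need not land inside $R_{1}$ a priori, so this translation step is essential there). You instead stay upstairs and outsource the work to Lemma \ref{Bded}: enlarging $D$ to $E$ so that every component of $Y-J\cdot E$ is $J$-unbounded, choosing $x\in R_{1}\setminus J\cdot E$ (possible precisely because $R_{1}$ is $J$-unbounded), and then passing from the component $R^{*}$ of $Y-J\cdot E$ through $x$ up to the component $R_{2}$ of $Y-J\cdot D$ containing it. Because $x$ is chosen in $R_{1}$ from the outset, the containment $R_{2}\subseteq R_{1}$ is automatic by maximality of components, and no translation is needed; the $J$-unboundedness of $R_{2}$ follows since $J$-boundedness passes to subsets. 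Your argument buys a cleaner containment step at the cost of introducing the auxiliary compact set $E$; the paper's buys self-containedness relative to Lemma \ref{finite} at the cost of the extra translation argument. Since Lemma \ref{Bded} precedes this lemma in the paper, your use of it is legitimate and there is no circularity.
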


\begin{proof}
Choose an unbounded component $V_{2}$ of $J\backslash Y-p(D)$ such that
$V_{2}\subset p(R_{1})$. By Lemma \ref{finite}, there is a component
$R_{2}^{\prime} $ of $Y-J\cdot D$ such that $p(R_{2}^{\prime})=V_{2}$ and so
$R_{2}^{\prime}$ is $J $-unbounded. Choose points $x\in R_{1}$ and $y\in
R_{2}^{\prime}$ such that $p(x)=p(y)\in V_{2}$. Then the covering
transformation taking $y$ to $x$ takes $R_{2}^{\prime}$ to a $J$-unbounded
component $R_{2}$ of $Y-J\cdot D$. As $x\in R_{2}\cap R_{1}$, we have
$R_{2}\subset R_{1}$.
\end{proof}

\section{Finite generation of stabilizers}

\label{SVK}

The following principal result of this section allows us to construct proper rays in $J$-unbounded components of $Y-J\cdot D$ that track corresponding proper rays in a copy of a Cayley graph of the corresponding stabilizer of that component. These geometric constructions are critical to the proof of our main theorem. 

\begin{theorem}
\label{Craig} Suppose $J$ is a finitely generated group acting as cell
preserving covering transformations on the simply connected, 1-ended,
2-dimensional, locally finite CW complex $Y$. Let $p:Y\to J\backslash Y$ be
the quotient map. Suppose $D$ is a connected finite subcomplex of $Y$ such
that the image of $\pi_{1}(p(D))$ in $\pi_{1}(J\backslash Y)$ (under the map
induced by inclusion of $p(D)$ into $J\backslash Y$) generates $\pi
_{1}(J\backslash Y)$. Then for any $J$-unbounded component $V$ of $Y-J\cdot
D$, the stabilizer of $V$ under the action of $J$ is finitely generated.
\end{theorem}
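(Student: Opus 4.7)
The plan is a Seifert--Van Kampen argument, hinging on a covering-space reinterpretation of the hypothesis. Since $p\colon Y\to J\backslash Y$ is the universal cover with deck group $J\cong\pi_{1}(J\backslash Y)$, the components of $p^{-1}(p(D))=J\cdot D$ biject with cosets of $i_{\ast}(\pi_{1}(p(D)))$ in $J$, so the hypothesis is equivalent to the statement that $J\cdot D$ is connected in $Y$.

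Next I identify $S=\mathrm{Stab}_{J}(V)$ homotopy-theoretically. Set $W=p(V)$; by Lemma \ref{finite} this is an unbounded component of $(J\backslash Y)-p(D)$. Pick $v_{0}\in V$ and $w_{0}=p(v_{0})$. Any loop $\alpha$ in $W$ at $w_{0}$ lifts to a path in $p^{-1}(W)$, which is a disjoint union of $J$-translates of $V$; by continuity the lift stays in $V$ and ends at $[\alpha]\cdot v_{0}\in V$, forcing $[\alpha]\in S$. Conversely, any $s\in S$ arises this way via a path in $V$ from $v_{0}$ to $sv_{0}$. Hence $S$ equals the image of $\pi_{1}(W,w_{0})\to J$, and it suffices to exhibit a finitely generated subgroup of $\pi_{1}(W,w_{0})$ whose image is $S$.

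The crucial combinatorial finiteness is that $p(D)$ is a finite subcomplex of the connected, locally finite $J\backslash Y$, which forces $(J\backslash Y)-p(D)$ to have only finitely many components $W=W_{1},\ldots,W_{m}$, and each $\overline{W_{i}}\cap p(D)$ is a finite subcomplex with finitely many connected components $B_{i,1},\ldots,B_{i,k_{i}}$, each with finitely generated fundamental group. Now use the surjectivity: given $s\in S$ represented by a loop $\alpha$ in $W$, pick a loop $\beta$ in $p(D)$ with $[\beta]=s$ in $J$ and a path $\gamma$ from $w_{0}$ into $p(D)$; the based homotopy $\alpha\sim\gamma\beta\gamma^{-1}$ in $J\backslash Y$ lifts to a map $h\colon I\times I\to Y$ because $Y$ is simply connected. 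After a cellular approximation (enabled by Lemma \ref{reduce}), the preimage $h^{-1}(J\cdot D)$ is a finite graph that cuts $I\times I$ into finitely many regions; tracing the decomposition of $\alpha$ along the boundaries of these regions decomposes $s$ into a finite product of elements of the form $\sigma\eta\sigma^{-1}$, where each $\sigma$ is a path in $W$ from $w_{0}$ to some $B_{1,j}$ and each $\eta$ is a generator of the finitely generated group $\pi_{1}(B_{1,j})$.

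The main obstacle is controlling the conjugating paths $\sigma$ so that only finitely many homotopy classes in $W$ arise across all $s\in S$, which is what ensures the generating set is finite. The edge inclusions $\pi_{1}(B_{1,j})\hookrightarrow\pi_{1}(W)$ may fail to be injective, so a direct appeal to Bass--Serre theory is unavailable; instead I plan to exploit the uniform combinatorial bound on $h^{-1}(J\cdot D)$ given by cellular approximation in the simplicial $Y$ supplied by Lemma \ref{reduce}, together with local finiteness of $Y$. This bound depends only on the finite complex $p(D)$, producing a finite list of generators valid for all $s\in S$ and completing the proof.
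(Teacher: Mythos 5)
Your setup is sound and matches the paper's: identifying $S=\mathrm{Stab}_J(V)$ with the image of $\pi_1(p(V),w_0)\to\pi_1(J\backslash Y)=J$ is exactly the reduction the paper makes, after which everything happens downstairs in the quotient. But the core of the argument --- why that image is finitely generated --- has a genuine gap, and you have in fact pointed at it yourself. The claim that cellular approximation gives a ``uniform combinatorial bound on $h^{-1}(J\cdot D)$ \dots\ depending only on the finite complex $p(D)$'' is false: the homotopy $h$ realizing $\alpha\simeq\gamma\beta\gamma^{-1}$ depends on the element $s$, and as $s$ ranges over $S$ the required homotopies (and hence the subdivisions of $I\times I$ and the number of regions in $h^{-1}(J\cdot D)$) grow without bound. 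Consequently nothing in your argument controls the homotopy classes in $W$ of the conjugating paths $\sigma$, which is precisely the ``main obstacle'' you name; identifying the obstacle and announcing a plan to exploit a bound that does not exist is not a proof. (A smaller issue: your proposed generators $\sigma\eta\sigma^{-1}$ with $\eta$ a loop in $B_{1,j}\subseteq p(D)$ are not loops in $W$, so even granting the decomposition you would still need to push them off $p(D)$ into $W$ before concluding anything about the image of $\pi_1(W)$.)

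The paper closes this gap by an algebraic device rather than a combinatorial bound. Working in $X=J\backslash Y$ with $K=p(D)$ and $U=p(V)$ (Theorem \ref{FG}), it takes $Z$ to be the largest subcomplex of $X$ contained in $U$ (a strong deformation retract of $U$ by Lemma \ref{SDR}), and $Q_3$ to be the union of $X-U$ with a connected finite ``collar'' $Q_2$ of cells meeting both $U$ and $K$, arranged so that $Q_3\cap Z=Q_2\cap Z$ is a connected finite complex. Van Kampen (Theorem \ref{VKP}) gives $\pi_1(X)\cong A_1\ast_{A_0}A_2$ for the images $A_0,A_1,A_2$ of $\pi_1(Q_3\cap Z),\pi_1(Q_3),\pi_1(Z)$ in $\pi_1(X)$. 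Since $K\subseteq Q_3$ and $K$ carries $\pi_1(X)$ by hypothesis, $A_1=\pi_1(X)$, and uniqueness of normal forms in amalgamated products then forces $A_2=A_0$. Thus the image of $\pi_1(U)$ is generated by loops in the single connected finite complex $Q_3\cap Z$, with one basepoint and no family of conjugators to control. If you want to salvage your approach you would need to prove something equivalent to this collapse $A_2=A_0$; the amalgamated-product normal form is the missing idea.
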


By Lemma \ref{reduce} and Remark \ref{HE} we may assume that $Y$ is
simplicial. Theorem 6.2.11\cite{G} is a cellular version of van Kampen's
theorem. The following is an application of that theorem.

\begin{theorem}
\label{VKP} Suppose $X_{1}$ and $X_{2}$ are path connected subcomplexes of a
path connected CW complex $X$, such that $X_{1}\cup X_{2}$=$X$, and $X_{1}\cap
X_{2}=X_{0}$ is non-empty and path connected. Let $x_{0}\in X_{0}$. For
$i=0,1,2$ let $A_{i}$ be the image of $\pi_{1}(X_{i}, x_{0})$ in $\pi
_{1}(X,x_{0})$ under the map induced by inclusion of $X_{i}$ into $X$. Then
$\pi_{1}(X,x_{0})$ is isomorphic to the amalgamated product $A_{1}\ast_{A_{0}}
A_{2}$.
\end{theorem}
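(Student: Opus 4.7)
The plan is to derive this from the standard (cellular) van Kampen theorem --- Theorem 6.2.11 of \cite{G}, cited just before the statement --- and then ``collapse'' the two free factors onto their images in $\pi_1(X, x_0)$. The classical version supplies the pushout presentation
$$\pi_1(X, x_0) \;\cong\; \pi_1(X_1, x_0) *_{\pi_1(X_0, x_0)} \pi_1(X_2, x_0),$$
and the task is to verify that quotienting by the kernels $K_i := \ker(\pi_1(X_i) \to \pi_1(X))$ turns the amalgamation over $\pi_1(X_0, x_0)$ into an amalgamation over $A_0$.

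First I would fix notation: let $\phi_i : \pi_1(X_i, x_0) \to \pi_1(X, x_0)$ be the inclusion-induced map, so $A_i = \phi_i(\pi_1(X_i, x_0))$ and $A_i \cong \pi_1(X_i)/K_i$. Let $j_i : \pi_1(X_0) \to \pi_1(X_i)$ for $i = 1,2$ and $\iota_i : A_0 \hookrightarrow A_i$ be the inclusion-induced maps; the identity $\phi_0 = \phi_i \circ j_i$ ensures $A_0 \subseteq A_i$ inside $\pi_1(X, x_0)$, so the amalgamated product $A_1 *_{A_0} A_2$ is well-defined. These maps assemble into the factorization
$$\pi_1(X_1, x_0) * \pi_1(X_2, x_0) \;\xrightarrow{\;\pi\;}\; A_1 * A_2 \;\xrightarrow{\;\psi\;}\; \pi_1(X, x_0),$$
where $\pi$ is induced by the quotients $\pi_1(X_i) \twoheadrightarrow A_i$ and $\psi$ by the inclusions $A_i \hookrightarrow \pi_1(X)$. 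By the classical van Kampen presentation, $\psi\pi$ is surjective with $\ker(\psi\pi) = N$ equal to the normal closure in $\pi_1(X_1) * \pi_1(X_2)$ of the set $\{j_1(\gamma)\, j_2(\gamma)^{-1} : \gamma \in \pi_1(X_0)\}$.

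The main step is to identify $\ker \psi$. Since $\pi$ is surjective, a direct check gives $\ker \psi = \pi(N)$; and because $\pi$ is a surjective homomorphism, $\pi(N)$ equals the normal closure in $A_1 * A_2$ of the $\pi$-image of the generating set of $N$. That image is precisely $\{\iota_1(a)\, \iota_2(a)^{-1} : a \in A_0\}$ --- here one uses that $A_0 = \phi_0(\pi_1(X_0))$, so every $a \in A_0$ has the form $\phi_0(\gamma)$ for some $\gamma$, together with $\pi(j_i(\gamma)) = \iota_i(\phi_0(\gamma))$. Since that relation set is the standard presentation of $A_1 *_{A_0} A_2$ (with $A_0$ included in each $A_i$ via $\iota_i$), we conclude $\pi_1(X, x_0) \cong (A_1 * A_2)/\ker\psi \cong A_1 *_{A_0} A_2$.

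The only real subtlety to flag is notational rather than substantive: in the conclusion $A_0$ must be read as the image $\phi_0(\pi_1(X_0, x_0))$, embedded as a subgroup of each $A_i$ via $\iota_i$, and \emph{not} as the intersection $A_1 \cap A_2$, which in general can be strictly larger. Once that interpretation is fixed, the remainder is a formal diagram chase resting on the standard fact that a surjective homomorphism carries the normal closure of a set to the normal closure of its image.
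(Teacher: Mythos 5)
Your proposal is correct and is exactly the route the paper intends: the paper offers no written proof, simply declaring the theorem "an application" of the cellular van Kampen theorem (Theorem 6.2.11 of \cite{G}), and your argument supplies precisely the omitted details of that application (passing to the images $A_i$ and checking that the surjection carries the normal closure of the van Kampen relators to the normal closure of $\{\iota_1(a)\iota_2(a)^{-1}: a\in A_0\}$). Your closing caveat---that $A_0$ must be read as $\phi_0(\pi_1(X_0,x_0))$ with the amalgamation taken over the subgroup inclusions, not as $A_1\cap A_2$---is a worthwhile clarification of the statement.
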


\begin{theorem}
\label{FG} Suppose that $X$ is a connected locally finite 2-dimensional
simplicial complex. If $K$ is a finite subcomplex of $X$ such that the
inclusion map $i:K\hookrightarrow X$ induces an epimorphism on fundamental
group and $U$ is an unbounded component of $X-K$ then the image of $\pi
_{1}(U)$ in $\pi_{1}(X)$, under the map induced by the inclusion of $U$ into
$X$ is a finitely generated group.
\end{theorem}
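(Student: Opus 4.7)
The plan is to apply the van Kampen theorem (Theorem~\ref{VKP}) to a subcomplex decomposition $X = Y_1 \cup Y_2$, where $Y_1 = \bar U$ is the closure of $U$ in $X$ (a subcomplex since $X$ is simplicial) and $Y_2 = L$ is the complementary subcomplex consisting of all simplices $\sigma$ with $\sigma^\circ \not\subseteq U$, modified by adjoining finitely many edge-path arcs through $U$ to restore path-connectedness. Then $\bar U \cup L = X$ and $\bar U \cap L = \partial U$, a finite subcomplex of $K$. I may assume $K$ is path-connected (else pass to the basepoint component, whose image still equals $\pi_1(X)$); then $L$ is path-connected, since the closure of every other component of $X \setminus K$ meets $K$.

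Since $\partial U$ is finite, it has finitely many components $B_1,\dots,B_k$. I would fix a vertex $y_0 \in U$, and for each $i$ an edge-path $\alpha_i \subset U$ from $y_0$ to a vertex of $B_i$ (possible by path-connectedness of $U$); set $\alpha = \bigcup_i \alpha_i$, $Y_1 = \bar U$, $Y_2 = L \cup \alpha$. Then $Y_1$, $Y_2$, and $Y_1 \cap Y_2 = \partial U \cup \alpha$ are all path-connected subcomplexes containing $y_0$, with $Y_1 \cup Y_2 = X$. Theorem~\ref{VKP} then yields $\pi_1(X, y_0) \cong A_1 *_{A_0} A_2$, with $A_j$ the image of $\pi_1(Y_j, y_0)$ in $\pi_1(X, y_0)$.

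Since $K \subseteq L \subseteq Y_2$ is connected to $y_0$ inside $Y_2$ by $\alpha_1$ and $\pi_1(K) \to \pi_1(X)$ is surjective by hypothesis, $A_2 = \pi_1(X, y_0)$. Because the amalgam $A_1 *_{A_0} A_2$ equals $A_2$, the subgroup $A_1$ embeds into $A_2$, and then $A_1 = A_1 \cap A_2 = A_0$ by the defining amalgam identity. The group $A_0$ is the image of $\pi_1$ of the finite complex $\partial U \cup \alpha$ and is therefore finitely generated, so $A_1 =$ image of $\pi_1(\bar U, y_0) \to \pi_1(X, y_0)$ is finitely generated.

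The final step identifies $A_1$ with the image of $\pi_1(U, y_0) \to \pi_1(X, y_0)$. Since $y_0 \in U$, $\bar U$ has dimension at most $2$, and $\partial U$ is $1$-dimensional with each of its simplices a proper face of some simplex of $\bar U$ whose interior lies in $U$, a general-position push-off should homotope any loop in $\bar U$ based at $y_0$ within $\bar U$ to a loop in $U$: at each transverse crossing with $\partial U$, push the loop locally into the interior of a $2$-simplex of $\bar U$ whose interior lies in $U$. The main obstacle is carrying out this push-off carefully so that the perturbation stays inside $\bar U$ rather than drifting into another component of $X \setminus K$; the key local observation is that any $2$-simplex of $X$ attached to an edge $e$ with $e^\circ \subset U$ has its interior in the same component $U$, so the required $2$-simplices of $\bar U$ adjacent to $\partial U$ are always available. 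Once this identification is in place, the image of $\pi_1(U)$ equals $A_1$ and is finitely generated.
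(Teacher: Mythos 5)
Your van Kampen decomposition and the amalgam argument ($A_2=\pi_1(X)$ forces $A_1=A_1\cap A_2=A_0$, which is finitely generated because $Y_1\cap Y_2=\partial U\cup\alpha$ is a finite complex) are sound, and they do prove that the image of $\pi_1(\bar U)$ in $\pi_1(X)$ is finitely generated. The gap is exactly the step you flag as "the main obstacle": the image of $\pi_1(U)$ need not equal the image of $\pi_1(\bar U)$, and the proposed push-off of loops from $\bar U$ into $U$ does not exist in general. Concretely, let $W$ be a triangulation of $\mathbb{R}^2$, let $v\ne w$ be vertices with disjoint stars, and let $X=W/(v\sim w)$, so $\pi_1(X)\cong\mathbb{Z}$. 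Take $K$ to be the image of a simplicial arc $P$ in $W$ from $v$ to $w$; then $K$ is a circle carrying the generator of $\pi_1(X)$, so $\pi_1(K)\to\pi_1(X)$ is onto, and $U=X-K\cong W-P$ is the unique (unbounded) component of $X-K$. Here $\bar U=X$, so the image of $\pi_1(\bar U)$ is all of $\mathbb{Z}$; but $\pi_1(U)\cong\mathbb{Z}$ is generated by a small loop encircling $P$, which is null-homotopic in $W$ and hence in $X$, so the image of $\pi_1(U)$ in $\pi_1(X)$ is trivial. Thus the loop $K\subseteq\partial U$ cannot be homotoped into $U$ by any homotopy in $\bar U$ or even in $X$. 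Since all you get a priori is that the image of $\pi_1(U)$ is a subgroup of the finitely generated group $A_1$, and subgroups of finitely generated groups need not be finitely generated, the conclusion does not follow.

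The paper avoids this by working from the inside rather than the outside: after a barycentric subdivision arranged so that every simplex with all vertices in $K$ lies in $K$, it takes $Z$ to be the largest subcomplex of $X$ contained in $U$ and proves (Lemma \ref{SDR}) that $Z$ is a strong deformation retract of $U$, so the images of $\pi_1(U)$ and $\pi_1(Z)$ in $\pi_1(X)$ genuinely coincide; it then runs the same van Kampen/normal-form argument with the pieces $Z$ and $Q_3=Q_2\cup(X-U)$, where $Q_2$ is a finite connected complex containing every simplex that meets both $U$ and $K$. Retracting the open set onto an interior subcomplex works; pushing loops from the closure into the interior does not, and that asymmetry is the whole point of the paper's setup. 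A smaller issue: your reduction to connected $K$ by passing to the basepoint component is not legitimate as stated, since shrinking $K$ changes the components of the complement and your $U$ need not be a component of $X$ minus the smaller set; but this reduction is also unnecessary, because each component of $L$ must meet $\partial U$ and is therefore already joined to the basepoint by one of your arcs $\alpha_i$.
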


\begin{proof}
If $V$ is a bounded component of $X-K$ then $V\cup K$ is a finite subcomplex
of $X$. So without loss, assume that each component of $X-K$ is unbounded. If
$e$ is edge in $X-K$ and both vertices of $e$ belong to $K$, then by
baracentric subdivision, we may assume that each open edge in $X-K$ has at
least one vertex in $X-K$. Equivalently, if both vertices of an edge belong to
$K$, then the edge belongs to $K$. If $T$ is a triangle of $X$ and each vertex
of $T$ belongs to $K$, then each edge belongs to $K$, and $T$ belongs to $K$
(otherwise the open triangle of $T$ is a bounded component of $X-K$).

The \textit{largest} subcomplex $Z$ of $X$ contained in a component $U$ of
$X-K$ contains all vertices of $X$ that are in $U$, all edges each of whose
vertices are in $U$, and all triangles each of whose vertices are in $U$.

\begin{lemma}
\label{SDR} Suppose that $U$ is a component of $X-K$ and $Z$ is the largest
subcomplex of $X$ contained in $U$. Then $Z$ is a strong deformation retract
of $U$. In particular, $Z$ is connected.
\end{lemma}

\begin{proof}
If $e$ (resp. $T$) is an open edge (resp. triangle) of $X$ that is a subset of
$U$, but not of $Z$, then some vertex of $e$ (respectively $T$) belongs to $K$
and some vertex of $e$ (resp. $T$) belongs to $Z$. Say $e$ has vertices $v$
and $w$ and $v\in Z$ and $w\in K$ then clearly $[v,w)$ linearly strong
deformation retracts to $v$. If $T$ is a triangle of $X$ with vertices $v,w\in
Z$ and $u\in K$ then for each point $p\in[v,w]$ the linear strong deformation
retraction from of $(u,p]$ to $p$ agrees with those defined for $(u,v]$ and
$(u,w]$ and defines a strong deformation for the triangle $[v,w,u]-\{u\}$ to
the edge $[v,w]$. Similarly if $v\in Z$ and $u, w\in K$. Combining these
deformation retractions gives a strong deformation retraction of $U$ to $Z$.
\end{proof}

Suppose that $U$ is a component of $X-K$ and $Z$ is the largest subcomplex of
$X$ contained in $U$. Let $Q_{1}$ be the (finite) subcomplex of $X$ consisting
of all edges and triangles that intersect both $U$ and $K$ (and hence
intersect both $Z$ and $K$). By Lemma \ref{SDR} we may add finitely many edges
in $Z$ to $Q_{1}$ so that the resulting complex $Q_{2}$, and $Q_{2}\cap Z$ are
connected. The complex $Q_{3}= Q_{2}\cup(X-U)$ is a connected subcomplex of
$X$.

The subcomplexes $Q_{3}$ and $Z$ are connected and cover $X$, and $Q_{3}\cap
Z=Q_{2}\cap Z$ is a non-empty connected finite subcomplex of $X$. Let $A_{0}$,
$A_{1}$ and $A_{2}$ be the image of $\pi_{1}(Q_{3}\cap Z)$, $\pi_{1}(Q_{3})$
and $\pi_{1}(Z) $ respectively in $\pi_{1}(X)$ under the homomorphism induced
by inclusion. By Theorem \ref{VKP}, $\pi_{1}(X)$ is isomorphic to the
amalgamated product $A_{1}\ast_{A_{0}}A_{2}$. Now as $K\subset Q_{3}$, $A_{1}
=\pi_{1}(X)$. But then normal forms in amalgamated products imply that
$A_{2}=A_{0}$. As $Q_{3}\cap Z$ is a finite complex, $A_{0}$ and hence $A_{2}$
is finitely generated. This completes the proof of Theorem \ref{FG}.
\end{proof}

Suppose $J$ is a finitely generated group acting on a simply connected
2-dimensional simplicial complex $Y$ and let $K$ be a finite subcomplex of
$J\backslash Y$ such that the image of $\pi_{1}(K)$ under the homomorphism
induced by the inclusion map of $K$ into $J\backslash Y$, generates $\pi
_{1}(J\backslash Y)$. Let $D$ be a finite subcomplex of $Y$ that projects onto
$K$ so that $p^{-1}(K)=J\cdot D$. Let $X_{1}$ be an unbounded component of
$J\backslash Y-K$. The number of $J$-unbounded components of $Y-J\cdot D$ that
project to $X_{1}$ is the index of the image of $\pi_{1}(X_{1})$ in $\pi
_{1}(J\backslash Y)=J$ under the homomorphism induced by inclusion; and the
stabilizer of such a $J$-unbounded component is isomorphic to the image of
$\pi_{1}(X_{1})$ in $\pi_{1}(J\backslash Y)=J$ under the homomorphism induced
by inclusion. Hence Theorem \ref{Craig} is a direct corollary of Theorem
\ref{FG}.

\section{A bijection between $J$-bounded ends and stabilizers}

\label{bij} As usual $J^{0}$ is a finite generating set for an infinite group
$J$ which acts as covering transformations on a 1-ended simply connected
locally finite 2-dimensional CW complex $Y$. Assume that $C$ is a finite
subcomplex of $Y$ and $U$ is a $J$-unbounded component of $Y-J\cdot C$. The
main result of this section connects the ends of the $J$-stabilizer of $U$ to
the $J$-bounded ends of $U$ (and allows us to construct the $r$ and $s$ rays
in Figure 6). Recall $z:(\Lambda(J,J^{0}),1)\to(Y,\ast)$ and $K$ is an integer
such that for each edge $e$ of $\Lambda$, $z(e)$ is an edge path of length
$\leq K$.

\begin{lemma}
\label{Jends2} Suppose $C$ and $D$ are finite subcomplexes of $Y$, $U$ is a
$J$-unbounded component of $Y-J\cdot C$ and some vertex of $J\cdot D$ belongs
to $U$. Let $S$ be the $J$-stabilizer of $U$. Then there is an integer
$N_{\ref{Jends2}}(U,C,D)$ such that for each vertex $v\in U\cap(J\cdot D)$
there is an edge path of length $\leq N$ from $v$ to $S\ast$ and for each
element $s\in S$ there is an edge path of length $\leq N$ from $s\ast$ to a
vertex of $U\cap(J\cdot D)$.
\end{lemma}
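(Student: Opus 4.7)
The strategy is to exploit two facts: by Lemma~\ref{Int}, only finitely many $J$-unbounded components of $Y-J\cdot C$ meet the finite complex $D$; and each element of $J$ acts cellularly, hence as an isometry on the $1$-skeleton of $Y$ with respect to the edge-path metric. These two observations, together with the hypothesis that some vertex of $J\cdot D$ lies in $U$, should yield a uniform bound $N$ by taking a finite maximum over a finite collection of translators.

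Concretely, I would first set $M=\max\{d(\ast,d):d\text{ is a vertex of }D\}$, which is finite because $D$ is a finite subcomplex. Next, let $V_1,\ldots,V_k$ be the (finitely many, by Lemma~\ref{Int}) $J$-unbounded components of $Y-J\cdot C$ that intersect $D$, and let $V_{i_1},\ldots,V_{i_m}$ be those lying in the $J$-orbit of $U$. The hypothesis of the lemma guarantees $m\ge 1$: if $v_0\in U\cap(J\cdot D)$ and we write $v_0=j_0 d_0$ with $d_0$ a vertex of $D$, then $d_0\in j_0^{-1}U\cap D$ and $j_0^{-1}U$ is a $J$-translate of $U$ meeting $D$. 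For each $\ell$, fix $j_\ell\in J$ with $j_\ell V_{i_\ell}=U$ and a vertex $d_\ell\in V_{i_\ell}\cap D$. Finally, set $M'=\max_\ell d(\ast,j_\ell\ast)$ and $N=M+M'$.

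To verify the first conclusion, given $v\in U\cap(J\cdot D)$ write $v=jd$ with $d$ a vertex of $D$; then $d=j^{-1}v\in j^{-1}U$, so $j^{-1}U=V_{i_\ell}$ for some $\ell$, and $s:=jj_\ell^{-1}$ stabilizes $U$ since $sU=jj_\ell^{-1}(j_\ell V_{i_\ell})=jV_{i_\ell}=jj^{-1}U=U$. Using that $j$ and $j_\ell$ act as isometries on the $1$-skeleton,
\[
d(v,s\ast)\le d(v,j\ast)+d(j\ast,jj_\ell^{-1}\ast)=d(d,\ast)+d(j_\ell\ast,\ast)\le M+M'=N.
\]
For the second conclusion, given $s\in S$ and any fixed pair $(j_\ell,d_\ell)$, the vertex $sj_\ell d_\ell$ lies in $sj_\ell V_{i_\ell}\cap(J\cdot D)=U\cap(J\cdot D)$, and the same isometry argument gives $d(s\ast,sj_\ell d_\ell)=d(\ast,j_\ell d_\ell)\le d(\ast,j_\ell\ast)+d(\ast,d_\ell)\le M'+M=N$. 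The only substantive input is the finiteness supplied by Lemma~\ref{Int}, which is exactly what makes the maximum $M'$ well-defined; no other serious obstacle is anticipated, and the main care is simply in checking that the assignment $v\mapsto s$ sends $v$ to an element of the stabilizer $S$.
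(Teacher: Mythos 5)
Your argument is correct, and at its core it is the paper's argument in different clothing: write $v\in U\cap(J\cdot D)$ as $v=jd$ with $d$ a vertex of $D$, observe that the ``difference'' between the translator $j$ and a fixed reference translator stabilizes $U$, and obtain $N$ as a finite maximum. The difference lies in the finiteness device. The paper does not invoke Lemma \ref{Int}; it normalizes so that $\ast\in D$ and $D$ is connected, attaches to each $v$ a short path $\alpha_v$ to a vertex $w_v\ast\in J\ast$, and partitions $U\cap(J\cdot D)$ into finitely many classes according to the vertex $w_v^{-1}v$ (finitely many possibilities by local finiteness of $Y$ near $\ast$); the stabilizing element is $w_vw_u^{-1}$ for a distinguished representative $u$, and the second leg of the connecting path is routed through the Cayley graph $\Lambda(J,J^0)$ via $z$, giving the bound $N_1K+A$. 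You instead classify $v$ by which translate $j^{-1}U$ of $U$ contains $d$, with Lemma \ref{Int} supplying the finiteness, and you work entirely in the edge-path metric of $Y$. Your invariant is the coarser and arguably more natural one, since $j^{-1}U=(j')^{-1}U$ is exactly the condition needed for $j(j')^{-1}\in S$; the paper's version has the advantage of being self-contained (it needs only local finiteness, not the earlier lemma).

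One small imprecision: you ``fix a vertex $d_\ell\in V_{i_\ell}\cap D$'' for every $\ell$, but a component of $Y-J\cdot C$ can meet the finite subcomplex $D$ without containing any vertex of $D$ (for instance, meeting $D$ only in the interior of an edge whose endpoints both lie in $J\cdot C$). This is harmless: for the first conclusion the only components that arise are those of the form $j^{-1}U$ containing the vertex $d=j^{-1}v$ of $D$, so you may simply restrict your list to those $V_{i_\ell}$ that contain a vertex of $D$; for the second conclusion you need only one pair $(j_\ell,d_\ell)$, and the hypothesis that some vertex of $J\cdot D$ lies in $U$ provides it, exactly as you observe.
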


\begin{proof}
Without loss, assume that $\ast\in D$ and $D$ is connected. Let $A$ be an
integer such that any two vertices in $D$ can be connected by an edge path of
length $\leq A$. For each vertex $v$ of $U\cap(J\cdot D)$ let $\alpha_{v}$ be
a path of length $\leq A$ from $v$ to a vertex $w_{v}\ast$ of $J\ast$. The
covering transformation $w_{v}^{-1}$ takes $\alpha_{v}$ to an edge path ending
at $\ast$ and of length $\leq A$. The vertices of $U\cap(J\cdot D)$ are
partitioned into a finite collection of equivalence classes, where $v$ and $u
$ are related if $w_{v}^{-1}(\alpha_{v})$ and $w_{u}^{-1}(\alpha_{u})$ have
the same initial point. Equivalently, $w_{v}w_{u}^{-1}u=v$. In particular,
$u\sim v$ implies $w_{v}w_{u}^{-1}\in S$. Let $d_{\Lambda}$ denote edge path
distance in the Cayley graph $\Lambda(J, J^{0})$ and $|g|_{\Lambda}%
=d_{\Lambda} (1,g)$. Note that, as vertices of $\Lambda$:
\[
d_{\Lambda}(w_{v}w_{u}^{-1}, w_{v}) = |w_{u}|_{\Lambda}%
\]
For each (of the finitely many) equivalence class of vertices in $U\cap(J\cdot
D)$, distinguish $u$ in that class. Let $N_{1}$ be the largest of the numbers
$|w_{u}|_{\Lambda}$ (over the distinguished $u$). If $u$ is distinguished and
$v\sim u$ then let $\beta$ be an edge path in $\Lambda$ of length $\leq N_{1}$
from $w_{v}$ to $w_{v}w_{u}^{-1}$. Then $z\beta$ (from $w_{v}\ast$ to
$w_{v}w_{u}^{-1}\ast\in S\ast$) has length $\leq KN_{1}$. The path
$(\alpha_{v}, z\beta)$ (from $v$ to $w_{v}w_{u}^{-1}\ast\in S\ast$) has length
$\leq N_{1}K+A$.

Let $\alpha$ be an edge path from $\ast$ to a vertex of $U\cap(J\cdot D)$.
Then for each $s\in S$, $s(\alpha)$ is an edge path from $s\ast$ to a vertex
of $U\cap(J\cdot D)$. Let $N_{2}=|\alpha|$ then let $N$ be the largest of the
integers $N_{1}K+A$ and $N_{2}$.
\end{proof}

\begin{remark}
Assume we are in the setup of Lemma \ref{Jends2}. Suppose $g\in J$. Then each
vertex of $(gU)\cap(J\cdot D)$ is within $N$ of a vertex of $gS\ast$ and
within $N+|g|K$ of $gSg^{-1}\ast$ (as $d_{\Lambda}(gs, gsg^{-1})=|g^{-1}|$),
where by Lemma \ref{conj}, $gSg^{-1}$ stabilizes $gU$. Also, each vertex of
$gS\ast$ is within $N$ of a vertex of $(gU)\cap(J\cdot D)$ and each vertex of
$gSg^{-1}\ast$ is within $N+|g|K$ of a vertex of $(gU)\cap(J\cdot D)$. By
Lemma \ref{finite} there are only finitely many $J$-unbounded components of
$Y-J\cdot C$ up to translation in $J$. Hence finitely many integers $N$ cover
all cases.
\end{remark}

If $C\subset E$ are compact subsets of $Y$ and $U$ a $J$-unbounded component
of $Y-J\cdot C$, let $\mathcal{E}(U,E)$ be the set of equivalence classes of
$J$-bounded proper edge path rays of $U\cap(J\cdot E)$, where two such rays
$r$ and $s$ are equivalent if for any compact set $F$ in $Y$ there is an edge
path from a vertex of $r$ to a vertex of $s$ with image in $(U\cap(J\cdot
E))-F$. If $X$ is a connected locally finite CW complex, let $\mathcal{E}(X)$
be the set of ends of $X$. In the next lemma it is not necessary to factor the
map $m$ through $z:\Lambda(J,J^{0})\to Y$ in order to be true, but for our
purposes, it is more applicable this way. For a 2-dimensional CW complex $X$ and
subcomplex $A$ of $X$, let $A_{1}$ be the subcomplex comprised of $A$, union
all vertices connected by an edge to a vertex of $A$, union all edges with at
least one vertex in $A$. Let $St(A)$ be $A_{1}$ union all 2-cells whose
attaching maps have image in $A_{1}$. Inductively define $St^{n}%
(A)=St(St^{n-1}(A))$ for all $n>1$. The next lemma is a standard result that
we will employ a number of times.

\begin{lemma}
\label{kill} Suppose $L$ is a positive integer, then there is an integer
$M(L)$ such that if $\alpha$ is an edge path loop in $Y$ of length $\leq L$
and $\alpha$ contains a vertex of $J\ast$, then $\alpha$ is homotopically
trivial in $St^{M(L)}(v)$ for any vertex $v$ of $\alpha$.
\end{lemma}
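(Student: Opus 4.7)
The plan is to reduce to a finite check by exploiting the $J$-action and the local finiteness of $Y$, then propagate the resulting bound to an arbitrary vertex of the loop.

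First I would observe that because $Y$ is locally finite, only finitely many edges meet the basepoint $\ast$, and iterating this gives that only finitely many edge paths of length $\leq L$ start at $\ast$; in particular there are only finitely many edge path loops of length $\leq L$ based at $\ast$. Call this finite set $\mathcal{L}(L)$. Since $Y$ is simply connected, each $\alpha_0 \in \mathcal{L}(L)$ bounds a singular disc, and by compactness of that disc together with local finiteness of $Y$ there is an integer $M_{\alpha_0}$ such that $\alpha_0$ is null-homotopic in $St^{M_{\alpha_0}}(\ast)$. Let
\[
M_0 \;=\; \max_{\alpha_0 \in \mathcal{L}(L)} M_{\alpha_0},
\]
which is finite since $\mathcal{L}(L)$ is finite.

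Next I would use the hypothesis that $\alpha$ contains a $J$-vertex. Pick $j \in J$ with $j\ast$ lying on $\alpha$, and reparametrize $\alpha$ so that it begins and ends at $j\ast$. Then $j^{-1}\alpha$ is an edge path loop based at $\ast$ of length $\leq L$ (the action of $J$ is cellular and hence an isometry on edge-path length), so $j^{-1}\alpha \in \mathcal{L}(L)$. By the choice of $M_0$, $j^{-1}\alpha$ is null-homotopic in $St^{M_0}(\ast)$. Applying the covering transformation $j$ and using the $J$-equivariance of the star operation (i.e.\ $j\cdot St^n(A)=St^n(jA)$), we conclude that $\alpha$ is null-homotopic in $St^{M_0}(j\ast)$.

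Finally I would pass from $j\ast$ to an arbitrary vertex $v$ of $\alpha$. Since $\alpha$ is an edge path loop of length $\leq L$ containing both $v$ and $j\ast$, we have $d(v, j\ast) \leq L$, so $j\ast \in St^L(v)$ and consequently
\[
St^{M_0}(j\ast) \;\subseteq\; St^{M_0+L}(v).
\]
Setting $M(L) = M_0 + L$ gives the required integer, independent of $\alpha$ and of $v$. The only nontrivial point is the initial reduction to a finite family of based loops; everything after that is formal manipulation of the $J$-action and of the definition of $St^n$. There is no genuine obstacle, just the need to make the finiteness/equivariance bookkeeping explicit.
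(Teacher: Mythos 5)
Your proof is correct and follows essentially the same route as the paper: use local finiteness to reduce to the finitely many based loops of length $\leq L$ at $\ast$, get a uniform $M_0$ from simple connectivity, and then absorb the translation by $J$ and the shift from $j\ast$ to an arbitrary vertex $v$ via the inclusion $St^{M_0}(j\ast)\subseteq St^{M_0+L}(v)$. The only cosmetic difference is the order of operations (you translate first and then re-center at $v$, while the paper re-centers at $v$ first and then translates), which yields the same bound $M(L)=M_0+L$.
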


\begin{proof}
Since $Y$ is simply connected each of the (finitely many) edge path loops at
$\ast$ which have length $\leq L$ is homotopically trivial in $St^{M_{1}}%
(\ast) $ for some integer $M_{1}$. If $\alpha$ is a loop at $\ast$ of length
$L$ and $v$ is a vertex of $\alpha$ then $St^{M_{1}}(\ast)\subset St^{M_{1}%
+L}(v)$ and so $\alpha$ is homotopically trivial in $St^{M}(v)$ where
$M=M_{1}+L$. The lemma follows by translation in $J$.
\end{proof}

\begin{lemma}
\label{Jends} Suppose $C$ is a finite subcomplex of $Y$ and $U$ is a
$J$-unbounded component of $Y-J\cdot C$. Let $S^{0}$ be a finite generating
set for $S$ (the $J$-stabilizer of $U$), and let $\Lambda(S,S^{0})$ be the
Cayley graph of $S$ with respect to $S^{0}$. Let $m_{1}:\Lambda(S,S^{0}%
)\to\Lambda(J,J^{0}) $ be an $S$-equivariant map where $m_{1}(v)=v$ for each
vertex $v$ of $\Lambda(S,S^{0})$, and each edge of $\Lambda(S,S^{0})$ is
mapped to an edge path in $\Lambda(J,J^{0})$. Let $m=zm_{1}:\Lambda
(S,S^{0})\to Y$. Then there is a compact set $D_{\ref{Jends}}(C,U,
S^{0})\subset Y$ such that for any compact subset $E$ of $Y$ containing $D$,
there is a bijection
\[
\mathcal{M}_{U}:\mathcal{E}(\Lambda(S,S^{0}))\twoheadrightarrow\hspace{-.21in}
\rightarrowtail\mathcal{E}(U, E)=\mathcal{E}(U,D)
\]
and an integer $I_{\ref{Jends}}(U,C,D)$ such that if $q$ is a proper edge path
ray in $\Lambda(S,S^{0})$ and $\mathcal{M}([q])=[t]$ then there is a
$t^{\prime}\in[t] $ such that for each vertex $v$ of $m(q)$ there is an edge
path of length $\leq I$ from $v$ to a vertex of $t^{\prime}$ and if $w$ is a
vertex of $t^{\prime}$ then there is an edge path of length $\leq I$ from $w$
to a vertex of $m(q)$.
\end{lemma}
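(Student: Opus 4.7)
Since $S$ is finitely generated (Theorem \ref{Craig}, after possibly enlarging $C$) and infinite (Lemma \ref{trans}), the Cayley graph $\Lambda(S,S^{0})$ is a locally finite, connected, infinite graph with a well-defined end set. My plan is to realize $\mathcal{M}_{U}$ as the end map of an $S$-equivariant orbit map into $U$, then identify ends via a \v{S}varc--Milnor-style cocompactness observation. First fix a vertex $v_{0}\in U\cap(J\cdot D_{0})$ for some compact $D_{0}\supseteq C\cup\{\ast\}$; Lemma \ref{Jends2} supplies an integer $N$ such that each vertex of $U\cap(J\cdot D_{0})$ is within $N$ of $S\ast$ in $Y$ and vice versa. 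For each generator $s_{0}\in S^{0}$ choose an edge path $\gamma_{s_{0}}$ in $U$ from $v_{0}$ to $s_{0}v_{0}$; this is possible because $S$ preserves $U$ and $U$ is path-connected. Define an $S$-equivariant map $m'\colon\Lambda(S,S^{0})\to U$ by $m'(s):=sv_{0}$ on vertices and $m'(e):=s\gamma_{s_{0}}$ on the edge $e$ from $s$ to $ss_{0}$. Let $D=D_{\ref{Jends}}$ be a finite subcomplex containing $D_{0}$ together with the finitely many $\gamma_{s_{0}}$; then $m'(\Lambda(S,S^{0}))\subseteq U\cap(J\cdot D)\subseteq U\cap(J\cdot E)$ for every compact $E\supseteq D$, and $m'$ is proper because the $J$-action is free and each edge maps to a bounded edge path. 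Set $\mathcal{M}_{U}([q]):=[m'(q)]$.

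For the bijection, the essential observation is that the quotient $(U\cap J\cdot E)/S$ injects into $p(E)\subseteq J\backslash Y$ (two points of $U$ having the same $J$-image are automatically $S$-equivalent, since $S$ is the full $J$-stabilizer of $U$) and therefore has only finitely many cells. Consequently $S$ acts properly and with finitely many orbits of cells on the locally finite complex $U\cap(J\cdot E)$, and by construction $S$ preserves the connected subcomplex $V$ containing $m'(\Lambda(S,S^{0}))$. A \v{S}varc--Milnor-type argument then shows that the orbit map $s\mapsto sv_{0}$ is a quasi-isometric embedding with quasi-dense image into $V$, identifying $\mathcal{E}(\Lambda(S,S^{0}))$ with $\mathcal{E}(V)$. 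For surjectivity onto all of $\mathcal{E}(U,E)$, given a proper ray $t=(w_{0},w_{1},\ldots)$ in $U\cap(J\cdot E)$, Lemma \ref{Jends2} (applied with $E$ in place of $D$) locates $s_{i}\in S$ with $d_{Y}(w_{i},s_{i}\ast)\leq N'$; the bounded $Y$-jumps $d_{Y}(s_{i}\ast,s_{i+1}\ast)$ translate via the properness of the orbit map $J\to Y$ to bounded $\Lambda(S,S^{0})$-jumps, and concatenating short connecting words yields a proper ray $q\in\Lambda(S,S^{0})$ with $\mathcal{M}_{U}([q])=[t]$. Well-definedness and injectivity follow by the same lifting argument applied to the connecting paths witnessing end-equivalence. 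The identity $\mathcal{E}(U,E)=\mathcal{E}(U,D)$ is then automatic since both biject with $\mathcal{E}(\Lambda(S,S^{0}))$.

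For the tracking constant, note that $d_{Y}(m(s),m'(s))=d_{Y}(s\ast,sv_{0})\leq N$ at every $\Lambda$-vertex, while intermediate vertices inserted along $z(m_{1}(e))$ and $s\gamma_{s_{0}}$ are bounded in $Y$-distance from the endpoints by $K\cdot\max_{s_{0}\in S^{0}}|m_{1}(\text{edge to }s_{0})|$ and $\max_{s_{0}\in S^{0}}|\gamma_{s_{0}}|$ respectively; taking $I$ to be $N$ plus the larger of these constants yields the required mutual tracking between $m(q)$ and $t':=m'(q)$. I expect the main obstacle to be the \v{S}varc--Milnor step behind surjectivity: because $S$ does not act cocompactly on $U$ itself and $U\cap(J\cdot E)$ need not be connected, one must choose $D$ with sufficient care --- plausibly by an $St^{M}$-enlargement invoking Lemma \ref{kill} to fill short loops --- so that every proper ray in $U\cap(J\cdot E)$ can be routed, inside $U\cap(J\cdot E)$ and with length uniformly controlled by $d_{Y}$, into the distinguished component $V$ containing $m'(\Lambda(S,S^{0}))$.
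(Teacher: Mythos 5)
Your construction is essentially the paper's. The paper also builds the tracking ray as (a bounded perturbation of) the $S$-orbit of a fixed point of $U$: it pushes each $S$-vertex $v_i\ast$ of $m(q)$ into $U$ along a translate $v_i\alpha_0$ of a fixed path $\alpha_0$ and joins consecutive images by uniformly bounded paths inside $U\cap(J\cdot D)$, defines $\mathcal{M}$ by $q\mapsto\beta_q$, and proves surjectivity and injectivity by exactly the Lemma \ref{Jends2} argument you sketch. Your equivariant choice of edge paths $\gamma_{s_0}$ and the \v{S}varc--Milnor identification of $\mathcal{E}(\Lambda(S,S^0))$ with the ends of the component $V$ are harmless repackagings of the same ideas (the cocompactness observation you use --- that $(U\cap J\cdot E)/S$ injects into $p(E)$ --- is correct, since $jU\cap U\neq\emptyset$ forces $j\in S$).

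The one genuine gap is the step you flag in your last sentence, and the tool you point to for closing it is not the right one. Surjectivity (and hence the claim $\mathcal{E}(U,E)=\mathcal{E}(U,D)$) requires that an arbitrary proper ray $t$ in $U\cap(J\cdot E)$ be joined to $m'(q(t))$ by paths lying \emph{inside} $U\cap(J\cdot E)$ and escaping every compact set; if some unbounded piece of $U\cap(J\cdot E)$ could not be so routed into the component $V$ carrying the orbit, $\mathcal{M}_U$ would simply fail to be onto. The short paths supplied by Lemma \ref{Jends2} run from vertices of $U$ to $S\ast\subseteq J\ast$ and therefore leave $U$, so they cannot themselves witness equivalence in $\mathcal{E}(U,E)$; and Lemma \ref{kill} fills short loops rather than producing in-$U$ connections. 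What is actually needed --- and what the paper's proof supplies --- is a uniformity constant $Z$ such that any two vertices of $U$ joined in $Y$ by an edge path of length at most $N+|\alpha_0|$ passing through $J\ast$ are joined by an edge path of length at most $Z$ lying \emph{in} $U$; this follows from cocompactness of the $J$-action (only finitely many such configurations up to translation, each resolvable inside the path-connected set $U$), after which $D$ is enlarged to contain $St^{Z+N}(\ast)$ so that the replacement paths stay in $U\cap(J\cdot D)$. With that constant in place your routing step, and with it the whole argument, goes through.
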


\begin{proof}
Throughout this proof $\Lambda=\Lambda(S,S^{0})$. We call the points
$m(S)(= S\ast) \subset Y$, the $S$-vertices of $Y$. There is an integer
$\mathbf{B(S^{0})}$ such that if $e$ is an edge of $\Lambda$ then the edge
path $m(e)$ has length $\leq B$. Fix $\alpha_{0}$ an edge path in $Y$ from
$\ast$ to a vertex of $u\in U$. If $[v,w]$ is an edge of $\Lambda$ then
$(v\alpha_{0}^{-1}, m(e), w\alpha_{0})$ is an edge path of length $\leq
B+2|\alpha_{0}|$ in $Y$ connecting $vu$ and $wu$ (the terminal points of
$v(\alpha_{0})$ and $w(\alpha_{0})$). Hence there is an integer $\mathbf{A}$
(depending only on the integer $B+2|\alpha_{0}|$) and an edge path of length
$\leq A$ in $U$ from the terminal point of $v(\alpha_{0})$ to the terminal
point of $w(\alpha_{0})$. Let $\mathbf{I}=|\alpha_{0}|+max\{A,B\}$. Let
$\mathbf{D_{1}}$ be a finite subcomplex of $Y$ containing $St^{A+B}(\ast)\cup
St(C)$. By Lemma \ref{Jends2} there is an integer $\mathbf{N}$ such that each
vertex of $(J\cdot D_{1})\cap U$ is connected by an edge path of length $\leq
N$ to a vertex of $S\ast$. There is an integer $\mathbf{Z}$ such that if $a$
and $b$ are vertices of $U$ which belong to an edge path in $Y$ of length
$\leq N+|\alpha_{0}|$, and this path contains a point of $J\ast$, then there
is an edge path of length $\leq Z$ in $U$ connecting $a$ and $b$. Let
$\mathbf{D}$ contain $D_{1}\cup St^{Z+N}(\ast)$.

Let $q$ be a proper edge path ray in $\Lambda$ with $q(0)=1$. Let the
consecutive $S$-vertices of $m(q)$ be $v_{0}=\ast, v_{1},v_{2},\ldots$. (So
the edge path distance in $Y$ between $v_{i}$ and $v_{i+1}$ is $\leq B$.) For
simplicity assume that $v_{i}$ is the element of $S$ that maps $\ast$ to
$v_{i}$. Then $v_{i}(\alpha_{0})$ is an edge path that ends in $U$. By the
definition of $D_{1}$, there is an edge path $\beta_{i}$ in $U\cap(J\cdot D)$
from the end point of $v_{i}(\alpha_{0})$ to the end point of $v_{i+1}%
(\alpha_{0})$ of length $\leq A$ (see the left hand side of Figure 2). For
each vertex $v$ of the proper edge path ray $\beta_{q}=(\beta_{0}%
,\beta_{1},\ldots)$ (in $U\cap(J\cdot D)$) there is an edge path of length
$\leq A+|\alpha_{0}|\leq I$ from $v$ to a vertex of $m(q)$. For each vertex
$w$ of $m(q)$ there is an edge path of length $\leq B+|\alpha_{0}|\leq I$ from
$w$ to a vertex of $\beta_{q}$. In particular, $\beta_{q}$ is a proper
$J$-bounded ray in $U$. If $p\in[q]\in\mathcal{E}(\Lambda(S,S^{0}))$ (with
$p(0)=1$) then $m(p)$ is of bounded distance from $\beta_{p}$. If $\delta_{i}$
is a sequence of edge paths in $\Lambda$ each beginning at a vertex of $q$ and
ending at a vertex of $p$, such that any compact subset intersects only
finitely many $\delta_{i}$, then the paths $m(\delta_{i})$ connect $m(q)$ to
$m(p)$ and (since $m$ is a proper map) any compact subset of $Y$ intersects
only finitely many $m(\delta_{i})$. The $m(\delta_{i})$ determine (using
translates of $\alpha_{0}$ as above) edge paths in $U\cap(J\cdot D)$
connecting $\beta_{q}$ and $\beta_{p}$ so that $[\beta_{p}]=[\beta_{q}]$ in
$\mathcal{E}(U,E)$ for any finite subcomplex $E$ of $Y$ which contains $D$.
This defines a map $\mathcal{M}:\mathcal{E}(\Lambda)\to\mathcal{E}(U, E)$
which satisfies the last condition of our lemma and it remains to show that
$\mathcal{M}$ is bijective.

Let $r$ be a proper edge path $J$-bounded ray in $U$. Then $r$ has image in
$J\cdot E$ for some finite subcomplex $E$ containing $D$. Let $v_{1}%
,v_{2},\ldots$ be the consecutive vertices of $r$. By Lemma \ref{Jends2} there
is an integer $N_{E}$ such that each $v_{i}$ is within $N_{E}$ of $S\ast$. Let
$\tau_{i}$ be a shortest edge path from $v_{i}$ to $S\ast$, so that $|\tau
_{i}|\leq N_{E}$. We may assume without loss that the image of $\tau_{i}$ is
in $J\cdot E$. Let $w_{i}\in S\ast$ be the terminal point of $\tau_{i}$. Let
$z_{i}$ be the first vertex of $\tau_{i}$ in $J\cdot D_{1}$. Then the segment
of $\tau_{i}$ from $z_{i}$ to $w_{i}$ has length $\leq N$. For each $i$ there
is an edge path in $Y$ of length $\leq2N_{E}+1$ connecting $w_{i}$ to
$w_{i+1}$. Hence there is a proper edge path ray $q(r)$ in $\Lambda$ such that
$m(q(r))$ contains each $w_{i}$. The proper edge path ray $\beta_{q(r)}$ has
image in $U\cap(J\cdot D_{1})$ and there is an edge path of length $\leq Z$ in
$U\cap(J\cdot D)$ from $z_{i}$ to a vertex of $\beta_{q(r)}$. Hence there is
an edge path in $U\cap(J\cdot E)$ of length $\leq Z+N_{E}$ from $v_{i}$ to a
vertex of $\beta_{q(r)}$ so that $[r]=[\beta_{q(r)}]$ in $\mathcal{E}(U,E)$.
In particular, $\mathcal{M}$ is onto.

Finally we show $\mathcal{M}$ is injective. Suppose $a$ and $b$ are distinct
proper edge path rays in $\Lambda$ with initial point $1$, such that
$[\beta_{a}]=[\beta_{b}]$ in $\mathcal{E}(U,E)$ for some $E$ containing $D$.
Let $\tau_{i}$ be a sequence of edge paths in $U\cap(J\cdot E)$ where each
begins at a vertex of $\beta_{a}$, ends at a vertex of $\beta_{b}$ and so that
only finitely many intersect any given compact set (a cofinal sequence). By
the construction of $\beta_{a}$ and $\beta_{b}$ we may assume the initial
point of $\tau_{i}$ is the end point of $v_{i}\alpha_{0}$ for $v_{i}$ a vertex
of $a$ in $\Lambda$ and the terminal point of $\tau_{i}$ is the end point of
$w_{i}\alpha_{0}$ for $w_{i}$ a vertex of $b$. By Lemma \ref{Jends2} there is
an integer $N_{E}(\geq|\alpha_{0}|)$ such that each vertex of $\tau_{i}$ is
within $N_{E}$ of $S\ast$. For each $i$, this defines a finite sequence
$A_{i}$ of points in $S\ast$ beginning with $v_{i}\ast$ on $m(a)$, ending with
$w_{i}\ast$ on $m(b)$, each within $N_{E}$ of a point of $\tau_{i}$ and
adjacent points of $A_{i}$ are within $2N_{E}+1$ of one another. Since the
$\tau_{i}$ are cofinal, so are the $A_{i}$. Since the distance between
adjacent points of $A_{i}$ is bounded, if $u$ and $v$ are vertices of
$\Lambda(S,S^{0})$ such that $m(u)$ and $m(v)$ are adjacent in $A_{i}$ then
there is a bound on the distance between $u$ and $v$ in $\Lambda(S,S^{0})$.
This implies $a$ and $b$ determine the same end of $\Lambda(S,S^{0})$.
\end{proof}

\begin{remark}
\label{unify} Consider Lemma \ref{Jends} for components $gU$ of $Y-J\cdot C$
for $g\in J$. The stabilizer of $gU$ is $gSg^{-1}$ and there may be no bound
on the integers $I(gU, C,D)$ or the size of $D(C,gU)$. For $gU$, one can
consider instead $m_{g}:\Lambda(S,S^{0})\to Y$ by $m_{g}(x)=gm(x)$ (so
$m_{g}(1)=g\ast$). Lemma \ref{JendsUnify} is a generalization of Lemma
\ref{Jends} that applies to all $J$-translates of $U$. Since there are only
finitely many $J$-unbounded components of $Y-J\cdot C$ up to $J$-translation,
the dependency of $I$ and $D$ on $U$ can be eliminated and in the next lemma
$I_{\ref{JendsUnify}}$ and $D_{\ref{JendsUnify}}$ are taken to only depend on
$C$.
\end{remark}

For $C$ compact in $Y$, let $\mathcal{U}=\{U_{1},\ldots, U_{l}\}$ be a
set of $J$-unbounded components of $Y-J\cdot C$ such that if $U$ is any
$J$-unbounded component of $Y-J\cdot C$ then $U=gU_{i}$ for some $g\in J$ and
some $i\in\{1,\ldots, l\}$. Also assume that $U_{i}\ne gU_{j}$ for any $i\ne
j$ and any $g\in J$. Call $\mathcal{U}$ a \textit{component transversal} for
$Y-J\cdot C$. Let $S_{i}^{0}$ be a finite generating set for $S_{i}$, the
$J$-stabilizer of $U_{i}$ and $\Lambda_{i}=\Lambda(S_{i},S_{i}^{0})$ the
Cayley graph of $S_{i}$ with respect to $S_{i}^{0}$. For $g\in J$, let
$m_{(g,i)}:\Lambda_{i}\to Y$ be defined by $m_{(g,i)}(x)=gm_{i}(x)$ (where
$m_{i}:\Lambda_{i}\to Y$ is defined by Lemma \ref{Jends}). In particular,
$m_{(g,i)}(S_{i})=gS_{i}\ast$.

\begin{lemma}
\label{JendsUnify} For $i\in\{1,\ldots, l\}$, let  $D_{i}= D_{\ref{Jends}}(C,U_{i},S_{i}^{0})$,
$D_{\ref{JendsUnify}}(C)=\cup_{i=1}^{l}D_{i}\subset Y$, $I_{\ref{JendsUnify}%
}(C)=max\{I_{\ref{Jends}}(U_{i},C, D_{i}\}_{i=1}^{l}$ and $\mathcal{M}%
_{i}:\mathcal{E}(\Lambda_{i})\twoheadrightarrow\hspace{-.21in} \rightarrowtail
\mathcal{E}(U_{i}, E)$ (Lemma \ref{Jends}). For $E$ compact containing
$D_{\ref{JendsUnify}}(C)$ and $g\in J$, there is a bijection
\[
\mathcal{M}_{(g,i)}:\mathcal{E}(\Lambda_{i})\twoheadrightarrow\hspace{-.21in}
\rightarrowtail\mathcal{E}(gU_{i}, E) \hbox{ where } \mathcal{M}%
_{(g,i)}([q])=g\mathcal{M}_{i}([q])
\]
such that if $q$ is a proper edge path ray in $\Lambda_{i}$ and $\mathcal{M}%
_{(g,i)}([q])=[t]$ then there is $t^{\prime}\in[t] $ such that for each vertex
$v$ of $m_{(g,i)}(q)$ there is an edge path of length $\leq
I_{\ref{JendsUnify}}(C)$ from $v$ to a vertex of $t^{\prime}$ and if $w$ is a
vertex of $t^{\prime}$ then there is an edge path of length $\leq
I_{\ref{JendsUnify}}(C)$ from $w$ to a vertex of $m_{(g,i)}(q)=gm_{i}(q)$.
\end{lemma}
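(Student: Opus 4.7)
The plan is to reduce Lemma \ref{JendsUnify} to Lemma \ref{Jends} by exploiting two elementary facts: the transversal $\{U_1,\ldots,U_l\}$ is finite, so taking a union and a maximum over it produces uniform constants; and each $g\in J$ acts on $Y$ as a cell-preserving homeomorphism, hence as an isometry of the edge-path metric on the $1$-skeleton that permutes the $J$-translates of any compact set. These two observations together make the conclusion essentially a $J$-equivariance statement.

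First I would apply Lemma \ref{Jends} to each $U_i$ separately with generating set $S_i^0$, obtaining for each $i$ a compact $D_i = D_{\ref{Jends}}(C,U_i,S_i^0)$, an integer $I_i = I_{\ref{Jends}}(U_i,C,D_i)$, and a bijection $\mathcal{M}_i:\mathcal{E}(\Lambda_i)\twoheadrightarrow\hspace{-.21in}\rightarrowtail\mathcal{E}(U_i,E)$ valid for every compact $E\supset D_i$. Set $D_{\ref{JendsUnify}}(C):=\bigcup_{i=1}^l D_i$ and $I_{\ref{JendsUnify}}(C):=\max_i I_i$. Any compact $E$ that contains $D_{\ref{JendsUnify}}(C)$ then contains every $D_i$, so the $\mathcal{M}_i$ are simultaneously available, and the equality $\mathcal{E}(U_i,E)=\mathcal{E}(U_i,D_i)$ from Lemma \ref{Jends} extends to all such $E$.

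Next, fix $g\in J$ and $i\in\{1,\ldots,l\}$ and observe that, since $gJ=J$, we have $g(J\cdot E)=J\cdot E$. Hence left translation by $g$ restricts to a cellular homeomorphism
\[
g\colon U_i\cap(J\cdot E)\longrightarrow gU_i\cap(J\cdot E)
\]
which is an isometry for edge-path distance, sends proper $J$-bounded edge-path rays bijectively to proper $J$-bounded edge-path rays, and preserves the equivalence relation defining $\mathcal{E}(\,\cdot\,,E)$. Thus $[t]\mapsto[gt]$ is a bijection $\mathcal{E}(U_i,E)\to\mathcal{E}(gU_i,E)$, and composing with $\mathcal{M}_i$ yields the required map $\mathcal{M}_{(g,i)}([q])=g\,\mathcal{M}_i([q])$, which is bijective as a composition of bijections.

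The only thing that still needs checking — and the step I would be most careful about in a formal write-up — is that the distance bound promised by Lemma \ref{Jends} transfers under $g$ with the same constant. Given a proper edge-path ray $q$ in $\Lambda_i$ with $\mathcal{M}_i([q])=[t]$, Lemma \ref{Jends} supplies a representative $t'\in[t]$ such that each vertex of $m_i(q)$ is within edge-path distance $I_i$ of a vertex of $t'$ and vice versa. Then $gt'\in g[t]=\mathcal{M}_{(g,i)}([q])$, and because $g$ is an isometry of the $1$-skeleton while $m_{(g,i)}(q)=g\,m_i(q)$ by definition, the same two-sided bound $I_i\le I_{\ref{JendsUnify}}(C)$ holds between the vertices of $m_{(g,i)}(q)$ and the vertices of $gt'$. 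This is the main (and essentially the only) obstacle, and it is overcome precisely by the choice $m_{(g,i)}:=g\circ m_i$, which builds the required $J$-equivariance into the construction from the outset.
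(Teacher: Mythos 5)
Your proposal is correct and follows exactly the route the paper intends: the paper gives no separate proof of Lemma \ref{JendsUnify}, but Remark \ref{unify} immediately preceding it prescribes precisely your argument --- apply Lemma \ref{Jends} to each of the finitely many transversal components $U_i$, take the union of the $D_i$ and the maximum of the $I_i$, and transport everything to $gU_i$ via the cell-preserving (hence edge-path-isometric) translation $g$ through the definition $m_{(g,i)}=g\circ m_i$. Your careful check that the constant $I_i$ transfers unchanged under $g$ is the right point to verify, and it goes through for the reason you give.
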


\vspace{.5in} \vbox to 2in{\vspace {-2in} \hspace {-1.3in}
\hspace{-.6 in}
\includegraphics[scale=1]{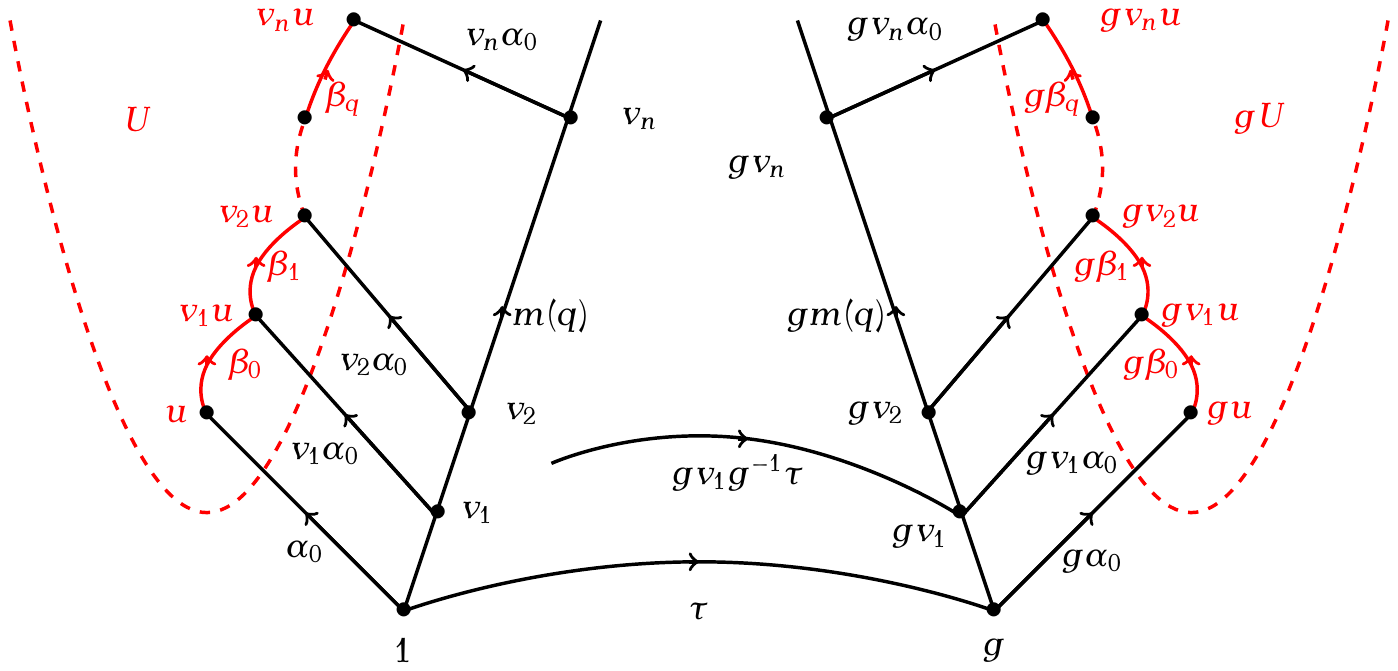}\label{FigEx2}
\vss }

\vspace{.5in}

\centerline{Figure 2}

\medskip

\section{Proof of the main theorem}

\label{proof}

We set notation for the proof of our main theorem. Let $C_{0}$ be compact in
$Y$, and $J^{0}$ be a finite generating set for the infinite group $J$ which
acts as cell preserving covering transformations on $Y$. Let $C$ be a finite
subcomplex of $Y$ such that $J$ is co-semistable at $\infty$ in $Y$ with
respect to $C_{0}$ and $C$, and $J$ is semistable at $\infty$ in $Y$ with
respect to $J^{0}$, $C_{0}$ and $C$. As in the setup for Lemma
\ref{JendsUnify} we let $\mathcal{U}=\{U_{1},\ldots,U_{l}\}$ be a component
transversal for $Y-J\cdot C$, $S_{i}^{0}$ be a finite generating set for
$S_{i}$, the $J$-stabilizer of $U_{i}$ and $\Lambda_{i}=\Lambda
(S_{i},S_{i}^{0})$ be the Cayley graph of $S_{i}$ with respect to $S_{i}^{0}$.
For $g\in J$, let $m_{(g,i)}:\Lambda_{i}\rightarrow Y$ be defined by
$m_{(g,i)}(x)=gm_{i}(x)$ (where $m_{i}:\Lambda_{i}\rightarrow Y$ is defined by
Lemma \ref{Jends}). In particular, $m_{(g,i)}(S_{i})=gS_{i}\ast$.

The next lemma is a direct consequence of Lemma \ref{Jends2}.

\begin{lemma}
\label{step1} Let $N_{i}$ be $N_{\ref{Jends2}}(U_{i},C,St(C))$ and
$N_{\ref{step1}}=max\{N_{1},\ldots, N_{l}\}$. If $g\in J$ and $[v,w]$ is an
edge of $Y$ with $v\in gU_{i}$ and $w\in J\cdot C$ then there are edge paths
of length $\leq N_{\ref{step1}}$ from $v$ and $w$ to $gS_{i}\ast$ and for each
$q\in S_{i}\ast$, an edge path of length $\leq N_{\ref{step1}}$ from $gq$ to a
vertex of $St(J\cdot C)\cap gU_{i}$.
\end{lemma}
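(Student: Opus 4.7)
The plan is to invoke Lemma \ref{Jends2} separately for each component $U_i$ in the transversal $\mathcal{U}$, with $D$ taken to be $St(C)$. For each $i$, I first need to check the hypothesis that some vertex of $J\cdot St(C)$ lies in $U_i$; this is immediate because $U_i$ is $J$-unbounded and therefore nonempty, and by Lemma \ref{Unbd} its boundary meets $J\cdot C$, so $U_i$ contains vertices adjacent to $J\cdot C$, hence vertices of $St(J\cdot C) = J\cdot St(C)$. This gives the integers $N_i = N_{\ref{Jends2}}(U_i, C, St(C))$, and I set $N_{\ref{step1}} = \max\{N_1,\ldots,N_l\}$ (possibly bumping by a small additive constant to absorb a single edge; see below).

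Next, I reduce the general $g \in J$ case to the case $g = 1$ by translation. If $[v,w]$ is an edge with $v \in gU_i$ and $w \in J\cdot C$, then $g^{-1}[v,w]$ is an edge from $g^{-1}v \in U_i$ to $g^{-1}w \in J\cdot C$. Since $g^{-1}v$ is adjacent to a vertex of $J\cdot C$, it belongs to $St(J\cdot C)\cap U_i = (J\cdot St(C))\cap U_i$. Lemma \ref{Jends2} then provides an edge path $\alpha$ from $g^{-1}v$ to a vertex of $S_i\ast$ of length $\le N_i$. Applying $g$ yields an edge path $g\alpha$ from $v$ to a vertex of $gS_i\ast$ of length $\le N_i \le N_{\ref{step1}}$. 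For the vertex $w$, I prepend the edge $[w,v]$ to obtain an edge path from $w$ through $v$ to $gS_i\ast$ of length $\le N_i + 1$; this is the only place where the estimate needs a single extra edge, and it is absorbed into $N_{\ref{step1}}$.

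For the third conclusion, I apply the second half of Lemma \ref{Jends2}. Given $q \in S_i\ast$, the lemma produces an edge path of length $\le N_i$ from $q$ to a vertex of $U_i \cap (J\cdot St(C)) = U_i \cap St(J\cdot C)$. Translating this path by $g$ and using that $g$ preserves the subcomplex $J\cdot C$ (and hence $St(J\cdot C)$) and sends $U_i$ to $gU_i$, I obtain an edge path of length $\le N_i$ from $gq$ to a vertex of $St(J\cdot C) \cap gU_i$.

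There is no real obstacle here; the content of the lemma is essentially a packaging of Lemma \ref{Jends2} that (i) handles the ``second endpoint'' $w$ which lies in $J\cdot C$ rather than $U_i$, via a single extra edge to the adjacent vertex $v \in U_i$, and (ii) eliminates the dependence on $g$ by $J$-equivariance, using that there are only finitely many $J$-orbits of $J$-unbounded components of $Y - J\cdot C$ so that the maximum $\max_i N_i$ exists.
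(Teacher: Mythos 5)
Your proof is correct and is exactly the derivation the paper intends: the paper states this lemma without proof as ``a direct consequence of Lemma \ref{Jends2},'' and your argument (apply Lemma \ref{Jends2} with $D=St(C)$ to each $U_i$ in the transversal, reduce to $g=1$ by equivariance, and reach $w$ through the adjacent vertex $v\in U_i\cap St(J\cdot C)$) supplies precisely the omitted details. Your observation that the path from $w$ costs one extra edge, so that the constant should strictly be $\max_i N_i+1$ rather than $\max_i N_i$, is a fair and harmless correction to the paper's literal statement, since only the existence of a uniform bound is used downstream.
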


\begin{lemma}
\label{step2} There is an integer $M_{\ref{step2}}(C)$ and compact set
$D_{\ref{step2}}(C)$ in $Y$ containing $St^{M_{\ref{step2}}} (C)$ such that
for any $U_{i}\in\{U_{1},\ldots, U_{l}\}$, $g\in J$ and edge $[v,w]$ of $Y$
with $v\in gU_{i}-D_{\ref{step2}}$ and $w\in J\cdot C$, (see Figure 3) we have
the following:

\begin{enumerate}
\item There is an edge path $\gamma$ of length $\leq N_{\ref{step1}}$ from a
vertex $x=gx^{\prime}\ast\in gS_{i}\ast$ to $w$, where $x^{\prime}$ is a
vertex in an unbounded component $Q$ of $\Lambda(S_{i},S^{0}_{i}%
)-m_{(g,i)}^{-1}(St^{M_{\ref{step2}}}(C))$.

\item If $\gamma$ is as in part 1, and $r_{0}^{\prime}$ is any proper edge
path ray in $Q$ beginning at $x^{\prime}$ (so $r_{0}= m_{(g,i)}%
(r_{0}^{\prime})$ is a proper edge path ray beginning at $x$), then there is a
proper $J$-bounded ray $s_{v}$ beginning at $v$ such that $s_{v}$ has image in
$gU_{i}$ and is properly homotopic $rel\{v\}$ to $([v,w],\gamma^{-1},r_{0})$
by a proper homotopy with image in $St^{M_{\ref{step2}}}(im(r_{0}))\subset
Y-C$. So (by hypothesis) $J$ is co-semistable at $\infty$ in $gU_{i}$ with
respect to $s_{v}$ and $C_{0}$.
\end{enumerate}
\end{lemma}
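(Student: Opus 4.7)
The plan is to calibrate $M_{\ref{step2}}(C)$ and $D_{\ref{step2}}(C)$ large enough that two phenomena occur simultaneously: the vertex $x \in gS_{i}\ast$ produced by Lemma \ref{step1} pulls back under $m_{(g,i)}$ to a vertex $x'$ lying in an \emph{unbounded} component of $\Lambda_{i} - m_{(g,i)}^{-1}(St^{M_{\ref{step2}}}(C))$, and an $M_{\ref{step2}}$-thick tube around the resulting ray $r_{0}$ in $Y$ both avoids $C$ and accommodates a ladder-style proper homotopy. Let $B$ be a uniform bound (over $i = 1, \ldots, l$) on the length of the $Y$-edge path $m_{i}(e)$ for each edge $e$ of $\Lambda_{i}$. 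For each $M$, the finite subcomplex $m_{(g,i)}^{-1}(St^{M}(C)) \subset \Lambda_{i}$ determines, via Remark \ref{ends}, a radius bounding its bounded components in $\Lambda_{i}$; using finiteness of the transversal $\{U_{1},\dots,U_{l}\}$ together with the $S_{i}$-equivariance relating the preimages for different $g$, this radius is controlled by some $R(M)$ depending only on $C$, $M$, and the $U_{i}$. I would then take $M_{\ref{step2}}$ to exceed $N_{\ref{step1}} + I_{\ref{JendsUnify}}(C) + B$ plus the Lemma \ref{kill} fill-in bounds $M(L)$ for the uniformly bounded loop lengths arising below, and let $D_{\ref{step2}}$ contain $St^{M_{\ref{step2}} + R(M_{\ref{step2}})B + N_{\ref{step1}} + 1}(C)$.

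For Part 1, apply Lemma \ref{step1} to the edge $[v,w]$ to obtain $\gamma$ of length $\leq N_{\ref{step1}}$ from $w$ to a vertex $x = gx'\ast \in gS_{i}\ast$. Because $v \notin D_{\ref{step2}}$ and $x$ lies within $Y$-distance $N_{\ref{step1}} + 1$ of $v$, the $Y$-distance from $x$ to $C$ exceeds $R(M_{\ref{step2}})B + M_{\ref{step2}}$. If $x'$ were within $\Lambda_{i}$-distance $R(M_{\ref{step2}})$ of $m_{(g,i)}^{-1}(St^{M_{\ref{step2}}}(C))$, the $B$-bound on edge images of $m_{(g,i)}$ (combined with the fact that $J$ acts on $Y$ by isometries of the edge-length metric) would force $x = m_{(g,i)}(x')$ to lie within $Y$-distance $R(M_{\ref{step2}})B + M_{\ref{step2}}$ of $C$, a contradiction. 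Hence $x'$ lies outside the bounded portion of $\Lambda_{i} - m_{(g,i)}^{-1}(St^{M_{\ref{step2}}}(C))$, i.e., in some unbounded component $Q$, which is exactly the claim.

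For Part 2, let $r_{0}' \subset Q$ be a proper edge path ray from $x'$, and set $r_{0} = m_{(g,i)}(r_{0}')$. Since the vertices of $r_{0}$ avoid $St^{M_{\ref{step2}}}(C)$ and edges map to $B$-bounded paths, $St^{M_{\ref{step2}}}(im(r_{0})) \subset Y - C$ holds (with the slack built into the choice of $M_{\ref{step2}}$). Lemma \ref{JendsUnify} supplies $t' \in [t] = \mathcal{M}_{(g,i)}([r_{0}'])$, a proper $J$-bounded edge path ray in $gU_{i}$, with each vertex of $r_{0}$ within $I_{\ref{JendsUnify}}(C)$ of a vertex of $t'$ and vice versa. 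Choose an edge path $\sigma$ in the path-connected set $gU_{i}$ from $v$ to $t'(0)$ and set $s_{v} = (\sigma, t')$, a proper $J$-bounded edge path ray in $gU_{i}$ beginning at $v$. The proper homotopy rel $\{v\}$ between $s_{v}$ and $([v,w], \gamma^{-1}, r_{0})$ is then assembled by a standard ladder argument: Lemma \ref{JendsUnify} provides rungs $\delta_{k}$ of length $\leq I_{\ref{JendsUnify}}(C)$ from $r_{0}(k)$ to a vertex of $t'$, successive squares between rungs are filled via Lemma \ref{kill} applied to the uniformly bounded-length loops they bound, and an initial cap loop (built from $[v,w]$, $\gamma^{-1}$, short pieces of $r_{0}$ and $t'$, a rung, and $\sigma$) is filled similarly. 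The choice of $M_{\ref{step2}}$ absorbs all fill-in radii, so the homotopy image remains in the tube; the co-semistability conclusion for $s_{v}$ and $C_{0}$ is then the standing hypothesis on $J$ applied to the component $gU_{i}$.

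The hard part will be the initial cap: the connecting path $\sigma$ inside $gU_{i}$ from $v$ to $t'(0)$ is not \emph{a priori} $Y$-short, even though $v$ and $t'(0)$ sit within bounded $Y$-distance of $r_{0}(0) = x$, so a naive fill-in could escape the tube $St^{M_{\ref{step2}}}(im(r_{0}))$. The resolution I envisage is to replace $t'$ by an equivalent representative of $[t] \in \mathcal{E}(gU_{i}, E)$ whose initial vertex lies near the vertex of $gU_{i} \cap (J \cdot D)$ supplied by Lemma \ref{Jends2}, reachable from $v$ inside $gU_{i}$ by a uniformly short path built via the companion-ray construction in the proof of Lemma \ref{Jends}; this trims the cap loop to uniformly bounded length, so its Lemma \ref{kill}-filling remains inside the tube and the whole homotopy stays in $St^{M_{\ref{step2}}}(im(r_{0})) \subset Y - C$.
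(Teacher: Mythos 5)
Your Part 1 and your overall calibration strategy match the paper's: choose $M_{\ref{step2}}$ via Lemma \ref{kill} to fill uniformly bounded ladder squares, observe that only finitely many cosets $gS_{i}$ meet $St^{M_{\ref{step2}}}(C)$ (this finiteness, rather than any equivariance between preimages for different $g$, is what actually gives your uniform radius $R(M)$ -- the preimages $m_{(g,i)}^{-1}(St^{M}(C))$ for $g$ in distinct cosets are unrelated, but all except finitely many are empty), and enlarge $D_{\ref{step2}}$ so that $v\notin D_{\ref{step2}}$ forces $x^{\prime}$ into an unbounded component $Q$. That part is sound and is essentially the paper's argument, organized slightly differently (the paper treats the generic empty-preimage case and the finitely many exceptional cosets separately).

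The genuine gap is in Part 2, and it is exactly the one you flag and then do not close: the construction of $s_{v}$. Routing through Lemma \ref{JendsUnify} produces a companion ray $t^{\prime}$ that fellow-travels $r_{0}$, but gives you no control over the connecting path $\sigma$ from $v$ to $t^{\prime}(0)$ inside $gU_{i}$, so the initial cap loop has no a priori length bound and its Lemma \ref{kill} filling can leave $St^{M_{\ref{step2}}}(im(r_{0}))$ and hit $C$. Your proposed repair ("replace $t^{\prime}$ by an equivalent representative\ldots reachable from $v$ by a uniformly short path built via the companion-ray construction") is a gesture at the missing ingredient, not a proof of it. What is actually needed -- and what the paper supplies -- is a uniform constant $B=B(C)$ such that any two vertices of $gU_{i}\cap St(J\cdot C)$ at $Y$-distance at most $2N_{\ref{step1}}+A+1$ can be joined by an edge path of length at most $B$ \emph{inside} $gU_{i}$; this exists by cocompactness of $St(J\cdot C)$ under the $J$-action together with finiteness of the transversal. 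With $B$ in hand the paper builds $s_{v}$ directly and bypasses Lemma \ref{JendsUnify} entirely: for each $J$-vertex $x_{j}$ of $r_{0}$, Lemma \ref{step1} gives a rung $\gamma_{j}$ of length $\leq N_{\ref{step1}}$ landing at $v_{j}\in gU_{i}\cap St(J\cdot C)$, consecutive landing points are joined by paths $\beta_{j}$ of length $\leq B$ in $gU_{i}$, the zeroth rung is taken to be $(\gamma,[w,v])$ itself so that $s_{v}=(\beta_{1},\beta_{2},\ldots)$ starts at $v$, and every square of the ladder -- including the first -- is a loop of length $\leq A+B+2N_{\ref{step1}}+1$ through a $J$-vertex of $r_{0}$, hence fills inside $St^{M_{\ref{step2}}}(im(r_{0}))$. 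Until you either prove the uniform bound $B$ or produce the "uniformly short path" you invoke, the proper homotopy rel $\{v\}$ asserted in Part 2 is not established.
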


\begin{proof}
Let $A^{\prime}$ be an integer such that if $s\in\cup_{i=1}^{l}S_{i}^{0}$ then
there is an edge path of length $\leq A^{\prime}$ in $\Lambda(J,J^{0})$ from
$1 $ to $s$. The image of this path under $z:(\Lambda,1)\to(Y,\ast)$ is a path
in $Y$ of length $\leq KA^{\prime}=A$. Let $N= N_{\ref{step1}}$.
Select $B$ an integer such that if $a$ and $b$ are vertices of $St(J\cdot
C)\cap gU_{i}$ (for any $g\in J$ and $i\in\{1,\ldots, l\}$) of distance
$\leq2N+A+1$ in $Y$ then they can be joined by an edge path of length $\leq B
$ in $gU$. By Lemma \ref{kill} there is an integer $M_{\ref{step2}}$ such that
if $\beta$ is a loop in $Y$ of length $\leq A+B+2N+1$ and containing a vertex
of $J\ast$, then $\beta$ is homotopically trivial in $St^{M}(b)$ for any
vertex $b$ of $\beta$.

There are only finitely many pairs $(g,i)$ with $g\in J$ and $i\in\{1,\ldots,
l\}$ such that $gS_{i}\ast\cap St^{M}(C)\ne\emptyset$. If $gS_{i}\cap
St^{M}(C)=\emptyset$, then $m_{(g,i)}^{-1}(St^{M}(C))=\emptyset$. Lemma
\ref{step1} implies there is an edge path $\gamma$ of length $\leq
N_{\ref{step1}}$ from a vertex $x=gx^{\prime}\ast\in gS_{i}\ast$ to $w$. Now
let $r_{0}^{\prime}=(e_{0},e_{1},\ldots)$ be any proper edge path ray at
$x^{\prime}\in\Lambda(S_{i},S_{i}^{0})$. Let $\tau_{i}$ be the edge path
$m_{(g,i)}(e_{i})$ so that $\tau_{i}$ is an edge path in $Y$ of length $\leq
A$ and $r_{0}=m_{(g,i)}(r_{0}^{\prime})=(\tau_{1},\tau_{2},\ldots)$ is a
proper edge path at $x$ (see Figure 3). Let $x_{0}^{\prime}=x^{\prime}$ and $x_{j}^{\prime}$
be the end point of $e_{j}$ so that $x_{j}=gx_{j}^{\prime}\ast$ is the end
point of $\tau_{j}$. Let $\gamma_{0}=(\gamma,[w,v])$ (of length $\leq N+1$).
For $j\geq1$, let $\gamma_{j}$ be an edge path of length $\leq N_{\ref{step1}%
}$ from $x_{j}$ to $v_{j}\in gU_{i}\cap St(J\cdot C)$ (by Lemma \ref{step1}).
By the definition of $B $ there is an edge path $\beta_{j}$ in $gU_{i}$ from
$v_{j}$ to $v_{j+1}$ of length $\leq B$. Let $s_{v}$ be the proper edge path
$(\beta_{1},\beta_{2},\ldots)$, with initial vertex $v$. The loop
$(\gamma_{j-1}, \beta_{j}, \gamma_{j}^{-1}, \tau_{j}^{-1})$ has length $\leq
A+B+2N+1$ and contains the $J$-vertex $x_{j}$, and so is homotopically trivial
in $St^{M}(x_{j})\subset Y-C$. Combining these homotopies shows that $s_{v}$
is properly homotopic $rel\{v\}$ to $([v,w], \gamma^{-1}, r_{0})$ by a proper
homotopy with image in $St^{M}(im(r_{0}))\subset Y-C$. As long as
$D_{\ref{step2}}$ contains $St^{M}(C)$ the conclusion of our lemma is
satisfied for all such pairs $(g,i)$.

If $(g,i)$ is one of the finitely many pairs such that $gS_{i}\cap
St^{M}(C)\ne\emptyset$ then we need only find a compact $D_{(g,i)}$ so that
the lemma is valid for the pair $(g,i)$ and $D_{(g,i)}$, since we can let $D$
be compact containing $St^{M}(C)$ and the union of these finitely many
$D_{(g,i)}$.

\vspace{.5in} \vbox to 2in{\vspace {-2in} \hspace {-.5in}
\hspace{-.6 in}
\includegraphics[scale=1]{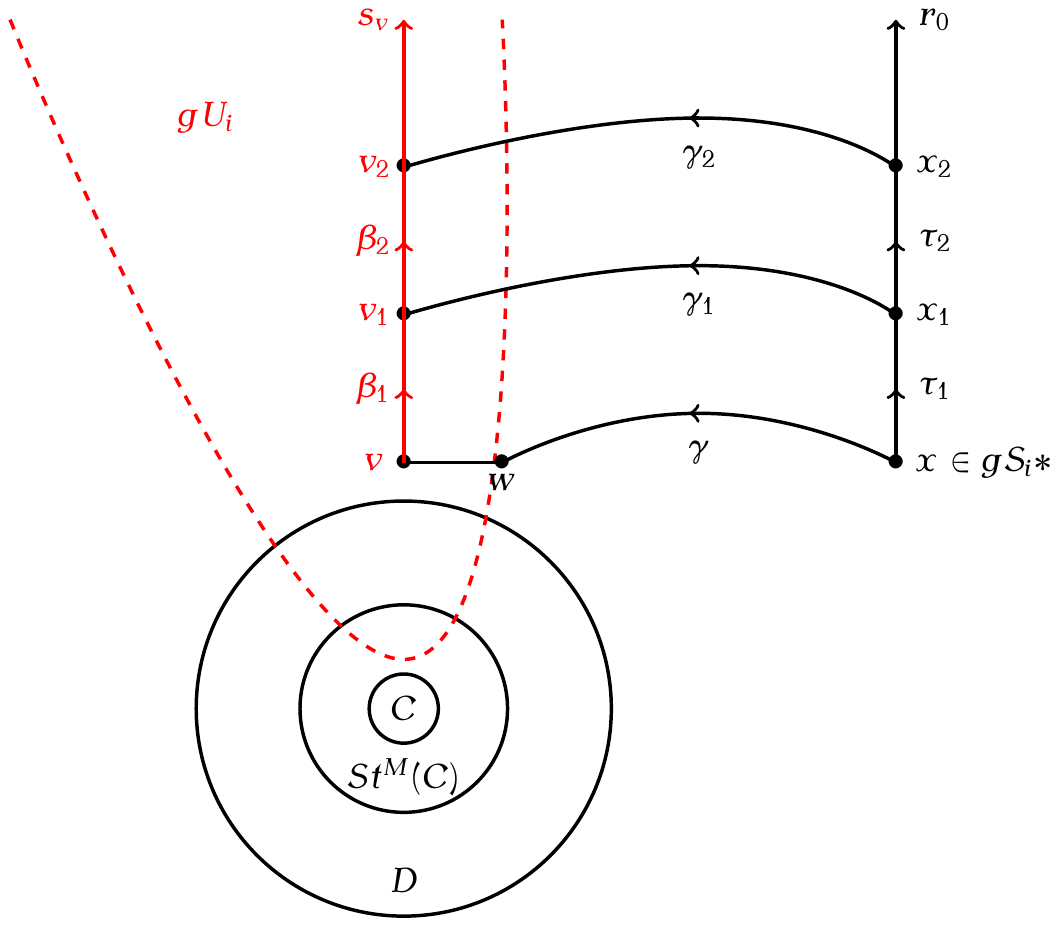}
\vss }

\vspace{1.5in}

\centerline{Figure 3}

\medskip

\medskip

Fix $(g,i)$ and let $E$ be compact in $\Lambda(S_{i},S_{i}^{0})=\Lambda_{i}$
containing the compact set $m_{(g,i)}^{-1}(St^{M}(C))$ and all bounded
components of $\Lambda_{i}-m_{(g,i)}^{-1}(St^{M}(C))$. Let $D_{(g,i)}$ be
compact in $Y$ containing $m_{(g,i)}(E)$. Select $\gamma$ exactly as in the
first case. Since $x^{\prime}$ is a vertex of $\Lambda_{i}$ in an unbounded
component $Q$ of $\Lambda_{i}-m_{(g,i)}^{-1}(St^{M}(C))$, there is a proper
edge path ray $r_{0}^{\prime}$ at $x^{\prime}$ with image in $Q$. Then
$r_{0}= m_{(g,i)}(r_{0}^{\prime})$ is a proper edge path ray at $x$ and
the vertices of $r_{0}^{\prime}$ are mapped to vertices $x_{0}=x, x_{1}%
,\ldots$ of $(gS_{i}\ast)-St^{M}(C)$. Select paths $\tau_{i}$ and $\beta_{i}$
as in the first case and the same argument shows that $s_{v}=(\beta_{1}%
,\beta_{2},\ldots)$ is properly homotopic $rel\{v\}$ to $([v,w], \gamma^{-1},
r_{0})$ by a proper homotopy with image in $St^{M}(im(r_{0}))\subset Y-C$.
\end{proof}

\begin{remark}
\label{ladder} The homotopy of Lemma \ref{step2} (pictured in Figure 3) of
$s_{v}$ to $([v,w],\gamma^{-1}, r_{0})$ is sometimes called a \textit{ladder
homotopy}. The \textit{rungs} of the ladder are the $\gamma_{i}$ and the
\textit{sides} of the ladder are $s_{v}$ and $r_{0}$. The loops determined by
two consecutive rungs and the segments of the two sides connecting these rungs
have bounded length and contain a vertex of $J\ast$. Lemma \ref{kill} implies
there is an integer $M$ such that each such loop is homotopically trivial by a
homotopy in $St^{M}(v)$ for $v$ any vertex of that loop. Combining these
homotopies gives a ladder homotopy.
\end{remark}

We briefly recall the outline of $\S $\ref{out}. We determine a compact set
$E(C_{0},C)$ such that for any compact set $F$, loops outside of $E$ and based
on a proper base ray $r_{0}$ can be pushed outside $F$ relative to $r_{0}$ and
by a homotopy avoiding $C_{0}$. A loop outside $E$ is written in the form
\[
\alpha= (\alpha_{1}, e_{1},\beta_{1}, \tilde e_{1},\alpha_{2},e_{2}\beta_{2},
\tilde e_{2}\ldots, \alpha_{n-1}, e_{n-1} ,\beta_{n-1}, \tilde e_{n-1},
\alpha_{n})
\]
where $\alpha_{i}$ is an edge path in $J\cdot C$, $e_{i}$ (respectively
$\tilde e_{i}$) is an edge with terminal (respectively initial) vertex in
$Y-J\cdot C$ and $\beta_{i}$ is an edge path in $Y-J\cdot C$ (see Figure 6).

We can push the $\alpha_{j}$ subpaths of $\alpha$ arbitrarily far out between
$(\tilde\gamma_{j-1}^{-1}, \tilde r_{j-1})$ and $(\gamma_{j}^{-1},r_{j}$)
using the semistability of $J$ in $Y$ with respect to $C$. Lemmas \ref{far}
and \ref{USS} consider subpaths of the form $(e,\beta, \tilde e)$ in $\alpha$.
The edges $e$ and $\tilde e$ are properly pushed off to infinity using ladder
homotopies given by Lemma \ref{step2}. The $\beta$ paths present difficulties
and two cases are considered. If $\beta$ lies in $gU_{i}$ and $gS_{i}\ast$
does not intersect $St^{M_{\ref{step2}}}(C)$ then Lemma \ref{far}, provides a
proper homotopy to compatibly push $(e,\beta, \tilde e)$ arbitrarily far out.
In Lemma \ref{USS} we consider paths $(e,\beta, \tilde e) $ not considered in
Lemma \ref{far}. For $g\in J$ and $i\in\{1,\ldots,l\} $ there are only
finitely many cosets $gS_{i}$ such that $(gS_{i}\ast)\cap St^{M_{\ref{step2}}%
}(C)\ne\emptyset$ and we are reduced to considering paths $(e,\beta, \tilde
e)$ with $\beta$ in $gU_{i}$ for these $gS_{i}$.

\begin{lemma}
\label{far} Suppose that $g\in J$, $i\in\{1,\ldots, l\}$ and $([w,v],\beta,
[\tilde v,\tilde w])$ is an edge path in $Y-D_{\ref{step2}}$. Suppose further that

1) $w,\tilde w\in J\cdot C$ and $v,\tilde v\in gU_{i}$,

2) $\beta$ is an edge path in $gU_{i}$,

3) $\gamma$ (respectively $\tilde\gamma$) is an edge path of length $\leq
N_{\ref{step1}}$ from $x =gx^{\prime}\in gS_{i}\ast$ (resp. $\tilde
x= g\tilde x^{\prime}\in gS_{i}$) to $w$ (resp. $\tilde w$), (such paths
exist by Lemma \ref{step1}) and

4) $x^{\prime}$ and $\tilde x^{\prime}$ belong to the \textbf{same} unbounded
component $Q$ of $\Lambda(S_{i},S_{i}^{0})-m_{(g,i)}^{-1} (St^{M_{\ref{step2}%
}}(C))$ (in particular, when $m_{(g,i)}^{-1} (St^{M_{\ref{step2}}%
}(C))=\emptyset$) then:

There are proper $\Lambda_{i}$-edge path rays $r^{\prime}$ at $x ^{\prime}$
and $\tilde r ^{\prime}$ at $\tilde x ^{\prime}$ such that, $r ^{\prime}$ and
$\tilde r ^{\prime}$ have image in $Q$ and if $r = m_{(g,i)}(r^{\prime})$
and $\tilde r = m_{(g,i)}(\tilde r ^{\prime})$ then for any compact set
$F\subset Y$, there is an integer $d\geq0$ and edge path $\psi$ in $Y-F$ from
$r (d)$ to $\tilde r (d)$ such that the loop:
\[
(r |_{[0,d]}^{-1}, \gamma,[ w ,v ], \beta, [\tilde v ,\tilde w ], \tilde
\gamma^{-1} , \tilde r |_{[0,d]},\psi^{-1})
\]
is homotopically trivial by a homotopy in $Y-C_{0}$. (So $([w,v], \beta,
[\tilde v,\tilde w])$ can be pushed between $(\gamma^{-1}, r)$ and
$(\tilde\gamma^{-1}, \tilde r)$ to a path in $Y-F$, by a homotopy in $Y-C_{0}$.)
\end{lemma}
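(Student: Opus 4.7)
The plan is to combine the ladder homotopies of Lemma \ref{step2} at both endpoints $v$ and $\tilde v$ with co-semistability of $J$ in $gU_i$, bridging the pieces via the connectivity of $Q$.

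To begin, I would choose proper edge path rays $r'$ at $x'$ and $\tilde r'$ at $\tilde x'$ with images in $Q$ converging to a common end $\epsilon$ of $Q$; such rays exist since $Q$ is a connected, unbounded, locally finite subgraph of $\Lambda_i$. Given any compact $F \subseteq Y$, set $K' := m_{(g,i)}^{-1}(F)$, which is compact because $m_{(g,i)}$ is proper. For all $d$ sufficiently large, $r'(d)$ and $\tilde r'(d)$ lie in a single unbounded component of $Q - K'$, so I can pick an edge path $\psi'$ in that component connecting them; the edge path $\psi := m_{(g,i)}(\psi')$ then lies in $m_{(g,i)}(Q) \cap (Y - F) \subseteq Y - St^{M_{\ref{step2}}}(C) \subseteq Y - C_0$.

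Next, I apply Lemma \ref{step2} at $v$ with input ray $r'$ and at $\tilde v$ with input $\tilde r'$ to obtain proper $J$-bounded rays $s_v, s_{\tilde v} \subseteq gU_i$ together with ladder homotopies in $Y - C \subseteq Y - C_0$. Truncating these ladders to initial rectangles, with appropriate rungs $\gamma_d, \tilde\gamma_d$, yields homotopies rel endpoints in $Y - C_0$:
\[
r|_{[0,d]}^{-1} \cdot \gamma \cdot [w,v] \;\simeq\; \gamma_d^{-1} \cdot s_v|_{[0,d_v]}^{-1}, \qquad [\tilde v, \tilde w] \cdot \tilde\gamma^{-1} \cdot \tilde r|_{[0,d]} \;\simeq\; s_{\tilde v}|_{[0,d_{\tilde v}]} \cdot \tilde\gamma_d.
\]
Splicing these into $L$, the task reduces to showing that the loop
\[
L' \;=\; \gamma_d^{-1} \cdot s_v|_{[0,d_v]}^{-1} \cdot \beta \cdot s_{\tilde v}|_{[0,d_{\tilde v}]} \cdot \tilde\gamma_d \cdot \psi^{-1}
\]
is null-homotopic in $Y - C_0$.

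The main obstacle is this last step, and here I would invoke co-semistability of $J$ in $gU_i$ with respect to $C_0$, using $s_v$ (or $s_{\tilde v}$) as a base ray: any loop in $gU_i$ based at $v$ is pushed along $s_v$ into $Y - F$ by a homotopy in $Y - C_0$. Combined with the bijection $\mathcal{M}_{(g,i)}$ of Lemma \ref{JendsUnify} and the fact that $r', \tilde r'$ lie in $Q$ and converge to a common end, $s_v$ and $s_{\tilde v}$ correspond to the same end of $gU_i \cap (J \cdot E)$; thus for large $D$ the points $s_v(D_v)$ and $s_{\tilde v}(D_{\tilde v})$ can be joined by a path $\eta$ in $gU_i$ avoiding any prescribed compact set. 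I would extend the middle $\mu := s_v|_{[0,d_v]}^{-1} \cdot \beta \cdot s_{\tilde v}|_{[0,d_{\tilde v}]}$ of $L'$ by the tails of $s_v, s_{\tilde v}$ and the bridge $\eta$ into a loop in $gU_i$, push it far out in $Y - C_0$ via co-semistability, and then match the pushed-out piece against $\psi^{-1}$ inside the $m_{(g,i)}(Q)$-region, which avoids $C_0$. The delicate task is to synchronize these fittings — the two ladder rectangles, the co-semistability push, and the $Q$-bridge $\psi$ — so that they assemble into a single disk in $Y - C_0$ bounded by $L'$; it is precisely the hypothesis $x', \tilde x' \in Q$ (same component) that makes this assembly possible.
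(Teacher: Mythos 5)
Your overall architecture (ladder homotopies from Lemma \ref{step2} at both ends, plus a co-semistability push of the middle) matches the paper's, but there is a genuine gap at the final step, and it is caused by a choice you make at the very beginning: you fix $\psi$ in advance as $m_{(g,i)}(\psi')$ for a path $\psi'$ in $Q-m_{(g,i)}^{-1}(F)$, and then must show that the loop $L'$ built from \emph{that particular} $\psi$ bounds in $Y-C_0$. After the co-semistability push you are left with a far-out loop $\phi$ (wherever the homotopy $H$ of the co-semistability definition happens to deposit it) and you need $\phi$, the tails of $s_v,s_{\tilde v}$, the bridge $\eta$, the rungs, and your prescribed $\psi$ to cobound in $Y-C_0$. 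Nothing guarantees this: two paths in $Y-F$ with the same endpoints need not be homotopic rel endpoints in $Y-C_0$ (indeed, if they always were, semistability would be nearly automatic), and the pushed-out loop $\phi$ has no reason to lie in, or be comparable inside, the region $m_{(g,i)}(Q)$. The phrase ``match the pushed-out piece against $\psi^{-1}$'' is exactly the unproved step. The repair is to notice that the lemma only asserts the \emph{existence} of some $\psi$ in $Y-F$: run your assembly first and then \emph{define} $\psi$ to be the far boundary of the assembled homotopy, namely (up to reparametrization) $\gamma_d^{-1}\cdot s_v|_{[k,D_v]}^{-1}\cdot\phi\cdot s_v|_{[k,D_v]}\cdot\eta\cdot\tilde\gamma_d$, each piece of which lies in $Y-F$ for $d$ large. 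With that change your argument closes up.

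Two smaller comparisons with the paper's proof. First, the paper avoids your appeal to the end-bijection of Lemma \ref{JendsUnify} and the far-out bridge $\eta$ altogether: it sets $\tilde r'=(\tau',r')$ where $\tau'$ is a path in $Q$ from $\tilde x'$ to $x'$, so the two rays literally merge, and it closes the middle loop near the bottom by a path $\delta$ in $gU_i$ from $\tilde v$ to $v$ obtained from a ladder homotopy along $\tau=m_{(g,i)}(\tau')$ (this is where the ``same component $Q$'' hypothesis is really used). The loop $(\beta,\delta)$ is then pushed along a single ray $s$ by co-semistability. This is simpler and sidesteps the bounded-tracking verification implicit in your claim that $s_v$ and $s_{\tilde v}$ determine the same end of $gU_i$ (a claim which is correct, but whose proof essentially reproduces part of Lemma \ref{last}). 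Second, your use of co-semistability with base ray $s_v$ is legitimate: Lemma \ref{step2}(2) records precisely that $s_v$ is a proper $J$-bounded ray in $gU_i$, so the hypothesis applies to it.
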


\begin{proof}
Let $r ^{\prime}$ be any proper edge path in $Q$ with initial point $x
^{\prime}$. Let $\tau^{\prime}=(e ^{\prime},\ldots, e_{k}^{\prime})$ be an
edge path in $Q$ from $\tilde x^{\prime}$ to $x ^{\prime}$ with consecutive
vertices $(\tilde x ^{\prime}=t_{0}^{\prime},t ^{\prime},\ldots, t_{k}%
^{\prime}=x^{\prime})$. Let $\tilde r^{\prime}=(\tau^{\prime},r ^{\prime}) $.
Let $t_{j}=m_{(g,i)}(t_{j}^{\prime})$ for all $j\in\{0,1,\ldots, k\}$,
$r=m_{(g,i)}(r^{\prime})$, $\tilde r=m_{(g,i)}(\tilde r^{\prime})$ and
$\tau=m_{(g,i)}(\tau^{\prime})$ (an edge path from $\tilde x$ to $x$ with
image in $Y-St^{M_{\ref{step2}}}(C)$).

By Lemma \ref{step1} and the definition of $M_{ \ref{step2}}$, there is an
edge path $\delta$ in $gU_{i}$ from $\tilde v$ to $v $ such that the loop
$([\tilde v,\tilde w],\tilde\gamma^{-1}, \tau, \gamma,[w ,v ],\delta^{-1})$ is
homotopically trivial by a ladder homotopy $H_{1}$ (with rungs connecting the
two sides $\tau$ and $\delta$ and) with image in $St^{M_{\ref{step2}}}%
(\{t_{0},t ,\ldots, t_{k}\})\subset Y-C$.

By Lemma \ref{step2}, there is a proper edge path ray $s$ at $v$ and with
image in $gU_{i}$ such that $r$ is properly homotopic $rel\{x\}$ to
$(\gamma,[w,v],s)$ by a ladder homotopy $H_{2}$ in $Y-C$. Since $J$ is
co-semistable at $\infty$ in $Y$ with respect to $C_{0}$ and $C$ (and $s$ is
$J$-bounded), the loop $(\beta,\delta)$ can be pushed along $s$ by a homotopy
$H_{3}$ (with image in $Y-C_{0}$) to a loop $\phi$ in $Y-F$, where if $\phi$
is based at $s(k)$, then $s([k,\infty))$ avoids $F$.

\vspace{.5in} \vbox to 2in{\vspace {-2in} \hspace {-.5in}
\hspace{-.5 in}
\includegraphics[scale=1]{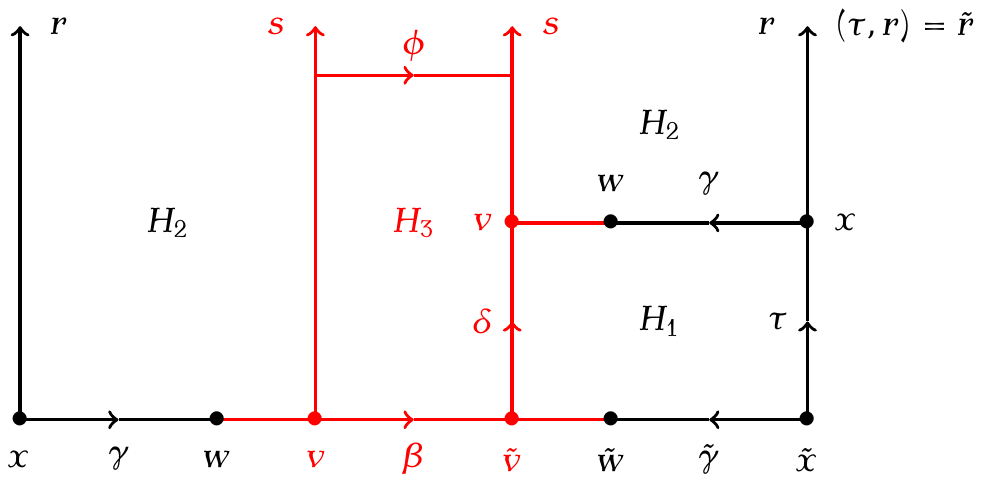}
\vss }

\vspace{-.3in}

\centerline{Figure 4}

\medskip

\medskip

Combine these homotopies
as in Figure 4 to obtain $\psi$.
\end{proof}

If $U$ is a $J$-unbounded component of $Y-J\cdot C$, and $s$ and $\tilde s$
are proper edge path rays in $Y$ and with image in $U$, then we say $s$ and
$\tilde s$ \textit{converge to the same end of $U$} (in $Y$) if for any
compact set $F$ in $Y$, there are edge paths in $U-F$ connecting $s$ and
$\tilde s$. Figure 6 can serve as a visual aid for Lemma \ref{last}.

\begin{lemma}
\label{last} There is a compact set $D_{\ref{last}}(C, U_{1},\ldots, U_{l})$
such that:

If $g\in J$, $i\in\{1,\ldots, l\}$, and $([w,v],\beta, [\tilde v,\tilde w])$
is an edge path in $Y-D_{\ref{last}}$ with $w,\tilde w\in J\cdot C$ and
$\beta$ a path in $gU_{i}$, then there are edge paths $\gamma$ and
$\tilde\gamma$ of length $\leq N_{\ref{step1}}$ from $x=gx^{\prime}\ast\in
gS_{i}\ast$ to $w$ and $\tilde x=g\tilde x^{\prime}\ast\in gS_{i}\ast$ to
$\tilde w$ respectively, and proper edge path rays $r^{\prime}$ at $x^{\prime
}$ and $\tilde r^{\prime}$ at $\tilde x^{\prime}$ with image in $\Lambda
(S_{i},S_{i}^{0})-m_{(g,i)}^{-1}(D_{\ref{step2}})$ such that for $r=
m_{(g,i)}(r^{\prime})$ and $\tilde r= m_{(g,i)}(\tilde r^{\prime})$, one
of the following two statements is true:

\begin{enumerate}
\item For any compact set $F$ in $Y$, there is an integer $d\in[0,\infty)$ and
edge path $\psi$ in $Y-F$ from $r(d)$ to $\tilde r(d)$ such that the loop
\[
(r |_{[0,d]}^{-1}, \gamma,[ w ,v ], \beta, [\tilde v ,\tilde w ], \tilde
\gamma^{-1}, \tilde r |_{[0,d]},\psi^{-1})
\]
is homotopically trivial by a homotopy in $Y-C_{0}$.

\item There are proper $J$-bounded edge path rays $s$ at $v$ and $\tilde s$ at
$\tilde v$ with image in $gU_{i}$ such that, the ray $s$ (respectively $\tilde
s$) is properly homotopic $rel\{v\}$ to $([v,w],\gamma^{-1}, r)$ (respectively
$rel\{\tilde v\}$ to $([\tilde v,\tilde w],\tilde\gamma^{-1}, \tilde r)$ by a
(ladder) homotopy in $Y-C$ (just as in Lemma \ref{step2}), and $s$ and $\tilde
s$ converge to the same end of $g U_{i}$.
\end{enumerate}
\end{lemma}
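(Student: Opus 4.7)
The plan splits by whether the coset $gS_i$ is \emph{good} (meaning $(gS_i\ast) \cap St^{M_{\ref{step2}}}(C) = \emptyset$) or \emph{bad}. For good cosets the preimage $m_{(g,i)}^{-1}(St^{M_{\ref{step2}}}(C))$ is empty, so condition (4) of Lemma \ref{far} is vacuous: choosing $r^{\prime}, \tilde r^{\prime}$ to be any proper rays at $x^{\prime}, \tilde x^{\prime}$ in $\Lambda_i - m_{(g,i)}^{-1}(D_{\ref{step2}})$, Lemma \ref{far} delivers conclusion (1) directly. Good cosets therefore contribute nothing to $D_{\ref{last}}$ beyond $D_{\ref{step2}}$.

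Only finitely many cosets $gS_i$ are bad, because $St^{M_{\ref{step2}}}(C)$ is compact and the orbits $gS_i\ast$ are discrete. For each bad coset I would first enlarge the compact set $E^0_{(g,i)} := m_{(g,i)}^{-1}(D_{\ref{step2}}) \subset \Lambda_i$ to a finite connected subcomplex $E_{(g,i)}$ that absorbs every bounded component of $\Lambda_i - E^0_{(g,i)}$; the finitely many unbounded components $Q_1, \ldots, Q_k$ of $\Lambda_i - E_{(g,i)}$ then correspond under the bijection $\mathcal{M}_{(g,i)}$ of Lemma \ref{JendsUnify} to distinct end-classes of $gU_i \cap J\cdot E$. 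Next I would choose a compact $D^{(g,i)} \subset Y$ containing (i) the $N_{\ref{step1}}$-neighborhood of $m_{(g,i)}(E_{(g,i)})$, which forces any vertex $v \in gU_i - D^{(g,i)}$ to have Lemma \ref{step1}'s associated $J$-vertex $x^{\prime}$ lying in some $Q_j$; and (ii) for each pair $j \ne j^{\prime}$, a compact separator whose removal from $gU_i$ disconnects the end-class associated to $Q_j$ from that associated to $Q_{j^{\prime}}$. Finally set $D_{\ref{last}} = D_{\ref{step2}} \cup \bigcup_{(g,i)\text{ bad}} D^{(g,i)}$.

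For the verification step, suppose $([w,v], \beta, [\tilde v, \tilde w])$ lies in $Y - D_{\ref{last}}$ with $\beta \subset gU_i$ and $gS_i$ bad. Lemma \ref{step1} supplies $\gamma, \tilde\gamma, x^{\prime}, \tilde x^{\prime}$; condition (i) above places $x^{\prime} \in Q_j$ and $\tilde x^{\prime} \in Q_{j^{\prime}}$, and condition (ii), combined with the fact that $\beta$ joins $v$ to $\tilde v$ in $gU_i - D^{(g,i)}$, forces $j = j^{\prime}$. Choose proper rays $r^{\prime}, \tilde r^{\prime}$ inside $Q_j$ emanating from $x^{\prime}, \tilde x^{\prime}$ and converging to a common end of $\Lambda_i$ lying in the closure of $Q_j$. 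Lemma \ref{step2} then produces proper $J$-bounded rays $s, \tilde s$ at $v, \tilde v$ with the required ladder homotopies in $Y - C$. Because $r^{\prime}, \tilde r^{\prime}$ determine the same end of $\Lambda_i$, the bijection $\mathcal{M}_{(g,i)}$ puts the tracked $J$-bounded rays in the same end-class of $gU_i$; uniform $I_{\ref{JendsUnify}}$-closeness of $s, \tilde s$ to these tracked rays then shows that for any compact $F \subset Y$ an edge path in $gU_i - F$ connects $s$ and $\tilde s$, giving conclusion (2).

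The main obstacle is step (ii): upgrading Lemma \ref{JendsUnify}'s separation of distinct end-classes of $gU_i \cap J\cdot E$ to a compact set in $Y$ whose removal separates the corresponding components inside \emph{all} of $gU_i$. The path $\beta$ need not track the $J$-bounded shadow, so a separator living only there does not immediately block $\beta$. Since every open cell of $gU_i$ abuts either $J\cdot C$ or a neighboring cell (by the boundary argument used in Lemma \ref{trans}), one expects to thicken a $J$-bounded separator into a compact subset of $Y$ large enough that any edge path in $gU_i$ passing between distinct end-neighborhoods must traverse it; carrying out this thickening rigorously, uniformly over the finitely many bad cosets, is the delicate point of the argument.
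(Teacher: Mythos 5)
Your treatment of the good cosets (those with $gS_{i}\ast\cap St^{M_{\ref{step2}}}(C)=\emptyset$) matches the paper: Lemma \ref{far} applies with its hypothesis (4) vacuous and yields conclusion (1). The gap is exactly where you flag it, in step (ii), but it is not merely a ``delicate point'' to be carried out rigorously --- the compact separator you need does not exist in general. The bijection of Lemma \ref{JendsUnify} identifies ends of $\Lambda(S_{i},S_{i}^{0})$ with ends of $gU_{i}\cap(J\cdot E)$, the $J$-bounded shadow of $gU_{i}$; two distinct such end-classes can perfectly well be joined, for every compact $F\subset Y$, by paths in $gU_{i}-F$ that wander through the $J$-unbounded part of $gU_{i}$ (think of a $2$-ended stabilizer $S_{i}$ whose two directions are connected at infinity in a one-ended $gU_{i}$, as in a half-plane). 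For such a pair no compact subset of $Y$ separates the corresponding end-neighborhoods inside all of $gU_{i}$, so your plan to force $x^{\prime}$ and $\tilde x^{\prime}$ into the same unbounded component $Q_{j}$ cannot be completed, and your argument has no fallback for that case. (Note also that if you \emph{could} force $j=j^{\prime}$, you could simply invoke Lemma \ref{far} for conclusion (1); the whole difficulty of the lemma lives in the case you cannot eliminate.)

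The paper's device for handling this is a dichotomy on pairs $(K_{j},K_{k})$ of unbounded components of $\Lambda_{i}-m_{(g,i)}^{-1}(St^{M_{\ref{step2}}}(C))$: a pair is \emph{separable} if some compact $F_{(j,k)}\subset Y$ blocks all short-tethered connections between them inside $gU_{i}$, and \emph{inseparable} otherwise. The sets $F_{(j,k)}$ for the separable pairs (over the finitely many bad cosets) are added to $D_{\ref{last}}$, so a path $\beta$ in $Y-D_{\ref{last}}$ with $x^{\prime}\in K_{1}$, $\tilde x^{\prime}\in K_{2}$ forces $(K_{1},K_{2})$ to be inseparable. Inseparability then supplies a cofinal sequence of connecting paths $\beta_{j}$ escaping every compact set, and the rays $r^{\prime},\tilde r^{\prime}$ are chosen to pass through the tether points of infinitely many $\beta_{j}$; the $\beta_{j}$ themselves witness that the resulting $s$ and $\tilde s$ converge to the same end of $gU_{i}$, giving conclusion (2). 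In short, where your argument tries to rule out the cross-component case by separation, the paper embraces it and converts the very failure of separation into the data needed for conclusion (2). You would need to import this separable/inseparable dichotomy (or an equivalent mechanism) for your proof to go through.
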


\begin{proof}
We define $D_{\ref{last}}$ to be the union of a finite collection of compact
sets. The first is $D=D_{\ref{step2}}(C)$ (which contains $St^{M_{\ref{step2}%
}}(C)$). If $\Lambda(S_{i},S_{i}^{0})-m_{(g,i)}^{-1}(St^{M_{\ref{step2}}}(C))$
has only one unbounded component (in particular when $m_{g,i}^{-1}%
(St^{M_{\ref{step2}}}(C))=\emptyset$) then conclusion 1) is satisfied (by
Lemma \ref{far}). There are only finitely many pairs $(g,i)$ with $g\in J$ and
$i\in\{1,\ldots,l\}$ such that $\Lambda(S_{i},S_{i}^{0})-m_{g,i}%
^{-1}(St^{M_{\ref{step2}}}(C))$ has more than one unbounded component. List
these pairs as $(g(1),\iota(1)),\ldots, (g(t),\iota(t))$. Now assume that
$gU_{i}=g(q)U_{\iota(q)}$ for some $q\in\{1,\ldots,t\}$. There are finitely
many unbounded components of $\Lambda(S_{i},S_{i}^{0})-m_{(g,i)}%
^{-1}(St^{M_{\ref{step2}}}(C))$. List them as $K_{1},\ldots, K_{a}$. Consider
pairs $(K_{j},K_{k})$ with $j\ne k$.

If for every compact set $F$ in $Y$, there are vertices $y_{j}^{\prime}\in
K_{j}$ and $y_{k}^{\prime}\in K_{k}$, edge paths $\tau_{j}$ and $\tau_{k}$ of
length $\leq N_{\ref{step1}}$ from $m_{(g,i)}(y_{j}^{\prime})$ to $gU_{i}$ and
$m_{(g,i)}(y_{k}^{\prime})$ to $gU_{i}$ respectively, and an edge path in
$gU_{i}-F$ connecting the terminal point of $\tau_{j}$ and the terminal point
of $\tau_{k}$, then we call the pair $(K_{j}, K_{k})$ \textit{inseparable} and
let $F_{(j,k)}=\emptyset$. Otherwise, we call the pair \textit{separable} and
let $F_{(j,k)}$ be the compact subset of $Y$ for which this condition fails.
Let $E_{(g,i)}=\cup_{j\ne k} F_{(j,k)}$. As $gU_{i}=g_{q}U_{\iota(q)}$, define
$E^{q}=E_{(g,i)}$.

We now define $D_{\ref{last}}=D_{\ref{step2}}(C)\cup E^{1}\cup\cdots\cup
E^{t}$. As noted above we need only consider the case where $\beta$ has image
in $g(q)U_{\iota(q)}$ for some $q\in\{1,\ldots, t\}$. Simplifying notation
again let $g=g(q)$ and $U_{i}=U_{\iota(q)}$. Lemma \ref{step1} implies there
are edge paths $\gamma$ and $\tilde\gamma$ of length $\leq N_{\ref{step1}}$
from $x= gx^{\prime}\ast\in gS_{i}\ast$ to $w$ and $\tilde x=
g\tilde x^{\prime}\ast\in gS_{i}\ast$ to $\tilde w$ respectively. Again let
$K_{1},\ldots, K_{a}$ be the unbounded components of $\Lambda(S_{i},S_{i}%
^{0})-m_{(g,i)}^{-1}(St^{M_{\ref{step2}}}(C))$. Assume that $x^{\prime}$
belongs to $K_{1}$. If $\tilde x^{\prime}$ also belongs to $K_{1}$, then
conclusion 1) of our lemma follows directly from Lemma \ref{far}.

So, we may assume $\tilde x^{\prime}$ belongs to $K_{2}\ne K_{1}$. Notice that
the existence of $\beta$ (in $Y-D_{\ref{last}}$) implies that the pair
$(K_{1},K_{2})$ is inseparable. This implies that there is a sequence of pairs
of vertices $(y_{1(j)}^{\prime}, y_{2(j)}^{\prime})$ for $j\in\{1,2,\ldots\}$
with $y_{1(j)}^{\prime}\in K_{1}$, $y_{2(j)}^{\prime}\in K_{2}$ and edge paths
$\tau_{1(j)}$ and $\tau_{2(j)}$ of length $\leq N_{\ref{step1}}$ from
$m_{(g,i)}(y_{1(j)}^{\prime})$ to $gU_{i}$ and $m_{(g,i)}(y_{2(j)}^{\prime})$
to $gU_{i}$ respectively, and an edge path $\beta_{j}$ in $gU_{i}$ from the
terminal point of $\tau_{1(j)}$ to the terminal point of $\tau_{2(j)}$ and
such that only finitely may $\beta_{j}$ intersect any compact set. Pick proper
edge path rays $r^{\prime}$ in $K_{1}$ at $x^{\prime}$ and $\tilde r^{\prime}$
in $K_{2}$ at $\tilde x^{\prime}$ so that for infinitely many pairs
$(y_{1(j)}^{\prime}, y_{2(j)}^{\prime})$, $r^{\prime}$ passes through
$y_{1(j)}^{\prime}$ and $\tilde r^{\prime}$ passes through $y_{2(j)}^{\prime}%
$. Let $r=m_{(g,i)}(r^{\prime})$ and $\tilde r=m_{(g,i)}(r^{\prime})$. Choose
$s$ and $\tilde s$ for $r$ and $\tilde r$ respectively as in Lemma \ref{step2}
where $\gamma$ and $\tilde\gamma$ for $r$ and $\tilde r$ are chosen to be
$\tau_{1(j)}$ and $\tau_{2(j)}$ when ever possible. Lemma \ref{step2} implies
the ray $s$ is properly homotopic $rel\{v\}$ to $([v,w],\gamma_{w}^{-1}, r)$
and $\tilde s$ is properly homotopic $rel\{\tilde v\}$ to $([\tilde v,\tilde
w],\tilde\gamma^{-1}, \tilde r)$ by ladder homotopies in $Y-C$. The paths
$\beta_{j}$ show that $s$ and $\tilde s$ converge to the same end of $g U_{i}%
$, so that conclusion 2) of our lemma is satisfied.
\end{proof}

\begin{lemma}
\label{USS} Suppose $U$ is a $J$-unbounded component of $Y-J\cdot C$, $F$ is
any compact subset of $Y$ and $s_{1}$ and $s_{2}$ are $J$-bounded proper edge
path rays in $U$ determining the same end of $U$, and with $s_{1}(0)=s_{2}%
(0)$, then there is an integer $n$ and a path $\beta$ from the vertex
$s_{1}(n)$ to the vertex $s_{2}(n)$ such that the image of $\beta$ is in $Y-F$
and $(s_{1}|_{[0,n]}, \beta, s_{2}|_{[0,n]}^{-1})$ is homotopically trivial in
$Y-C_{0}$.
\end{lemma}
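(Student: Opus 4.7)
My plan is to combine the same-end convergence of $s_1,s_2$ in $U$ with the co-semistability of $J$ in $U$ (along the ray $s_2$, with respect to $C_0$) to produce both $n$ and $\beta$. The strategy is: build a ``bridge'' path $\delta$ in $U-F$ between points far out on $s_1$ and $s_2$; form an auxiliary loop $L_0$ using $\delta$ and the initial arcs of $s_1,s_2$; push $L_0$ along $s_2$ into $Y-F$ via co-semistability; and finally read off $\beta$ from the resulting data.

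First I would enlarge $F$ so that $F\supseteq C_0$ (this only strengthens the conclusion, and is harmless since $U\subseteq Y-J\cdot C\subseteq Y-C_0$). By properness of $s_1,s_2$, choose $n_0$ with $s_1([n_0,\infty)), s_2([n_0,\infty))\subseteq Y-F$. Since $s_1,s_2$ converge to the same end of $U$, applying this to the compact set $K=F\cup s_1([0,n_0-1])\cup s_2([0,n_0-1])$ gives an edge path $\delta$ in $U-K\subseteq U-F$ from $s_1(m_1)$ to $s_2(m_2)$; properness of $s_1,s_2$ forces $m_1,m_2\geq n_0$. Form the loop $L_0=s_1|_{[0,m_1]}\cdot\delta\cdot s_2|_{[0,m_2]}^{-1}$ in $U$ based at $v_0=s_1(0)=s_2(0)$.

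Next, apply co-semistability of $J$ in $U$ with respect to $s_2$ and $C_0$ to the loop $L_0$, using the compact set $D=F\cup s_2([0,\max(m_1,m_2)])$. This produces a homotopy $H:[0,1]\times[0,n]\to Y-C_0$ with $H(\cdot,0)=L_0$, sides tracking $s_2$, and top $\phi:=H(\cdot,n)\subseteq Y-D$ a loop at $s_2(n)$; properness of $s_2$ together with $s_2(n)\notin s_2([0,\max(m_1,m_2)])$ yields $n>\max(m_1,m_2)$. Define
\[
\beta:=s_1|_{[m_1,n]}^{-1}\cdot\delta\cdot s_2|_{[m_2,n]}\cdot\phi^{-1},
\]
a path from $s_1(n)$ to $s_2(n)$. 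Each factor lies in $Y-F$: the $s_j$-segments by the choice of $n_0$, $\delta$ by construction, and $\phi$ because $D\supseteq F$.

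To verify that $s_1|_{[0,n]}\cdot\beta\cdot s_2|_{[0,n]}^{-1}$ is null-homotopic in $Y-C_0$, I would write $s_1|_{[0,n]}=s_1|_{[0,m_1]}\cdot s_1|_{[m_1,n]}$ and cancel the resulting adjacent $s_1|_{[m_1,n]}\cdot s_1|_{[m_1,n]}^{-1}$ through its own image $s_1([m_1,n])\subseteq Y-F\subseteq Y-C_0$; separately, inside $L_0\cdot s_2|_{[0,n]}$, write $s_2|_{[0,n]}=s_2|_{[0,m_2]}\cdot s_2|_{[m_2,n]}$ and cancel the resulting $s_2|_{[0,m_2]}^{-1}\cdot s_2|_{[0,m_2]}$ through $s_2([0,m_2])\subseteq U\subseteq Y-C_0$. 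Both cancellations identify $s_1|_{[0,n]}\cdot\beta\cdot s_2|_{[0,n]}^{-1}$ with $L_0\cdot s_2|_{[0,n]}\cdot\phi^{-1}\cdot s_2|_{[0,n]}^{-1}$ up to homotopy in $Y-C_0$, and this last loop is the boundary of the disk image of $H$, hence null-homotopic in $Y-C_0$. The main point requiring care is ensuring every intermediate back-and-forth cancellation stays inside $Y-C_0$; this is handled uniformly by the inclusions $U\subseteq Y-C_0$ and $s_j([n_0,\infty))\subseteq Y-F\subseteq Y-C_0$.
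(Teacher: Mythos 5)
Your proof is correct and follows essentially the same route as the paper: use the same-end hypothesis to obtain a bridge path in $U-F$, apply co-semistability to push the resulting loop in $U$ out past $F$ along a ray, and assemble $\beta$ from the bridge, the pushed-out loop, and segments of the rays. The only (harmless) differences are bookkeeping: the paper bases the loop at $s_1(n)$ and pushes along the tail $s_1|_{[n,\infty)}$, while you base it at $s_2(0)$ and push along all of $s_2$, and you track the bridge endpoints $m_1,m_2$ more carefully.
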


\begin{proof}
Choose an integer $n$ such that $s_{1}([n,\infty))$ and $s_{2}([n,\infty))$
avoid $F$. Since $s_{1}$ and $s_{2}$ determine the same end of $U$, there is
an edge path $\alpha$ in $U-F$ from $s_{1}(n)$ to $s_{2}(n)$. Consider the
loop $(s_{1}|_{[0,n]}^{-1},s_{2}|_{[0,n]},\alpha^{-1})$ based at
$s_{1}|_{[n,\infty)}$. By co-semistability, there is a homotopy $H:[0,1]\times
\lbrack0,l]\rightarrow Y-C_{0}$ (see Figure 5) such that
\[
H(0,t)=H(1,t)=s_{1}(n+t)\hbox{ for }t\in\lbrack0,l],\ H(t,l)\in
Y-F\hbox{ for }t\in\lbrack0,1]\hbox{ and}
\]%
\[
H|_{[0,1]\times\{0\}}=(s_{1}|_{[0,n]}^{-1},s_{2}|_{[0,n]},\alpha^{-1})
\]

\vspace{.5in} \vbox to 2in{\vspace {-2in} \hspace {-1in}
\hspace{-.6 in}
\includegraphics[scale=1]{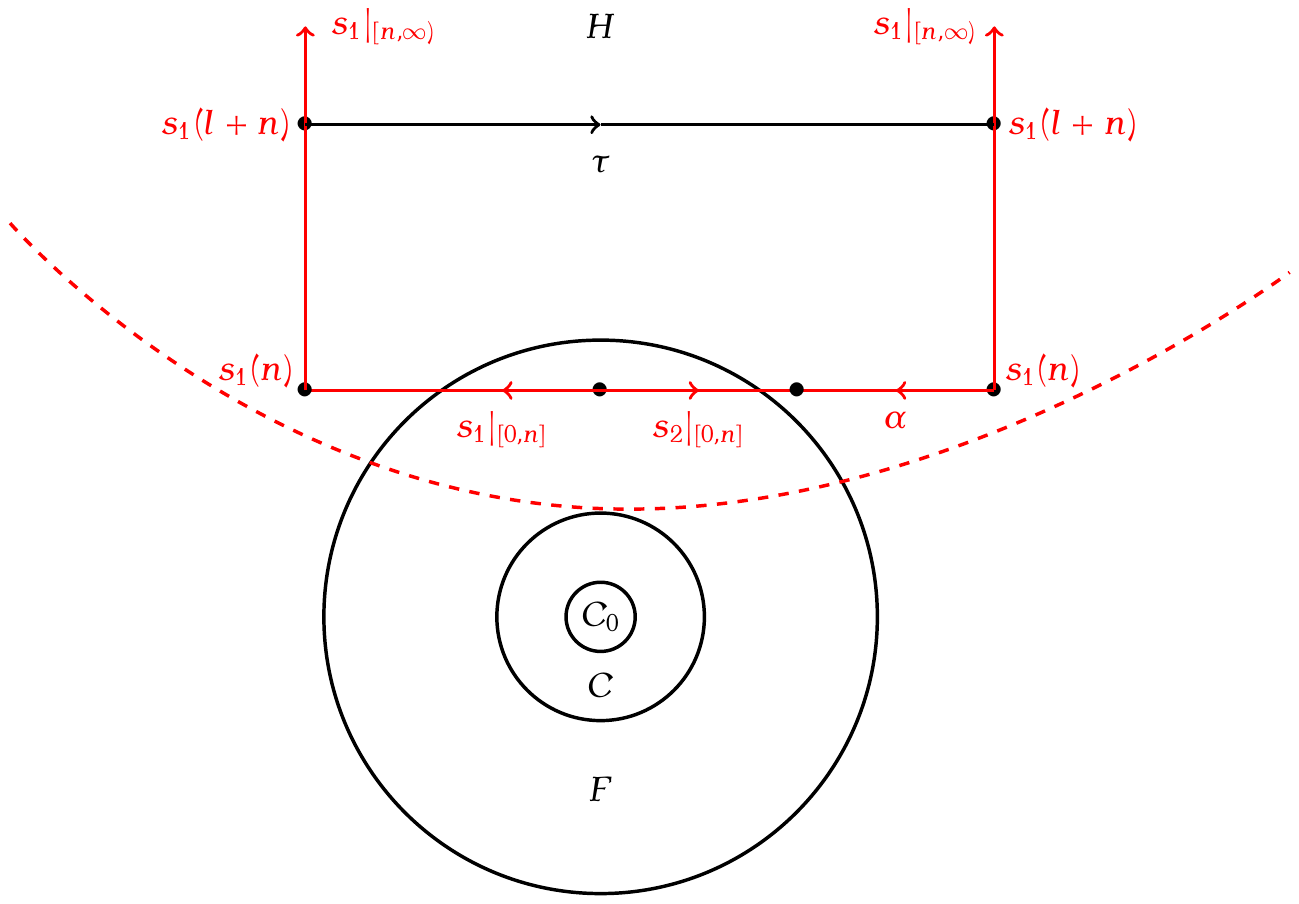}
\vss }

\vspace{1.5in}

\centerline{Figure 5}

\medskip

\medskip

Define $\tau(t)=H(t,l)$ for $t\in[0,1]$ (so that $\tau(0)=\tau(1)=s_{1}%
(l+n))$. Now define
\[
\beta=(s_{1}|_{[n,n+l]}, \tau, s_{1}|_{[n,n+l]}^{-1}, \alpha)
\]
to finish the proof.
\end{proof}

\begin{lemma}
\label{easy} Suppose $r_{1}^{\prime}$ and $r_{2}^{\prime}$ are proper edge
path rays in $\Lambda(J,J^{0})$ such that $m_{(g,i)}(r_{1}^{\prime})=
r_{1}$ and $m_{(g,i)}(r_{2}^{\prime})= r_{2}$ have image in $Y-C$. There
is a compact set $D_{\ref{easy}}(C)$ in $Y$ such that: if $\alpha$ is an edge
path in $(J\cdot C)\cap(Y-D_{\ref{easy}})$ from $r_{1}(0)$ to $r_{2}(0)$ and
$F$ is any compact set in $Y$, then there is an edge path $\psi$ in $Y-F$ from
$r_{1}$ to $r_{2}$ such that the loop determined by $\psi$, $\alpha$ and the
initial segments of $r_{1}$ and $r_{2}$ is homotopically trivial in $Y-C_{0}$.
\end{lemma}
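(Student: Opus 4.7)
The plan is to reduce this to a direct application of the semistability hypothesis for $J$ in $Y$ with respect to $J^0$, $C_0$, and $C$ (as formulated in Section \ref{coax}): given two proper edge path rays in $\Lambda(J,J^0)$ sharing a common initial vertex, whose $z$-images lie in $Y-C$, and any compact $F$, there is a path in $Y-F$ joining the two $z$-images whose closing loop with the initial segments of the $z$-images is null-homotopic in $Y-C_0$. The main obstacle is twofold: the rays $r_1,r_2$ do not come from $\Lambda$-rays sharing a common initial vertex, and they are joined instead by the path $\alpha$ in $J\cdot C$ whose length is a priori unbounded. I will absorb $\alpha$ into both a $\Lambda$-tracking path and a uniformly controlled filler homotopy, thereby reducing to the common-vertex case.

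Since $\alpha$ has image in $J\cdot C$ and $C$ is a finite subcomplex, there is a constant $A=A(C)$ such that every vertex of $\alpha$ lies within edge-path distance $A$ of $J\ast$. Walking along $\alpha$ and selecting nearest $J$-vertices produces a $\Lambda(J,J^0)$-edge path $\alpha^{\ast}$ from some vertex $u_1$ to some vertex $u_2$ whose image $z(\alpha^{\ast})$ uniformly tracks $\alpha$; short bridges $\sigma_1,\sigma_2$ of length $\leq A$ join $r_j(0)$ to $z(u_j)$. The concatenation of $\alpha$, $\sigma_1$, $z(\alpha^{\ast})^{-1}$, and $\sigma_2^{-1}$ decomposes into a chain of sub-loops, each of length bounded purely in terms of $C$, $J^0$, and $K$, and each containing a vertex of $J\ast$. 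Lemma \ref{kill} then supplies an integer $M=M(C)$ such that every such sub-loop is null-homotopic in $St^M$ of any of its vertices. Define $D_{\ref{easy}}$ to be a finite subcomplex containing $C$, containing $St^M(C_0)$, and large enough that any point of $Y-D_{\ref{easy}}$ has edge-path distance at least $A+K$ from $C$; the first condition ensures the filler homotopy lies in $Y-C_0$ whenever $\alpha\subseteq Y-D_{\ref{easy}}$, and the second guarantees $z(\alpha^{\ast})\subseteq Y-C$.

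Finally, prepend a short $\Lambda(J,J^0)$-correction to $r_1'$ to obtain a ray $\hat{r}_1'$ based at $u_1$, and prepend $\alpha^{\ast}$ together with a short correction to $r_2'$ to obtain a ray $\hat{r}_2'$ also based at $u_1$. Both $z$-images remain in $Y-C$. Apply the semistability of $J$ in $Y$ to the pair $(\hat{r}_1',\hat{r}_2')$ with third compact set an enlargement $F'\supseteq F$ absorbing the $z$-images of the prepended pieces; this yields a path $\delta$ in $Y-F'$ joining $z(\hat{r}_1')$ and $z(\hat{r}_2')$ whose closing loop is null-homotopic in $Y-C_0$. Splicing $\delta$ with the filler null-homotopy of the previous paragraph and the bridges $\sigma_j$ produces the required $\psi$ in $Y-F$ from $r_1$ to $r_2$ and a null-homotopy in $Y-C_0$. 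The essential difficulty lies in the order of quantifiers: $D_{\ref{easy}}$ must depend only on $C$, in advance of $\alpha$ and $F$; this is possible precisely because the decomposition of the filler loop yields sub-loops of uniformly bounded length, so a single $M$ from Lemma \ref{kill} suffices.
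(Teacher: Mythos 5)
Your argument is essentially the paper's own proof: both replace $\alpha$ by a nearby $\Lambda(J,J^{0})$-path via nearest $J$-vertex projections, use Lemma \ref{kill} to ladder-homotope $\alpha$ to the $z$-image of that path by a homotopy missing $C_{0}$ (which is exactly what forces $D_{\ref{easy}}$ to be a star neighborhood determined by $C$ alone, settling the quantifier issue you flag), prepend the $\Lambda$-path to $r_{2}'$ to reduce to the common-basepoint form of the semistability hypothesis, and splice after enlarging $F$. The only cosmetic differences are that the paper takes $D_{\ref{easy}}=St^{M_{\ref{easy}}}(C)$ in place of your two separate conditions (and your margin ``$A+K$'' should really absorb the full length of the $z$-images of the connecting $\Lambda$-segments, a constant-chasing point only), and that your bridges $\sigma_{j}$ may be taken trivial since $r_{j}(0)=z(r_{j}'(0))$ already lies in $J\ast$.
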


\begin{proof}
There is an integer $N_{\ref{easy}}(C)$ such that for each vertex $v$ of $C$
there is an edge path in $Y$ from $v$ to $\ast$ of length $\leq N_{\ref{easy}%
}$. Then for each vertex $v$ of $J\cdot C$ there is an edge path of length
$\leq N_{\ref{easy}}$ from $v$ to $J\ast$. Choose an integer $P$ such that if
$v^{\prime}$ and $w^{\prime}$ are vertices of $\Lambda(J,J^{0})$ and
$z(v^{\prime})= v$ and $z(w^{\prime})= w$ are connected by an edge
path of length $\leq2N_{\ref{easy}}+1$ in $Y$ then $v^{\prime}$ and
$w^{\prime}$ are connected by an edge path of length $\leq P$ in
$\Lambda(J,J^{0})$. Recall that if $e$ is an edge of $\Lambda(J,J^{0})$ then
$z(e)$ is an edge path of length $\leq K$. By Lemma \ref{kill} there is an
integer $M_{\ref{easy}}$ such that any loop containing a vertex of $J\ast$ and
of length $\leq KP+ 2N_{\ref{easy}} + 1$ is homotopically trivial in
$St^{M_{\ref{easy}}}(v)$ for any vertex $v$ of this loop.

Let $D_{\ref{easy}}=St^{M_{\ref{easy}}}(C)$. Write $\alpha$ as the edge path
$(e_{1},\ldots, e_{p})$ with consecutive vertices $v_{0},v_{1},\ldots, v_{p}$.
Let $\beta_{0}$ and $\beta_{p}$ be trivial and for $i\in\{1,\ldots, p-1\}$ let
$\beta_{i}$ be an edge path of length $\leq N_{\ref{easy}}$ from $v_{i}$ to
some vertex $g_{i}\ast$ for $g_{i}\in J$. Let $g_{0}=r_{1}^{\prime}(0)$ and
$g_{p}=r_{2}^{\prime}(0)$ (so $g_{0}\ast=v_{0}$ and $g_{p}\ast=v_{p}$). For
$i\in\{0,\ldots, p-1\}$, there is an edge path $\tau_{i}^{\prime}$ in
$\Lambda(J,J^{0})$ from $g_{i-1}$ to $g_{i}$ of length $\leq P$. Let $\tau
_{i}= z(\tau_{i}^{\prime})$ (an edge path of length $\leq PK$. Then the
loop $(\beta_{i},\tau_{i+2},\beta_{i+1}^{-1},e_{i}^{-1})$ has length $\leq
KP+2N_{\ref{easy}}+1$ and so is homotopically trivial in $St^{M_{\ref{easy}}%
}(v)$ for any vertex $v$ of the loop. Let $\tau^{\prime}=(\tau_{1}^{\prime
},\ldots, \tau_{p}^{\prime})$, then $\alpha$ is homotopic $rel\{v_{0},v_{p}\}$
to $z(\tau^{\prime})=\tau$ by a (ladder) homotopy in $Y-C$. Since $J$ is
semistable at $\infty$ in $Y$ with respect to $J^{0}$, $C_{0}$ and $C$, there
is an edge path $\psi$ in $Y-F$ from $r_{1}$ to $(\tau,r_{2})$ such that the
loop determined by $\psi$, $\tau$ and the initial segments of $r_{1}$ and
$r_{2}$ is homotopically trivial in $Y-C_{0}$. Now combine this homotopy with
the homotopy of $\alpha$ and $\tau$.
\end{proof}

\begin{proof}
\textbf{(of Theorem \ref{MT})} Let $C_{0}$ be a finite subcomplex of $Y$ and
$J_{0}$ be a finite generating set for an infinite finitely generated group
$J$, where $J$ acts as cell preserving covering transformations on $Y$, $J$ is
semistable at $\infty$ in $Y$ with respect to $J_{0}$, $C_{0}$ and $C$ (a
finite subcomplex of $Y$) and $J$ is co-semistable at $\infty$ in $Y$ with
respect to $C_{0}$ and $C$. Also assume that $Y-J\cdot C$ is a union of
$J$-unbounded components. Let $U_{1},\ldots,U_{l}$ be $J$-unbounded components
of $Y-J\cdot C$ forming a component transversal for $Y-J\cdot C$ and let
$S_{i}$ be the $J$-stabilizer of $U_{i}$ for $i\in\{1,\ldots,l\}$. Let
$N_{\ref{step1}}$ be defined for $C$ and $U_{1},\ldots,U_{l}$ as in Lemma
\ref{step1}. Let $r_{0}^{\prime}$ be a proper edge path ray in $\Lambda
(J,J^{0})$ at $1$ and $r_{0}= zr_{0}^{\prime}$.

\vspace{.5in} \vbox to 2in{\vspace {-2in} \hspace {-1in}
\hspace{-.8 in}
\includegraphics[scale=1]{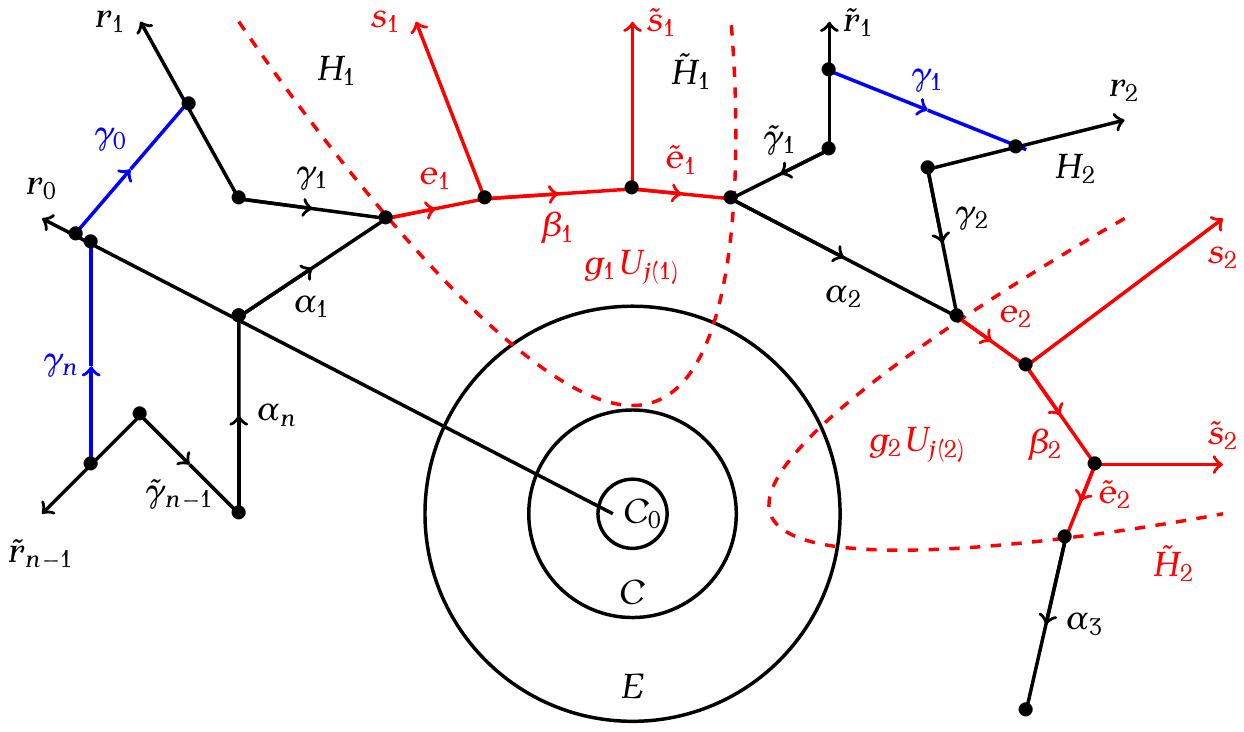}
\vss }

\vspace{.75in}

\centerline{Figure 6}

\medskip

\medskip

Let $E$ be compact containing $St^{N_{\ref{step1}}}(D_{\ref{easy}})\cup
D_{\ref{last}}(C,U_{1},\ldots, U_{l})$ and such that once $r_{0}$ leaves $E$
it never returns to $D_{\ref{last}}(C)$. Suppose $\alpha$ is an edge path loop
based on $r_{0}$ with image in $Y-E$ (see Figure 6). Let $F$ be any compact subset of $Y$.
Our goal is to find a proper homotopy $H:[0,1]\times[0,1]\to Y-C_{0}$ such
that $H(0,t)=H(1,t)$ is a subpath of $r_{0}$, $H(t,0)=\alpha$ and $H(t,1)$ has
image in $Y-F$ (so that $Y$ has semistable fundamental group at $\infty$ by
Theorem \ref{ssequiv} part 2).) Write $\alpha$ as:
\[
\alpha= (\alpha_{1}, e_{1},\beta_{1}, \tilde e_{1},\alpha_{2},e_{2}\beta
_{2},\tilde e_{2}\ldots, \alpha_{n-1}, e_{n-1} ,\beta_{n-1}, \tilde e_{n-1},
\alpha_{n})
\]
where $\alpha_{i}$ is an edge path in $J\cdot C$, $e_{i}$ (respectively
$\tilde e_{i}$) is an edge with terminal (respectively initial) vertex in
$Y-J\cdot C$ and $\beta_{i}$ is an edge path in the $J$-unbounded component
$g_{i}U_{f(i)}$ of $Y-J\cdot C$ where $f(i)\in\{1,\ldots, l\}$.

By Lemmas \ref{step1} and \ref{step2} and the definition of $D_{\ref{last}%
}(C)$, there is an edge path $\gamma_{i}$ of length $\leq N_{\ref{step1}}$,
from a vertex $x_{i}= gx_{i}^{\prime}\ast$ of $g_{i}S_{f(i)}\ast$ to the
initial vertex of $e_{i}$, and there are proper edge path rays $r_{i}^{\prime
}$ at $x_{i}^{\prime}$ in $\Lambda(S_{f(i)}, S_{f(i)}^{\prime})$ and $s_{i}$
at the end point of $e_{i}$ such that $s_{i}$ has image in $g_{i}U_{f(i)}$ and
$r_{i}$ is properly homotopic to $(\gamma_{i}, e_{i}, s_{i})$ (where
$r_{i}= m_{(g,f(i))}(r_{i}^{\prime})$), by a proper (ladder) homotopy
$H_{i}$ with image in $Y-C$. Similarly there is an edge path $\tilde\gamma
_{i}$ of length $\leq N_{\ref{step1}}$ from $\tilde x_{i}$, a vertex of
$g_{i}S_{j(i)}\ast$, to the terminal vertex of $\tilde e_{i}$, and there are
$J$-bounded proper edge path rays $\tilde r_{i}$ at $\tilde\gamma_{j}(0)$ and
$\tilde s_{i}$ at the initial point of $\tilde e_{i}$, such that $\tilde
r_{i}= m_{(g_{i},f(i))}(\tilde r_{i}^{\prime})$ for some proper ray
$\tilde r_{i}^{\prime}$ in $\Lambda(S_{f(i)}, S_{f(i)}^{\prime})$, $\tilde
s_{i}$ has image in $g_{i}U_{f(i)} $ and $\tilde s_{i}$ is properly homotopic
to $(\tilde e_{i}, {\tilde\gamma_{i}}^{-1}, \tilde r_{i})$ by a proper
(ladder) homotopy $\tilde H_{i}$ with image in $Y-C$. In particular, the
$r_{i}$, and $\tilde r_{i}$-rays have image in $Y-C$.

By Lemma \ref{last}, either $r_{i}$ is properly homotopic $rel \{r_{i}(0)\}$
to the ray $(\gamma_{i},e_{i},\beta_{i}, \tilde e_{i}, \tilde\gamma_{i}^{-1},
\tilde r_{i}) $ by a homotopy in $Y-C_{0}$ or the rays $s_{i}$ and $\tilde
s_{i}$ converge to the same end of $g_{i} U_{f(i)}$. In the former case: The
path $(\gamma_{i},e_{i},\beta_{i}, \tilde e_{i}, \tilde\gamma_{i}^{-1})$ can
be moved by a homotopy along $r_{i}$ and $\tilde r_{i}$ to a path outside $F$
where the homotopy has image in $Y-C_{0}$.

In the later case, Lemma \ref{USS} implies there is a there is an integer
$n_{i}$ and edge path $\tilde\beta_{i}$ from $s_{i}(n_{i})$ to $\tilde
s_{i}(n_{i})$ and with image in $Y-F$ such that $\beta_{i}$ can be moved by a
homotopy along $s_{i} $ and $\tilde s_{i}$ to $\tilde\beta_{i}$, such that
this homotopy has image in $Y-C_{0}$. In any case, the (ladder) homotopy
$H_{i}$ (of $r_{i}$ to $(\gamma_{i},e_{i},s_{i})$) tells us that $(\gamma
_{i},e_{i})$ can be moved (by a homotopy in $Y-C_{0}$) along $r_{i}$ and
$s_{i}$ to a path in $Y-F$ and similarly for $(\tilde\gamma_{i},\tilde e_{i})$
using $\tilde H_{i}$. Combining these three homotopies, we have in the latter
case (as in the former):

$\ast$) The path $(\gamma_{i},e_{i},\beta_{i}, \tilde e_{i}, \tilde\gamma
_{i}^{-1})$ can be moved by a homotopy along $r_{i}$ and $\tilde r_{i}$ to a
path outside $F$ by a homotopy with image in $Y-C_{0}$.

For consistent notation, let $\tilde r_{0}=r_{n}$ be the tail of $r_{0}$
beginning at $\alpha_{1}(0)$, and let $\tilde\gamma_{0}$ and $\gamma_{n}$ be
the trivial paths at the initial point of $\alpha_{1}$. It remains to show
that for $0\leq i\leq n$, there is a path $\delta_{i}$ in $Y-F$ from $\tilde
r_{i}$ to $r_{i+1}$ such that the loop determined by $\delta_{i}$, the path
$(\tilde\gamma_{i},\alpha_{i+1}, \gamma_{i+1}^{-1})$, and the initial segments
of $\tilde r_{i} $ and $r_{i+1}$ is homotopically trivial in $Y-C_{0}$. These
homotopies are given by Lemma \ref{easy} since the paths $\gamma_{i}$ and
$\tilde\gamma_{i}$ all have length $\leq N_{\ref{step1}}$ and so by the
definition of $E$ they have image in $Y-D_{\ref{easy}}$ (as do the $\alpha
_{i}$), and since the rays $r_{i}$ and $\tilde r_{i}$ have image in $Y-C$.
\end{proof}

\section{Generalizations to absolute neighborhood retracts}\label{ANR}

There is no need for a space $X$ to be a CW complex in order to define what it
means for a finitely generated group $J$ to be semistable at $\infty$ in $X$
with respect to a compact subset $C_{0}$ of $X$, or for $J$ to be
co-semistable at $\infty$ in $X$ with respect to $C_{0}$.

\begin{corollary}
\label{MC} Suppose $X$ is a 1-ended simply connected locally compact absolute
neighborhood retract (ANR) and $G$ is a group (not necessarily finitely
generated) acting as covering transformations on $X$. Assume that for each
compact subset $C_{0}$ of $X$ there is a finitely generated subgroup $J$ of
$G$ so that (a) $J$ is semistable at $\infty$ in $X$ with respect to $C_{0}$,
and (b) $J$ is co-semistable at $\infty$ in $X$ with respect to $C_{0}$. Then
$X$ has semistable fundamental group at $\infty$.
\end{corollary}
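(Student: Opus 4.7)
The plan is to reduce Corollary \ref{MC} to Theorem \ref{MT} by approximating $X$ with a $G$-equivariantly proper homotopy equivalent locally finite CW complex, and then transferring the two hypotheses across the equivalence.

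First I would construct a 1-ended simply connected locally finite CW complex $Y$ equipped with a cell-preserving $G$-action by covering transformations, together with a $G$-equivariant proper homotopy equivalence $\Phi\colon Y\to X$ with $G$-equivariant proper homotopy inverse $\Psi\colon X\to Y$. Since $G$ acts on $X$ as covering transformations, the quotient $G\backslash X$ is again a locally compact ANR. Standard ANR approximation results (West / Chapman--West type theorems in the locally finite setting) yield a locally finite CW complex proper homotopy equivalent to $G\backslash X$; lifting that CW structure through the covering projection $X\to G\backslash X$ produces the $G$-CW complex $Y$ together with the desired $\Phi$ and $\Psi$. The hypotheses on $X$ force $Y$ to be 1-ended and simply connected.

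Next I would transfer the hypotheses across $\Phi$. Given a compact $C_{0}\subset Y$, choose a compact $C_{0}'\subset X$ containing $\Phi(C_{0})$, together with a slight compact thickening absorbing the proper homotopy tracking between $\mathrm{id}_{X}$ and $\Phi\Psi$. By assumption some finitely generated $J\le G$ is semistable and co-semistable at $\infty$ in $X$ with respect to $C_{0}'$. Because $\Phi$ and $\Psi$ are $J$-equivariant and properly homotopy inverse, sets of the form $J\cdot D$ correspond between $X$ and $Y$ up to a uniformly compact error, $J$-unbounded components and $J$-bounded proper rays correspond, and loops together with their pushing homotopies transfer with only bounded distortion. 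Hence $J$ is semistable and co-semistable at $\infty$ in $Y$ with respect to $C_{0}$. Theorem \ref{MT} then applies to $Y$, showing $Y$ has semistable fundamental group at $\infty$; and since semistability of the fundamental group at $\infty$ is a proper homotopy invariant of 1-ended, locally finite spaces (the defining inverse system of $\pi_{1}$'s of complements of compact sets is preserved up to pro-isomorphism), the conclusion descends to $X$.

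The main obstacle is the first step: producing a genuine cell-preserving $G$-action on $Y$ and an honestly $G$-equivariant $\Phi$, as opposed to an action only up to proper homotopy. Once this equivariant model is in hand, the rest of the argument is a routine transfer of finite-type data across a proper homotopy equivalence. If the full lift proves delicate, one can retreat to the 2-skeleton, since semistability of $\pi_{1}$ at $\infty$ depends only on the 2-type; a 2-dimensional $G$-equivariant simplicial model can be manufactured directly as the nerve of a locally finite $G$-invariant open cover of $X$ with 2-cells attached to trivialize the known $\pi_{1}$-relations, and the remainder of the argument goes through verbatim.
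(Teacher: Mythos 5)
Your proposal follows essentially the same route as the paper: invoke West's theorem to replace $G\backslash X$ by a proper homotopy equivalent locally finite polyhedron, lift to universal covers to obtain a $G$-equivariant proper homotopy equivalence with a cell-preserving $G$-action, transfer the semistability and co-semistability hypotheses across that equivalence, apply Theorem \ref{MT}, and conclude by proper homotopy invariance of semistability at infinity. The equivariance worry you flag is resolved exactly as you suspect (the simplicial structure and the proper homotopy equivalence both lift from the quotient to the universal covers), so the nerve-based fallback is unnecessary.
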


\begin{proof}
By a theorem of J. West \cite{West77} the locally compact ANR $G\backslash X$
is proper homotopy equivalent to a locally finite polyhedron $Y_{1}$. A
simplicial structure on $Y_{1}$ lifts to a simplicial structure structure on
$Y $, its universal cover, and $G$ acts as cell preserving covering
transformations on $Y$. A proper homotopy equivalence from $G\backslash X$ to
$Y_{1}$ lifts to a $G$-equivariant proper homotopy equivalence $h: X\to Y$.
Let $f:Y\to X$ be a ($G$-equivariant) proper homotopy inverse of $h$. Since
the semistability of the fundamental group at $\infty$ of a space is invariant
under proper homotopy equivalence it suffices to show that $Y$ satisfies the
hypothesis of Theorem \ref{MT}.

First we show that if $C_{0}$ is compact in $Y$ then there is a finitely
generated subgroup $J$ of $G$ such that $J$ is semistable at $\infty$ in $Y$
with respect to $C_{0}$. There is a finitely generated subgroup $J$ of $G$,
with finite generating set $J^{0}$ and compact set $C\subset X$ such that $J$
is semistable at $\infty$ with respect to $J^{0}$, $h^{-1}(C_{0})$, $C$ and
$z_{1}$, where $z_{1}:\Lambda(J,J_{0})\to X$ is $J$-equivariant. Note that
$z=hz_{1}$ is $J$-equivariant. Let $r^{\prime}$ and $s^{\prime}$ be proper
edge path rays in $\Lambda$ such that $r^{\prime}(0)=s^{\prime}(0)$ and both
$r=z_{1}(r^{\prime})$ and $s=z_{1}(s^{\prime})$ have image in $X-C$. Then
given any compact set $D$ in $X$ there is path $\delta_{D}$ in $X-D$ from $r$
to $s $ such that the loop determined by $\delta_{D}$ and the initial segments
of $r$ and $s$ is homotopically trivial in $X-h^{-1}(C_{0})$.

Now, let $D$ be compact in $Y$. Suppose that $r^{\prime}$ and $s^{\prime}$ are
proper edge path rays in $\Lambda$ such that $r^{\prime}(0)=s^{\prime}(0) $
and both $r=hz_{1}(r^{\prime})$ and $s=hz_{1}(s^{\prime})$ have image in
$X-h(C)$ (in particular, $z_{1}(r^{\prime})$ and $z_{1}(s^{\prime})$ have
image in $X-C$). Let $\delta$ be a path from $z_{1}(r^{\prime})$ to
$z_{1}(s^{\prime})$ in $X-h^{-1}(D)$ (so that $h(\delta)$ is a path from $r$
to $s$ in $Y-D$) such that the loop determined by $\delta$ and the initial
segments of $z_{1}(r^{\prime})$ and $z_{1}(s^{\prime})$ is homotopically
trivial by a homotopy $H_{0}$ with image in $X-h^{-1}(C_{0})$. Then the loop
determined by $h(\delta)$ and the initial segments of $r$ and $s$ is
homotopically trivial in $Y-C_{0}$ by the homotopy $hH_{0}$.

Finally we show that if $C_{0}$ is compact in $Y$ there is a finitely
generated subgroup $J$ of $G$ such that $J$ is co-semistable at $\infty$ in
$Y$ with respect to $C_{0}$. Consider the compact set $h^{-1}(C_{0})\subset
X$. Choose $C$ compact in $X$ such that $J$ is co-semistable at $\infty$ in
$X$ with respect to $h^{-1}(C_{0})$ and $C$.

Let $H:Y\times[0,1]\to Y$ be a proper homotopy such that $H(y,0)=y$ and
$H(y,1)=hf(y)$ for all $y\in Y$. Let $D_{1}$ be compact in $Y$ so that if $s$
is a proper ray in $Y-D_{1}$ then the proper homotopy of $s$ to $hf(s)$
(induced by $H$) has image in $Y-C_{0}$. Let $D_{2}=D_{1}\cup f^{-1}(C)$. It
suffices to show that if $r$ is a $J$-bounded proper ray in $Y-J\cdot D_{2}$
and $\alpha$ is a loop in $Y-J\cdot D_{2}$ with initial point $r(0)$, then for
any compact set $F$ in $Y$, $\alpha$ can be pushed along $r$ to a loop in
$Y-F$, by a homotopy in $Y-C_{0}$. Define $\tau(t)= H(r(0),t)$) for
$t\in[0,1]$.

Let $H_{1}:[0,\infty)\times[0,1]\to Y-C_{0}$ be the proper homotopy (induced
by $H$) of the proper ray $(\alpha, r)$ to $(hf(\alpha), hf(r))$ so that
$H_{1}(t,0)=(\alpha,r)(t)$, $H_{1}(t,1)=(hf(\alpha), hf(r))(t)$ for
$t\in[0,\infty)$ and $H_{1}(0,t)=\tau(t)$ (see Figure 7). Let $H_{2}%
:[0,\infty)\times[0,1]\to Y-C_{0}$ be the proper homotopy (induced by $H$) of
$r$ to $hf(r)$ so that $H_{2}(t,0)=r(t)$, $H_{2}(t,1)=hf(r)(t)$ for
$t\in[0,\infty)$ and $H_{2}(0,t)=\tau(t)$ for $t\in[0,1]$.

\vspace{.5in} \vbox to 2in{\vspace {-2in} \hspace {-.5in}
\hspace{-.6 in}
\includegraphics[scale=1]{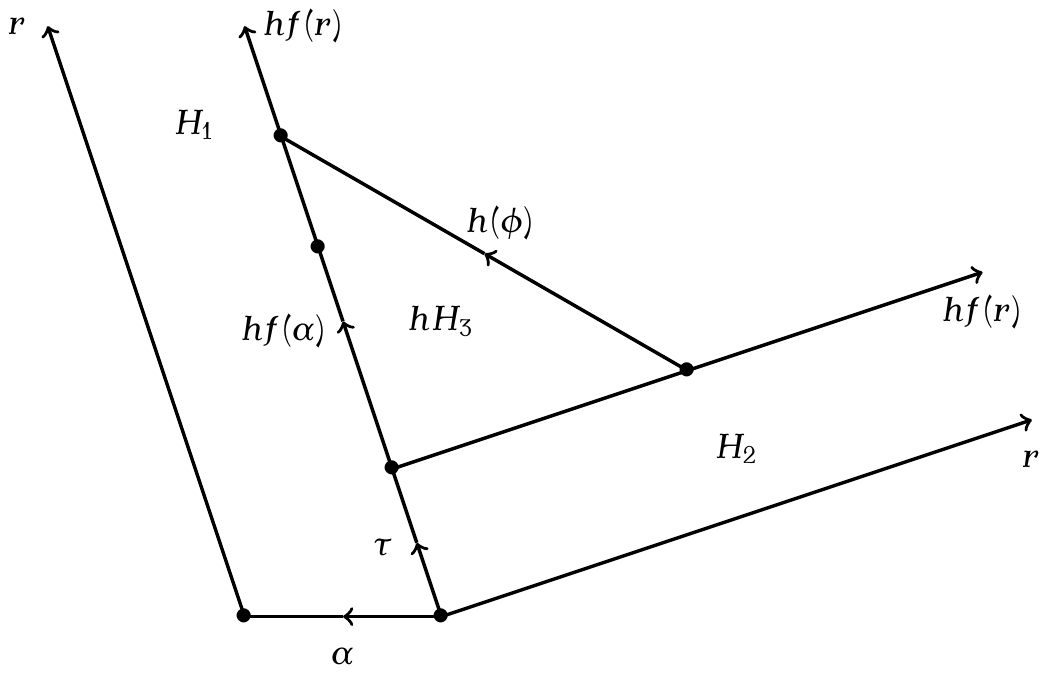}
\vss }

\vspace{.5in}

\centerline{Figure 7}

\medskip

\medskip

Recall that $f$ is $J$-equivariant. Since $r$ and $\alpha$ have image in
$Y-J\cdot D_{2}$ (and $f^{-1}(C)\subset D_{2}$), $f(r)$ and $f(\alpha)$ have
image in $X-J\cdot C$. Also $f(r)$ is $J$-bounded in $X$. There is a homotopy
$H_{3}$ with image in $X-h^{-1}(C_{0})$ that moves $f(\alpha)$ along $f(r)$ to
a loop $\phi$ in $X-h^{-1} (F)$, where if $fr(q)$ is the initial point of
$\phi$ then $fr([q,\infty))\subset X-h^{-1}(F)$. The homotopy $hH_{3}$ has
image in $Y-C_{0}$ and moves $hf(\alpha)$ along $hf(r)$ to the loop $h(\phi)$
in $Y-F$. Combine the homotopies $H_{1}$, $H_{2}$ and $H_{3}$ as in Figure 7
to see that $\alpha$ can be moved along $r$ into $Y-F$ by a homotopy in
$Y-C_{0}$.
\end{proof}

\bibliographystyle{amsalpha}
\bibliography{paper}

\def\cprime{$'$}
\providecommand{\bysame}{\leavevmode\hbox to3em{\hrulefill}\thinspace}
\providecommand{\MR}{\relax\ifhmode\unskip\space\fi MR }
\providecommand{\MRhref}[2]{%
  \href{http://www.ams.org/mathscinet-getitem?mr=#1}{#2}
}
\providecommand{\href}[2]{#2}
\begin{thebibliography}{MT92b}

\bibitem[BM91]{BM91}
Mladen Bestvina and Geoffrey Mess, \emph{The boundary of negatively curved
  groups}, J. Amer. Math. Soc. \textbf{4} (1991), no.~3, 469--481. \MR{1096169}

\bibitem[Bow04]{Bo04}
Brian~H. Bowditch, \emph{Planar groups and the {S}eifert conjecture}, J. Reine
  Angew. Math. \textbf{576} (2004), 11--62. \MR{2099199}

\bibitem[CM14]{CM2}
Gregory~R. Conner and Michael~L. Mihalik, \emph{Commensurated subgroups,
  semistability and simple connectivity at infinity}, Algebr. Geom. Topol.
  \textbf{14} (2014), no.~6, 3509--3532. \MR{3302969}

\bibitem[Geo08]{G}
Ross Geoghegan, \emph{Topological methods in group theory}, Graduate Texts in
  Mathematics, vol. 243, Springer, New York, 2008. \MR{2365352}

\bibitem[GG12]{GG12}
Ross Geoghegan and Craig~R. Guilbault, \emph{Topological properties of spaces
  admitting free group actions}, J. Topol. \textbf{5} (2012), no.~2, 249--275.
  \MR{2928076}

\bibitem[GGM]{GGM16}
Ross Geoghegan, Craig Guilbault, and Michael Mihalik, \emph{Topological
  properties of spaces admitting a coaxial homeomorphism}, Arxiv 1611.01807
  [Math. GR].

\bibitem[GM96]{GM96}
Ross Geoghegan and Michael~L. Mihalik, \emph{The fundamental group at
  infinity}, Topology \textbf{35} (1996), no.~3, 655--669. \MR{1396771}

\bibitem[LR75]{LR}
Ronnie Lee and Frank Raymond, \emph{Manifolds covered by {E}uclidean space},
  Topology \textbf{14} (1975), 49--57. \MR{0365581}

\bibitem[Mih]{M7}
M.~Mihalik, \emph{Bounded depth ascending {HNN} extensions and
  $\pi_1$-semistability at infinity}, Preprint, 2017.

\bibitem[Mih83]{M1}
Michael~L. Mihalik, \emph{Semistability at the end of a group extension},
  Trans. Amer. Math. Soc. \textbf{277} (1983), no.~1, 307--321. \MR{690054}

\bibitem[Mih85]{HNN1}
\bysame, \emph{Ends of groups with the integers as quotient}, J. Pure Appl.
  Algebra \textbf{35} (1985), no.~3, 305--320. \MR{777262}

\bibitem[Mih86]{M86}
\bysame, \emph{Ends of double extension groups}, Topology \textbf{25} (1986),
  no.~1, 45--53. \MR{836723}

\bibitem[Mih87]{M87}
\bysame, \emph{Semistability at {$\infty$}, {$\infty$}-ended groups and group
  cohomology}, Trans. Amer. Math. Soc. \textbf{303} (1987), no.~2, 479--485.
  \MR{902779}

\bibitem[Mih16]{M6}
\bysame, \emph{Semistability and simple connectivity at infinity of a finitely
  generated group with a finite series of commensurated subgroups}, Algebr.
  Geom. Topol. \textbf{16} (2016), no.~6, 3615--3640, MR3584269.

\bibitem[MT92a]{MT92}
Michael~L. Mihalik and Steven~T. Tschantz, \emph{One relator groups are
  semistable at infinity}, Topology \textbf{31} (1992), no.~4, 801--804.
  \MR{1191381}

\bibitem[MT92b]{MT1992}
\bysame, \emph{Semistability of amalgamated products and {HNN}-extensions},
  Mem. Amer. Math. Soc. \textbf{98} (1992), no.~471, vi+86. \MR{1110521}

\bibitem[Wes77]{West77}
James~E. West, \emph{Mapping {H}ilbert cube manifolds to {ANR}'s: a solution of
  a conjecture of {B}orsuk}, Ann. of Math. (2) \textbf{106} (1977), no.~1,
  1--18. \MR{0451247}

\bibitem[Wri92]{W92}
David~G. Wright, \emph{Contractible open manifolds which are not covering
  spaces}, Topology \textbf{31} (1992), no.~2, 281--291. \MR{1167170}

\end{thebibliography}
{}

\end{document}